\newtheorem{theo}{Theorem}[section]
\newtheorem{lemm}[theo]{Lemma}
\newtheorem{cor}[theo]{Corollary}
\theoremstyle{definition}
\newtheorem{defi}[theo]{Definition}
\newtheorem{rem}[theo]{Remark}
\newtheorem{assum}{Assumption}
\newcommand{\bA}{\mathbb{A}}
\newcommand{\bC}{\mathbb{C}}
\newcommand{\bE}{\mathbb{E}}
\newcommand{\bF}{\mathbb{F}}
\newcommand{\bN}{\mathbb{N}}
\newcommand{\bP}{\mathbb{P}}
\newcommand{\bR}{\mathbb{R}}
\newcommand{\bS}{\mathbb{S}}
\newcommand{\bT}{\mathbb{T}}
\newcommand{\bX}{\mathbb{X}}
\newcommand{\cF}{\mathcal{F}}
\newcommand{\cI}{\mathcal{I}}
\newcommand{\cL}{\mathcal{L}}
\newcommand{\cP}{\mathcal{P}}
\newcommand{\cS}{\mathcal{S}}
\newcommand{\cT}{\mathcal{T}}
\newcommand{\cU}{\mathcal{U}}
\newcommand{\cX}{\mathcal{X}}
\newcommand{\cY}{\mathcal{Y}}
\newcommand{\sL}{\mathscr{L}}
\newcommand{\ep}{\varepsilon}
\newcommand{\rd}{\,\mathrm{d}}
\newcommand{\op}{\mathrm{op}}
\newcommand{\sym}{\mathrm{sym}}
\newcommand{\1}{\mbox{\rm{1}}\hspace{-0.25em}\mbox{\rm{l}}}
\newcommand{\lint}{\mathalpha{\ltimes}}
\newcommand{\rint}{\mathalpha{\rtimes}}
\providecommand{\keywords}[1]{\textbf{Keywords:} #1}
\def\widebar{\accentset{{\cc@style\underline{\mskip10mu}}}}
\numberwithin{equation}{section}
\title{Linear-quadratic stochastic Volterra controls I: Causal feedback strategies}
\author{
Yushi Hamaguchi\footnote{Corresponding Author. Graduate School of Engineering Science, Department of Systems Innovation, Osaka University. 1-3, Machikaneyama, Toyonaka, Osaka, Japan (Email: hmgch2950@gmail.com). This author was supported by JSPS KAKENHI Grant Number 22K13958.}
~~ and ~~
Tianxiao Wang\footnote{School of Mathematics, Sichuan University. Chengdu, P.\ R.\ China (Email: wtxiao2014@scu.edu.cn). This author was supported by National Natural Science Foundation of China (No.\ 11971332 and 11931011) and the Science Development Project of Sichuan University under grant 2020SCUNL201.}
}
\begin{document}
\maketitle


\begin{abstract}
In this paper, we formulate and investigate the notion of causal feedback strategies arising in linear-quadratic control problems for stochastic Volterra integral equations (SVIEs) with singular and non-convolution-type coefficients. We show that there exists a unique solution, which we call the causal feedback solution, to the closed-loop system of a controlled SVIE associated with a causal feedback strategy. Furthermore, introducing two novel equations named a Type-II extended backward stochastic Volterra integral equation and a Lyapunov--Volterra equation, we prove a duality principle and a representation formula for a quadratic functional of controlled SVIEs in the framework of causal feedback strategies.
\end{abstract}


\keywords
Linear-quadratic control; stochastic Volterra integral equation; causal feedback strategy.


\textbf{2020 Mathematics Subject Classification}: 60H20; 45A05; 93E20; 93B52.




\section{Introduction}\label{section_Introduction}

Linear-quadratic (LQ) control problems are special classes of optimal control problems described by a linear state dynamics and a quadratic cost functional. In the continuous time setting, the state dynamics is assumed to be governed by a controlled differential/integral equation. In this paper, we study controlled linear stochastic Volterra integral equations (SVIEs) of the following form:
\begin{equation}\label{eq_state}
\begin{split}
	&X(t)=x(t)+\int^t_{t_0}\{A(t,s)X(s)+B(t,s)u(s)+b(t,s)\}\rd s+\int^t_{t_0}\{C(t,s)X(s)+D(t,s)u(s)+\sigma(t,s)\}\rd W(s),\\
	&\hspace{8cm}t\in(t_0,T),
\end{split}
\end{equation}
where $u$ is a control process, $x$ is a given deterministic function called the free term (which is also called the forcing term), $W$ is a Brownian motion, $A,B,C$ and $D$ are matrix-valued deterministic coefficients, and $b$ and $\sigma$ are vector-valued stochastic inhomogeneous terms. The LQ control problem for SVIE \eqref{eq_state}, which we call an \emph{LQ stochastic Volterra control problem}, is a problem to find a control process or a strategy which minimize a quadratic cost functional of both $u$ and $X$.

The controlled SVIE \eqref{eq_state} is a Volterra-type extension of a controlled linear stochastic differential equation (SDE):
\begin{equation}\label{eq_state_SDE}
	\begin{dcases}
	\rd X(s)=\{A(s)X(s)+B(s)u(s)+b(s)\}\rd s+\{C(s)X(s)+D(s)u(s)+\sigma(s)\}\rd W(s),\ s\in[t_0,T],\\
	X(t_0)=x,
	\end{dcases}
\end{equation}
with $x$ being a constant. LQ control problems for SDEs were first studied by Wonham \cite{Wo68} in 1968 and followed by many researchers; see \cite[Chapter 6]{YoZh99} and \cite{SuYo20} for systematic studies and the developments of LQ control theory for SDEs. It is well-known that, in the LQ control problems for SDEs, the optimal control $u$ has the following state-feedback representation:
\begin{equation}\label{eq_feedback_SDE}
	u(t)=\Xi(t)X(t)+v(t),\ t\in[t_0,T],
\end{equation}
where $\Xi$ is a matrix-valued deterministic function, $v$ is a stochastic inhomogeneous term, and $X$ is the optimal state process corresponding to $u$. Therefore, given the strategy $(\Xi,v)$, for each initial condition $(t_0,x)$, we get the following closed-loop system:
\begin{equation}\label{eq_closed-loop_system_SDE}
	\begin{dcases}
	\rd X^{t_0,x}(s)=\{A(s)X^{t_0,x}(s)+B(s)u^{t_0,x}(s)+b(s)\}\rd s\\
	\hspace{3cm}+\{C(s)X^{t_0,x}(s)+D(s)u^{t_0,x}(s)+\sigma(s)\}\rd W(s),\ s\in[t_0,T],\\
	X^{t_0,x}(t_0)=x,\\
	u^{t_0,x}(s)=\Xi(s)X^{t_0,x}(s)+v(s),\ s\in[t_0,T],
	\end{dcases}
\end{equation}
or equivalently the SDE
\begin{equation*}
\begin{dcases}
	\rd X^{t_0,x}(s)=\{(A(s)+B(s)\Xi(s))X^{t_0,x}(s)+B(s)v(s)+b(s)\}\rd s\\
	\hspace{3cm}+\{(C(s)+D(s)\Xi(s))X^{t_0,x}(s)+D(s)v(s)+\sigma(s)\}\rd W(s),\ s\in[t_0,T],\\
	X^{t_0,x}(t_0)=x.
	\end{dcases}
\end{equation*}
We note that the above system is an equation for the optimal state process $X=X^{t_0,x}$, and the optimal control process $u=u^{t_0,x}$ is obtained as the outcome of the strategy $(\Xi,v)$ by inserting the solution $X^{t_0,x}$ into the expression $u^{t_0,x}=\Xi X^{t_0,x}+v$. The closed-loop system \eqref{eq_closed-loop_system_SDE} of the controlled SDE satisfies the flow property: for each $0\leq t_0<t_1<t\leq T$,
\begin{equation*}
	X^{t_0,x}(t)=X^{t_1,X^{t_0,x}(t_1)}(t),\ u^{t_0,x}(t)=u^{t_1,X^{t_0,x}(t_1)}(t).
\end{equation*}
This is also called the time-consistency, which ensures that the strategy $(\Xi,v)$ is meaningful in dynamic sense. It is well-known that the dynamic optimality of the strategy $(\Xi,v)$ is closely related to the well-posedness of a Riccati differential equation and a backward stochastic differential equation (BSDE).

Compared to SDEs, the solutions of SVIEs are not Markovian or even not semimartingales in general. These features of SVIEs make some analysis dramatically difficult since we cannot use some fundamental tools of stochastic calculus such as It\^{o}'s formula directly to compute some functionals of SVIEs. Furthermore, SVIEs do not satisfy the flow property. To illustrate this, consider the following (uncontrolled) stochastic Volterra process:
\begin{equation*}
	X^{t_0,x}(t):=x+\int^t_{t_0}K(t,s)\rd W(s),\ t\in(t_0,T),
\end{equation*}
where $K$ is a deterministic function, and $x$ is an initial constant at time $t_0$. This is in particular the case of a fractional Brownian motion. Due to the two time variables structure of the kernel $K$, $X$ does not satisfy the flow property in the standard sense: for $t_0<t_1<t<T$,
\begin{equation*}
	X^{t_0,x}(t)\neq X^{t_1,X^{t_0,x}(t_1)}(t)\ \ \text{where}\ \ X^{t_1,X^{t_0,x}(t_1)}(t):=X^{t_0,x}(t_1)+\int^t_{t_1}K(t,s)\rd W(s).
\end{equation*}
The lack of the flow property gives rise to the following question: \emph{What is the ``state space'' of SVIEs?} Actually, in the above example, the space of constants $x$ (the space of values of $X^{t_0,x}(t)$) is not suitable for the state space since it is not preserved in dynamic sense. To overcome this difficulty, \cite{AbEu19,HaWo21,ViZh19,WaYoZh20} considered an auxiliary process $\Theta$ defined by
\begin{equation*}
	\Theta^{t_0,x}(s,t):=x+\int^t_{t_0}K(s,r)\rd W(r),\ (s,t)\in\triangle_2(t_0,T),
\end{equation*}
where $\triangle_2(t_0,T):=\{(s,t)\,|\,t_0<t<s<T\}$. Then, for any $t_0<t_1<t<T$, we have the following generalized flow property:
\begin{equation*}
	X^{t_0,x}(t)=X^{t_1,\Theta^{t_0,x}(\cdot,t_1)}(t)\ \ \text{where}\ \ X^{t_1,\Theta^{t_0,x}(\cdot,t_1)}(t):=\Theta^{t_0,x}(t,t_1)+\int^t_{t_1}K(t,s)\rd W(s).
\end{equation*}
In the above expression, $\Theta^{t_0,x}(\cdot,t_1)=(\Theta^{t_0,x}(t,t_1))_{t\in(t_1,T)}$ can be regarded as the free term of $X^{t_1,\Theta^{t_0,x}(\cdot,t_1)}$ evaluated at time $t_1$. We note that $\Theta^{t_0,x}(s,\cdot)=(\Theta^{t_0,x}(s,t))_{t\in[t_0,s]}$ is a semimartingale for each fixed $s\in(t_0,T)$, and it holds that $\Theta^{t_0,x}(t,t)=X^{t_0,x}(t)$. In the dynamic point of view, the space of the auxiliary process $\Theta(\cdot,\cdot)$ can be seen as a state space for the above example. It is worth to mention that, in the theory of mathematical finance, the auxiliary process $\Theta$ corresponds to the so-called forward variance which is a financial index available in the market (see \cite{EuRo18,ViZh19} and references sited therein).

Let us turn to the controlled SVIE \eqref{eq_state}. As mentioned in the above example, the sole space of $X(t)$ is not sufficient for describing the state space. In view of the generalized flow property mentioned above, it is natural to consider the following auxiliary process:
\begin{equation*}
	\Theta(s,t)=x(s)+\int^t_{t_0}\{A(s,r)X(r)+B(s,r)u(r)+b(s,r)\}\rd r+\int^t_{t_0}\{C(s,r)X(r)+D(s,r)u(r)+\sigma(s,r)\}\rd W(r)
\end{equation*}
for $(s,t)\in\triangle_2(t_0,T)$. A particular case of $\Theta(\cdot,\cdot)$ was introduced in \cite{WaT18} to treat LQ stochastic Volterra control problems. In this paper, we call the above process a \emph{forward state process} and regard the pair $(X,\Theta)$ as the (generalized) state of the controlled SVIE \eqref{eq_state}. Introducing the forward state process $\Theta$ to LQ problem also fits into the so-called causal projection approach studied by \cite{HaLiYo21,PrYo96} in deterministic LQ Volterra control problems. From this sense, the forward state $\Theta(s,t)$ can be seen as the causal projection of the original controlled SVIE \eqref{eq_state} which is determined by information of $X$ and $u$ up to the current time $t$. Unlike the SDEs case, the above observation inspires us to consider a feedback control process represented by
\begin{equation*}
	u(t)=\Xi(t)X(t)+\int^T_t\Gamma(s,t)\Theta(s,t)\rd s+v(t)
\end{equation*}
for some deterministic functions $\Xi$ and $\Gamma$ and a stochastic inhomogeneous term $v$. We call the triplet $(\Xi,\Gamma,v)$ a \emph{causal feedback strategy}. By inserting the above expression of the outcome $u$ into the controlled SVIE \eqref{eq_state}, for each $(t_0,x)$, we get the following closed-loop system:
\begin{equation}\label{eq_closed-loop_system}
	\begin{dcases}
	X^{t_0,x}(t)=x(t)+\int^t_{t_0}\{A(t,s)X^{t_0,x}(s)+B(t,s)u^{t_0,x}(s)+b(t,s)\}\rd s\\
	\hspace{3cm}+\int^t_{t_0}\{C(t,s)X^{t_0,x}(s)+D(t,s)u^{t_0,x}(s)+\sigma(t,s)\}\rd W(s),\ t\in(t_0,T),\\
	\Theta^{t_0,x}(s,t)=x(s)+\int^t_{t_0}\{A(s,r)X^{t_0,x}(r)+B(s,r)u^{t_0,x}(r)+b(s,r)\}\rd r\\
	\hspace{3cm}+\int^t_{t_0}\{C(s,r)X^{t_0,x}(r)+D(s,r)u^{t_0,x}(r)+\sigma(s,r)\}\rd W(r),\ (s,t)\in\triangle_2(t_0,T),\\
	u^{t_0,x}(t)=\Xi(t)X^{t_0,x}(t)+\int^T_t\Gamma(s,t)\Theta^{t_0,x}(s,t)\rd s+v(t),\ t\in(t_0,T).
	\end{dcases}
\end{equation}
We note that the above system is a coupled equation for the state process $X^{t_0,x}$ and the forward state process $\Theta^{t_0,x}$, and it is beyond the typical framework of SVIEs studied in existing works. We call the pair $(X^{t_0,x},\Theta^{t_0,x})$ satisfying the above system a \emph{causal feedback solution} to the controlled SVIE \eqref{eq_state} at $(t_0,x)$ corresponding to the causal feedback strategy $(\Xi,\Gamma,v)$.

The purpose of this paper is to formulate and investigate causal feedback solutions of controlled SVIE \eqref{eq_state} systematically. On one hand, the present paper can be seen as a preliminary of our continued paper \cite{HaWa2}, where we go into the main topic of LQ stochastic Volterra control problems and characterize the optimal causal feedback strategy. On the other hand, besides the preliminary, the present paper provides a novel framework for SVIEs which is interesting by its own right in theoretical point of view. The main contributions of this paper are the following three points:
\begin{itemize}
\item[(i)]
We show that there exists a unique causal feedback solution to the controlled SVIE \eqref{eq_state} for any given causal feedback strategy $(\Xi,\Gamma,v)$ and any input condition $(t_0,x)$ (see \cref{theo_SVIE}).
\item[(ii)]
We prove a \emph{duality principle} for the causal feedback solution (see \cref{theo_Type-II_EBSVIE}).
\item[(iii)]
We derive a \emph{representation formula for a quadratic functional} of the causal feedback solution in terms of the free term $x$ (see \cref{theo_Lyapunov--Volterra}).
\end{itemize}

Let us make some comments on the contribution (i) compared to existing works. SVIEs with regular coefficients were first studied by Berger and Mizel \cite{BeMi80,BeMi80+} and followed by \cite{AlNu97,OkZh93,PaPr90}. Studies of SVIEs with singular coefficients (which include fractional SDEs and fractional Brownian motion with Hurst parameter less than $1/2$) can be found in \cite{CoDe01,WaZ08,Zh08,Zh10}, etc. Recently, in \cite{Ha21+++}, the first author of the present paper investigated the algebraic structure of linear SVIEs with singular coefficients and proved a variation of constants formula. In the above works, there are no controls in SVIEs. Optimal control problems of SVIEs were studied by \cite{AgOk15,ChYo07,Ha21+,Ha21++,ShWaYo15,ShWeXi20,WaT18,WaT20,WaTZh17,Yo08} in the open-loop framework, where feedback strategies were not considered. In the special case of SVIEs with completely monotone and convolution-type kernels, several kinds of feedback representations of the optimal controls were investigated by \cite{AbMiPh21,BoCoMa12,CoMa15}. Specifically, Abi Jaber, Miller and Pham \cite{AbMiPh21} studied LQ stochastic Volterra control problems with completely monotone and convolution-type kernels. Based on the so-called Markovian lift approach, which is an infinite-dimensional approach associated with the special structure of the completely monotone kernel, they obtained a kind of a linear feedback representation of the optimal control; see also \cite{AbMiPh21+} for the study on the associated integral operator Riccati equation. Compared to these works, we study linear controlled SVIEs with general (singular and non-convolution-type) coefficients. Our definition of causal feedback strategies, which is motivated from the generalized flow property as mentioned above, is different from the infinite dimensional ones considered in \cite{AbMiPh21,BoCoMa12,CoMa15}. It is worth to mention that Viens and Zhang \cite{ViZh19} proved the functional It\^{o} formula for non-linear SVIEs with non-convolution-type coefficients and applied it to optimal control theory. The corresponding optimal strategy is similar to our definition of causal feedback strategies, but they did not study the well-posedness of the associated closed-loop system \eqref{eq_closed-loop_system}. We emphasize that the closed-loop system \eqref{eq_closed-loop_system} is a coupled system of the original state process $X$ and the forward state process $\Theta$, which is beyond the framework of the existing literature of SVIEs. Our contribution (i) ensures the well-posedness of the closed-loop system \eqref{eq_closed-loop_system}.

Let us illustrate the motivation of the contributions (ii) and (iii). In typical LQ control problems for SDEs, the controlled dynamics of the state process $X$ is described by a linear SDE of the forms of \eqref{eq_state_SDE} in the open-loop framework or \eqref{eq_closed-loop_system_SDE} in the closed-loop framework. In such problems, the key points are to calculate (in terms of given parameters such as controls, initial states and some inhomogeneous terms) a linear functional of the form
\begin{equation}\label{eq_linear_functional_SDE}
	\bE\Bigl[\int^T_{t_0}\langle\psi(t),X(t)\rangle\rd t\Bigr]
\end{equation}
with an adapted process $\psi$ and a quadratic functional of the form
\begin{equation}\label{eq_quadratic_functional_SDE}
	\bE\Bigl[\int^T_{t_0}\langle Q(t)X(t),X(t)\rangle\rd t\Bigr]
\end{equation}
with a symmetric matrix-valued function $Q$. It is well-known that the linear functional \eqref{eq_linear_functional_SDE} for the solution $X$ to a linear SDE can be calculated by using the duality principle in terms of a BSDE, and the quadratic functional \eqref{eq_quadratic_functional_SDE} can be calculated in terms of a Lyapunov differential equation (see \cite{SuYo20}). In contrast, in our continued paper \cite{HaWa2}, we consider LQ stochastic Volterra control problems in the framework of causal feedback strategies, where we are faced with the calculations of a linear functional of the form
\begin{equation}\label{eq_linear_functional_SVIE}
	\bE\Bigl[\int^T_{t_0}\Bigl\{\langle\psi(t),X^{t_0,x}(t)\rangle+\int^T_t\langle\chi(s,t),\Theta^{t_0,x}(s,t)\rangle\rd s\Bigr\}\rd t\Bigr]
\end{equation}
and a quadratic functional of the form
\begin{equation}\label{eq_quadratic_functional_SVIE}
\begin{split}
	&\bE\Bigl[\int^T_{t_0}\Bigl\{\langle Q^{(1)}(t)X^{t_0,x}(t),X^{t_0,x}(t)\rangle+2\int^T_t\langle Q^{(2)}(s,t)X^{t_0,x}(t),\Theta^{t_0,x}(s,t)\rangle\rd s\\
	&\hspace{2cm}+\int^T_t\!\!\int^T_t\langle Q^{(3)}(s_1,s_2,t)\Theta^{t_0,x}(s_2,t),\Theta^{t_0,x}(s_1,t)\rangle\rd s_1\!\rd s_2\Bigr\}\rd t\Bigr].
\end{split}
\end{equation}
Here, $(X^{t_0,x},\Theta^{t_0,x})$ is the causal feedback solution to the controlled SVIE \eqref{eq_state} corresponding to a given causal feedback strategy $(\Xi,\Gamma,v)$, $\psi$ and $\chi$ are stochastic processes, and $Q^{(1)},Q^{(2)}$ and $Q^{(3)}$ are matrix-valued deterministic functions. The analyses of \eqref{eq_linear_functional_SVIE} and \eqref{eq_quadratic_functional_SVIE} serve as basis of the study of LQ stochastic Volterra control problems in \cite{HaWa2}, and these are the main topics of the present paper.

In order to calculate the linear functional \eqref{eq_linear_functional_SVIE}, as a counterpart of a BSDE in the Volterra setting, we introduce a novel class of backward stochastic equations which we call a \emph{Type-II extended backward stochastic Volterra integral equation} (Type-II EBSVIE), see \eqref{eq_Type-II_EBSVIE}. This is a generalization of a class of Type-II backward stochastic Volterra integral equations (Type-II BSVIEs) introduced by Yong \cite{Yo08} to the framework of causal feedback strategies. We prove the well-posedness of Type-II EBSVIEs and derive a duality principle between the causal feedback solution to a controlled SVIE and the adapted solution to a Type-II EBSVIE.

In order to calculate the quadratic functional \eqref{eq_quadratic_functional_SVIE}, we introduce a system of Lyapunov-type Volterra integro-differential equations which we call a \emph{Lyapunov--Volterra equation}, see \eqref{eq_Lyapunov--Volterra}. We prove the well-posedness of Lyapunov--Volterra equations. Furthermore, we provide a representation formula for the quadratic functional \eqref{eq_quadratic_functional_SVIE} of a causal feedback solution to a (homogeneous) controlled SVIE in terms of the free term $x$ and the solution to a Lyapunov--Volterra equation.

As mentioned above, the studies on causal feedback strategies including the duality principle and the representation formula for a quadratic functional play fundamental roles in LQ stochastic Volterra control problems. In our continued paper \cite{HaWa2}, we characterize the optimal causal feedback strategy by means of a \emph{Riccati--Volterra equation}, which is a Riccati-type equation closely related to the Lyapunov--Volterra equation, and a Type-II EBSVIE.

The rest of this paper is organized as follows: In \cref{section_SVIE}, we give a standing assumption of this paper and investigate the novel notion of causal feedback solutions to controlled SVIEs. In \cref{section_Type-II_EBSVIE}, we introduce Type-II EBSVIEs and prove the duality principle. In \cref{section_Lyapunov--Volterra}, we derive Lyapunov--Volterra equations and investigate quadratic functionals of controlled SVIEs. The main theorems \ref{theo_SVIE}, \ref{theo_Type-II_EBSVIE} and \ref{theo_Lyapunov--Volterra} are proved in \cref{appendix} systematically.


\subsection*{Notation}

$(\Omega,\cF,\bP)$ is a complete probability space, and $W$ is a one-dimensional Brownian motion. $\bF=(\cF_t)_{t\geq0}$ denotes the $\bP$-augmented filtration generated by $W$. $\bE[\cdot]$ denotes the expectation, and $\bE_t[\cdot]:=\bE[\cdot|\cF_t]$ denotes the conditional expectation given by $\cF_t$ for each $t\geq0$. Throughout this paper, $\bE[\cdot]^{1/2}$ denotes the square root of the expectation $\bE[\cdot]$, not the expectation of the square root. For each $0\leq t_0<T<\infty$, we define
\begin{align*}
	&\triangle_2(t_0,T):=\{(t,s)\in(t_0,T)^2\,|\,T>t>s>t_0\},\ \text{(a triangle region)}\\
	&\square_3(t_0,T):=\{(s_1,s_2,t)\in(t_0,T)^3\,|\,t<s_1\wedge s_2\}.\ \text{(a square pyramid region)}
\end{align*}
For each matrix $M\in\bR^{d_1\times d_2}$ with $d_1,d_2\in\bN$, $|M|$ denotes the Frobenius norm, and $M^\top\in\bR^{d_2\times d_1}$ denotes the transpose. For each $d\in\bN$, $\bS^d$ denotes the set of $(d\times d)$-symmetric matrices. We define $\bR^d:=\bR^{d \times 1}$, that is, each element of $\bR^d$ is understood as a column vector. We denote by $\langle\cdot,\cdot\rangle$ the usual inner product on a Euclidean space. For each set $\Lambda$, $\1_\Lambda$ denotes the characteristic function.

For each $0\leq t_0<T<\infty$ and $d_1,d_2\in\bN$, we define some spaces of stochastic (and deterministic) processes as follows:
\begin{itemize}
\item
$(L^2_\bF(t_0,T;\bR^{d_1\times d_2}),\|\cdot\|_{L^2_\bF(t_0,T)})$ is the Hilbert space of $\bR^{d_1\times d_2}$-valued, square-integrable and $\bF$-progressively measurable processes on $(t_0,T)$.
\item
$(L^{2,1}_\bF(\triangle_2(t_0,T);\bR^{d_1\times d_2}),\|\cdot\|_{L^{2,1}_\bF(\triangle_2(t_0,T))})$ is the Banach space of $\bR^{d_1\times d_2}$-valued and measurable processes $\xi$ on $\triangle_2(t_0,T)$ such that $\xi(t,\cdot)$ is $\bF$-progressively measurable on $(t_0,t)$ for each $t\in(t_0,T)$, and that $\|\xi\|_{L^{2,1}_\bF(\triangle_2(t_0,T))}<\infty$, where
\begin{equation*}
	\|\xi\|_{L^{2,1}_\bF(\triangle_2(t_0,T))}:=\bE\Bigl[\int^T_{t_0}\Bigl(\int^t_{t_0}|\xi(t,s)|\rd s\Bigr)^2\rd t\Bigr]^{1/2}.
\end{equation*}
\item
$(L^2_\bF(\triangle_2(t_0,T);\bR^{d_1\times d_2}),\|\cdot\|_{L^2_\bF(\triangle_2(t_0,T))})$ is the Hilbert space of $\xi\in L^{2,1}_\bF(\triangle_2(t_0,T);\bR^{d_1\times d_2})$ such that $\|\xi\|_{L^2_\bF(\triangle_2(t_0,T))}<\infty$, where
\begin{equation*}
	\|\xi\|_{L^2_\bF(\triangle_2(t_0,T))}:=\bE\Bigl[\int^T_{t_0}\!\!\int^t_{t_0}|\xi(t,s)|^2\rd s\!\rd t\Bigr]^{1/2}.
\end{equation*}
\item
$(L^2_{\bF,\mathrm{c}}(\triangle_2(t_0,T);\bR^{d_1\times d_2}),\|\cdot\|_{L^2_{\bF,\mathrm{c}}(\triangle_2(t_0,T))})$ is the Banach space of $\xi\in L^2_\bF(\triangle_2(t_0,T);\bR^{d_1\times d_2})$ such that $s\mapsto\xi(t,s)$ is uniformly continuous on $(t_0,t)$ with the limits defined by $\xi(t,t):=\lim_{s\uparrow t}\xi(t,s)$ and $\xi(t,t_0):=\lim_{s\downarrow t_0}\xi(t,s)$ for a.e.\ $t\in(t_0,T)$, a.s., and satisfies $\|\xi\|_{L^2_{\bF,\mathrm{c}}(\triangle_2(t_0,T))}<\infty$, where
\begin{equation*}
	\|\xi\|_{L^2_{\bF,\mathrm{c}}(\triangle_2(t_0,T))}:=\bE\Bigl[\int^T_{t_0}\sup_{s\in[t_0,t]}|\xi(t,s)|^2\rd t\Bigr]^{1/2}.
\end{equation*}
\item
$(L^\infty(t_0,T;\bR^{d_1\times d_2}),\|\cdot\|_{L^\infty(t_0,T)})$ is the Banach space of  $\bR^{d_1\times d_2}$-valued essentially bounded measurable functions on $(t_0,T)$.
\item
For $\bT=(t_0,T),(t_0,T)^2,\triangle_2(t_0,T)$ and $\square_3(t_0,T)$, $(L^2(\bT;\bR^{d_1\times d_2}),\|\cdot\|_{L^2(\bT)})$ is the Hilbert space of $\bR^{d_1\times d_2}$-valued and square-integrable deterministic functions on $\bT$.
\item
$(L^{2,1}(\triangle_2(t_0,T);\bR^{d_1\times d_2}),\|\cdot\|_{L^{2,1}(\triangle_2(t_0,T))})$ is the Banach space of $\bR^{d_1\times d_2}$-valued deterministic functions $f$ on $\triangle_2(t_0,T)$ such that $\|f\|_{L^{2,1}(\triangle_2(t_0,T))}<\infty$, where
\begin{equation*}
	\|f\|_{L^{2,1}(\triangle_2(t_0,T))}:=\Bigl(\int^T_{t_0}\Bigl(\int^t_{t_0}|f(t,s)|\rd s\Bigr)^2\rd t\Bigr)^{1/2}.
\end{equation*}
\item
$(L^{2,2,1}(\square_3(t_0,T);\bR^{d_1\times d_2}),\|\cdot\|_{L^{2,2,1}(\square_3(t_0,T))})$ is the Banach space of $\bR^{d_1\times d_2}$-valued deterministic functions $f$ on $\square_3(t_0,T)$ such that $\|f\|_{L^{2,2,1}(\square_3(t_0,T))}<\infty$, where
\begin{equation*}
	\|f\|_{L^{2,2,1}(\square_3(t_0,T))}:=\Bigl(\int^T_{t_0}\!\!\int^T_{t_0}\Bigl(\int^{s_1\wedge s_2}_{t_0}|f(s_1,s_2,t)|\rd t\Bigr)^2\rd s_1\!\rd s_2\Bigr)^{1/2}<\infty.
\end{equation*}
\item
$L^{2,2,1}_\sym(\square_3(t_0,T);\bR^{d\times d})$ is the set of $f\in L^{2,2,1}(\square_3(t_0,T);\bR^{d\times d})$ such that $f(s_1,s_2,t)=f(s_2,s_1,t)^\top$ for a.e.\ $(s_1,s_2,t)\in\square_3(0,T)$. It is easy to see that $L^{2,2,1}_\sym(\square_3(t_0,T);\bR^{d\times d})$ is a closed subspace of $L^{2,2,1}(\square_3(t_0,T);\bR^{d_1\times d_2})$.
\item
$\sL^2(\triangle_2(t_0,T);\bR^{d_1\times d_2})$ is the set of $f\in L^2(\triangle_2(t_0,T);\bR^{d_1\times d_2})$ satisfying the following two conditions:
\begin{itemize}
\item
it holds that
\begin{equation*}
	\|f\|_{\sL^2(\triangle_2(t_0,T))}:=\underset{t\in(t_0,T)}{\mathrm{ess\,sup}}\Bigl(\int^T_t|f(s,t)|^2\rd s\Bigr)^{1/2}<\infty;
\end{equation*}
\item
for any $\ep>0$, there exists a finite partition $\{U_i\}^m_{i=0}$ of $(t_0,T)$ with $t_0=U_0<U_1<\cdots<U_m=T$ such that
\begin{equation*}
	\underset{t\in(U_i,U_{i+1})}{\mathrm{ess\,sup}}\Bigl(\int^{U_{i+1}}_t|f(s,t)|^2\rd s\Bigr)^{1/2}<\ep
\end{equation*}
for each $i\in\{0,1,\dots,m-1\}$.
\end{itemize}
It is easy to see that $(\sL^2(\triangle_2(t_0,T);\bR^{d_1\times d_2}),\|\cdot\|_{\sL^2(\triangle_2(t_0,T))})$ is a Banach space.
\end{itemize}


\begin{rem}\label{rem_sL^2}
The space $\sL^2(\triangle_2(t_0,T);\bR^{d_1\times d_2})$ includes important classes of singular kernels appearing in the study of SVIEs. On one hand, convolution-type kernels of the form $f(t,s)=f_1(t-s)f_2(s)$, $(t,s)\in\triangle_2(0,T)$, with $f_1\in L^2(0,T;\bR)$ and $f_2\in L^\infty(0,T;\bR^{d_1\times d_2})$ is in $\sL^2(\triangle_2(t_0,T);\bR^{d_1\times d_2})$. On the other hand, if $f$ satisfies the $L^p$-integrability
\begin{equation*}
	 \underset{t\in(t_0,T)}{\mathrm{ess\,sup}}\Bigl(\int^T_t|f(s,t)|^p\rd s\Bigr)^{1/p}<\infty
\end{equation*}
for some $p>2$, then it is in $\sL^2(\triangle_2(t_0,T);\bR^{d_1\times d_2})$ too.
\end{rem}

Throughout this paper, $d\in\bN$ represents the dimension of state processes, and $\ell\in\bN$ represents the dimension of control processes. We fix a finite terminal time $T\in(0,\infty)$.


\section{Causal feedback solutions}\label{section_SVIE}

We define the set of input conditions by $\cI:=\{(t_0,x)\,|\,t_0\in[0,T),\,x\in L^2(t_0,T;\bR^d)\}$ and the set of control processes by $\cU(t_0,T):=L^2_\bF(t_0,T;\bR^\ell)$. For each input condition $(t_0,x)\in \cI$ and control $u\in\cU(t_0,T)$, consider the controlled linear SVIE \eqref{eq_state}. The following is the standing assumption of this paper.


\begin{assum}\label{assum_coefficients}
\begin{itemize}
\item
The coefficients: $A\in L^2(\triangle_2(0,T);\bR^{d\times d})$, $B\in L^2(\triangle_2(0,T);\bR^{d\times\ell})$, $C\in\sL^2(\triangle_2(0,T);\bR^{d\times d})$, $D\in\sL^2(\triangle_2(0,T);\bR^{d\times\ell})$.
\item
The inhomogeneous terms: $b\in L^{2,1}_\bF(\triangle_2(0,T);\bR^d)$, $\sigma\in L^2_\bF(\triangle_2(0,T);\bR^d)$.
\end{itemize}
\end{assum}


\begin{rem}
In the standing assumption, the coefficients and the inhomogeneous terms are singular and of non-convolution-types. For example, $A(t,s)$ is allowed to diverge as $s\uparrow t$, and the same is true for $B,C,D,b$ and $\sigma$. Our framework is more general than \cite{ChYo07} (where the coefficients are of non-convolution-types, but $B,C$ and $D$ are essentially regular, and the inhomogeneous terms $b$ and $\sigma$ do not appear) and \cite{AbMiPh21,AbMiPh21+} (where the coefficients are singular, but they are of convolution-types with completely monotone kernels, and the inhomogeneous terms are deterministic). See \cref{rem_sL^2}. It is also worth to mention that the assumptions for the coefficients $C$ and $D$ being in $\sL^2(\triangle_2(t_0,T);\bR^{d_1\times d_2})$ fit into the framework of the so-called $\star$-Volterra kernels introduced in \cite{Ha21+++}.
\end{rem}

Based on the discussion in the introductory section, we define the notions of causal feedback strategies and the associated causal feedback solutions of linear controlled SVIEs.


\begin{defi}
Each triplet $(\Xi,\Gamma,v)\in\cS(0,T):=L^\infty(0,T;\bR^{\ell\times d})\times L^2(\triangle_2(0,T);\bR^{\ell\times d})\times\cU(0,T)$ is called a \emph{casual feedback strategy}. For each $(\Xi,\Gamma,v)\in\cS(0,T)$ and $(t_0,x)\in\cI$, we say that a triplet $(X^{t_0,x},\Theta^{t_0,x},u^{t_0,x})\in L^2_\bF(t_0,T;\bR^d)\times L^2_{\bF,\mathrm{c}}(\triangle_2(t_0,T);\bR^d)\times\cU(t_0,T)$ is a \emph{casual feedback solution} of controlled SVIE \eqref{eq_state} at $(t_0,x)$ corresponding to $(\Xi,\Gamma,v)$ if it satisfies the closed-loop system \eqref{eq_closed-loop_system}. We sometimes call the pair $(X^{t_0,x},\Theta^{t_0,x})$ the causal feedback solution for simplicity. The control process $u^{t_0,x}\in\cU(t_0,T)$ is called the \emph{outcome} of the causal feedback strategy $(\Xi,\Gamma,v)$ at $(t_0,x)$, and we write $(\Xi,\Gamma,v)[t_0,x](t):=u^{t_0,x}(t)$.
\end{defi}

\begin{rem}
Note that if $(X^{t_0,x},\Theta^{t_0,x})\in L^2_\bF(t_0,T;\bR^d)\times L^2_{\bF,\mathrm{c}}(\triangle_2(t_0,T);\bR^d)$ satisfies the closed-loop system \eqref{eq_closed-loop_system} for a given causal feedback strategy $(\Xi,\Gamma,v)\in\cS(0,T)$ and an input condition $(t_0,x)\in\cI$, then the outcome $u^{t_0,x}=(\Xi,\Gamma,v)[t_0,x]$ is automatically in $\cU(t_0,T)$. We emphasize that the causal feedback strategy $(\Xi,\Gamma,v)$ is chosen to be independent of the input condition $(t_0,x)$, while the causal feedback solution $(X^{t_0,x},\Theta^{t_0,x})$ and the outcome $u^{t_0,x}=(\Xi,\Gamma,v)[t_0,x]$ depend on $(t_0,x)$.
\end{rem}

We will also consider the homogeneous version of the controlled SVIE \eqref{eq_state}, where the inhomogeneous terms $b$ and $\sigma$ vanish. In this case, the controlled SVIE \eqref{eq_state} becomes
\begin{equation}\label{eq_state_0}
	X_0(t)=x(t)+\int^t_{t_0}\{A(t,s)X_0(s)+B(t,s)u(s)\}\rd s+\int^t_{t_0}\{C(t,s)X_0(s)+D(t,s)u(s)\}\rd W(s),\ t\in(t_0,T).
\end{equation}
For each causal feedback strategy $(\Xi,\Gamma,v)\in\cS(0,T)$ and input condition $(t_0,x)\in\cI$, the corresponding causal feedback solution $(X^{t_0,x}_0,\Theta^{t_0,x}_0)\in L^2_\bF(t_0,T;\bR^d)\times L^2_{\bF,\mathrm{c}}(\triangle_2(t_0,T);\bR^d)$ of the homogeneous controlled SVIE \eqref{eq_state_0} satisfies
\begin{equation*}
	\begin{dcases}
	X^{t_0,x}_0(t)=x(t)+\int^t_{t_0}\{A(t,s)X^{t_0,x}_0(s)+B(t,s)u^{t_0,x}_0(s)\}\rd s\\
	\hspace{2cm}+\int^t_{t_0}\{C(t,s)X^{t_0,x}_0(s)+D(t,s)u^{t_0,x}_0(s)\}\rd W(s),\ \ t\in(t_0,T),\\
	\Theta^{t_0,x}_0(s,t)=x(s)+\int^t_{t_0}\{A(s,r)X^{t_0,x}_0(r)+B(s,r)u^{t_0,x}_0(r)\}\rd r\\
	\hspace{2cm}+\int^t_{t_0}\{C(s,r)X^{t_0,x}_0(r)+D(s,r)u^{t_0,x}_0(r)\}\rd W(r),\ \ (s,t)\in\triangle_2(t_0,T),\\
	u^{t_0,x}_0(t)=\Xi(t)X^{t_0,x}_0(t)+\int^T_t\Gamma(s,t)\Theta^{t_0,x}_0(s,t)\rd s+v(t),\ t\in(t_0,T).
	\end{dcases}
\end{equation*}
We denote the outcome by $(\Xi,\Gamma,v)^0[t_0,x]:=u^{t_0,x}_0\in\cU(t_0,T)$.

The following is concerned with the existence and uniqueness of the causal feedback solution. This is the first main result of this paper.


\begin{theo}\label{theo_SVIE}
For each causal feedback strategy $(\Xi,\Gamma,v)\in\cS(0,T)$ and each input condition $(t_0,x)\in\cI$, the controlled SVIE \eqref{eq_state} has a unique causal feedback solution $(X^{t_0,x},\Theta^{t_0,x},u^{t_0,x})\in L^2_\bF(t_0,T;\bR^d)\times L^2_{\bF,\mathrm{c}}(\triangle_2(t_0,T);\bR^d)\times\cU(t_0,T)$. Furthermore, there exists a constant $K>0$ depending only on $A,B,C,D,\Xi,\Gamma$ such that
\begin{equation}\label{eq_closed-loop_estimate}
\begin{split}
	 &\|X^{t_0,x}\|_{L^2_\bF(t_0,T)}+\|\Theta^{t_0,x}\|_{L^2_{\bF,\mathrm{c}}(\triangle_2(t_0,T))}+\|u^{t_0,x}\|_{L^2_\bF(t_0,T)}\\
	&\leq K(\|x\|_{L^2(t_0,T)}+\|b\|_{L^{2,1}_\bF(\triangle_2(t_0,T))}+\|\sigma\|_{L^2_\bF(\triangle_2(t_0,T))}+\|v\|_{L^2_\bF(t_0,T)}).
\end{split}
\end{equation}
\end{theo}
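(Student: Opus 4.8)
The plan is to eliminate the outcome $u$ from the closed-loop system \eqref{eq_closed-loop_system} and recast the whole problem as a fixed-point equation for the control process in $\cU(t_0,T)=L^2_\bF(t_0,T;\bR^\ell)$. Given an input $u\in\cU(t_0,T)$, the state $X$ solves the ordinary (uncoupled in the index $s$) linear SVIE obtained from \eqref{eq_state}, whose forward state $\Theta$ is then recovered by direct stochastic integration; feeding $X$ and $\Theta$ into the feedback law defines a new control $\widehat u(t):=\Xi(t)X(t)+\int^T_t\Gamma(s,t)\Theta(s,t)\rd s+v(t)$. A triplet $(X^{t_0,x},\Theta^{t_0,x},u^{t_0,x})$ is a causal feedback solution precisely when $u^{t_0,x}$ is a fixed point of the affine map $u\mapsto\widehat u$. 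The crucial structural fact is \emph{causality}: since $\Theta(\cdot,t)$ and $X(t)$ depend only on $u$ restricted to $(t_0,t)$, the restriction of $\widehat u$ to a time-slab depends only on $u$ on that slab, so the fixed-point problem can be solved forward, one slab at a time.

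First I would settle the two auxiliary well-posedness facts that make the map $u\mapsto\widehat u$ meaningful. The linear SVIE for $X$ (with coefficients $A\in L^2$, $C\in\sL^2$) is uniquely solvable in $L^2_\bF$ with a stability bound; I would prove this by a short-slab contraction in the time-only norm, using the absolute continuity of the integral of $|A|^2$ and, decisively, the partition clause in the definition of $\sL^2(\triangle_2(0,T))$, which forces the singular kernel $C$ to be \emph{uniformly} small near the diagonal on each slab (plain $L^2$-integrability does not suffice here). This step is in the spirit of the $\star$-Volterra kernel theory (cf.\ \cite{Ha21+++}). Given $X$ and $u$, the forward state $\Theta$ obtained by integration has continuous paths in the time variable (It\^o integrals) and lies in $L^2_{\bF,\mathrm{c}}(\triangle_2(t_0,T);\bR^d)$: applying the Burkholder--Davis--Gundy inequality to the diffusion part, integrating over the index $s$, and interchanging the order of integration, the bound is controlled by the essential supremum over the time variable of $\int^T_r|C(s,r)|^2\rd s$ and $\int^T_r|D(s,r)|^2\rd s$, which are finite exactly because $C,D\in\sL^2(\triangle_2(0,T))$; the drift and the forward-coupling term are handled by $A,B,\Gamma\in L^2$ and $\Xi\in L^\infty$.

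Next I would prove the contraction. By linearity, for two inputs the differences $\Delta X,\Delta\Theta$ solve the homogeneous equations driven by $\Delta u$ with vanishing free term, and $\Delta\widehat u=\Xi\Delta X+\int^T_\cdot\Gamma(s,\cdot)\Delta\Theta(s,\cdot)\rd s$. On a slab $(U_i,U_{i+1})$ I would show that $\|\Delta X\|_{L^2_\bF(U_i,U_{i+1})}$ is a \emph{small} multiple of $\|\Delta u\|_{L^2_\bF(U_i,U_{i+1})}$: the self-coupling $A,C$ is a contraction by the previous step, while the sources $B\Delta u,D\Delta u$ are small, the $B$-part by short-slab smallness of $\int|B|^2$ and the $D$-part by the partition smallness of $D$ near the diagonal. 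For the forward-coupling term the key gain is that the outer time-integral over the slab contributes a factor of the slab length, so that, using the finiteness of the $\sL^2$-norm to absorb the index integral $\int^T_r|D(s,r)|^2\rd s$, this term too is a small multiple of $\|\Delta u\|_{L^2_\bF(U_i,U_{i+1})}$. Choosing $\ep$ small in the $\sL^2$-partition and refining it so that each slab is short enough makes the resulting constant $\theta<1$ depend only on $A,B,C,D,\Xi,\Gamma$; the Banach fixed-point theorem then yields a unique $u$ on each slab.

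Finally, by causality I would solve the slabs successively from $t_0$ to $T$, the previously constructed forward state $\Theta^{t_0,x}(\cdot,U_i)$ entering as an updated free term on the next slab through the generalized flow property, and concatenate the finitely many pieces to obtain the unique global causal feedback solution $(X^{t_0,x},\Theta^{t_0,x},u^{t_0,x})$ in the stated spaces. The estimate \eqref{eq_closed-loop_estimate} would follow by running the same inequalities with the genuine data $x,b,\sigma,v$ in place of the differences, producing a slab-wise bound and then summing the finite recursion; the constant $K$ depends only on $A,B,C,D,\Xi,\Gamma$, since the number and placement of the slabs are determined by the kernels alone. I expect the main obstacle to be the slab estimate together with the underlying well-posedness of the $X$-SVIE: one must simultaneously tame the singular diffusion kernels $C,D$ near the diagonal (which genuinely requires the $\sL^2$ partition, not mere $L^2$-integrability), control the diagonal value $X(r)=\Theta(r,r)$ through the time-supremum built into the $L^2_{\bF,\mathrm{c}}$-norm, and handle the nonlocal forward-coupling term $\int^T_r\Gamma(\tau,r)\Theta(\tau,r)\rd\tau$, which links all indices at once.
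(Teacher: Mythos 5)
Your proposal is correct, but it takes a genuinely different route from the paper. You keep the forward state $\Theta$ explicit and run a Banach fixed point directly on the control $u\in\cU(t_0,T)$, exploiting causality to decompose the problem into finitely many time-slabs on each of which the affine map $u\mapsto\widehat u$ is a contraction, and then concatenating. The paper instead first \emph{eliminates} $\Theta$: substituting the integral expression for $\Theta(s,t)$ into the feedback law and applying the stochastic Fubini theorem, it shows (Lemma \ref{lemm_closed-loop_transform}) that the closed-loop system is equivalent to a single linear SVIE for the augmented vector $\bX=(X,u)\in\bR^{d+\ell}$ with transformed kernels $\bA,\bC$ built from $A^{\Xi,\Gamma}(t,s)=\Xi(t)A(t,s)+\int^T_t\Gamma(r,t)A(r,s)\rd r$, etc.; one application of the general well-posedness lemma for uncontrolled SVIEs (Lemma \ref{lemm_SVIE_general}, itself proved by the same slab partition, the method of continuation and a discrete Gronwall argument) then yields existence, uniqueness and the estimate simultaneously, with $\Theta$ recovered afterwards by direct integration and Doob's inequality. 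The paper's reduction is structurally cleaner — all the smallness bookkeeping is confined to one lemma, and the two-way equivalence makes uniqueness immediate — at the cost of verifying that $A^{\Xi,\Gamma},B^{\Xi,\Gamma}\in L^2$ and $C^{\Xi,\Gamma},D^{\Xi,\Gamma}\in\sL^2$ (a Cauchy--Schwarz computation). Your route avoids the augmentation but must carry the coupled $(X,\Theta)$ estimates through every slab and handle the concatenation and the updated free term explicitly; the required smallness is exactly the one hidden in the paper's partition. One small imprecision: the smallness of the forward-coupling term on a thin slab comes from the absolute continuity of $\iint|\Gamma|^2$ over thin strips (since $\Gamma$ is merely in $L^2(\triangle_2(0,T))$, the strip integral is not literally proportional to the slab length), but this does not affect the validity of the contraction, and both arguments produce a constant $K$ depending only on $A,B,C,D,\Xi,\Gamma$ as claimed in \eqref{eq_closed-loop_estimate}.
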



\begin{proof}
See \cref{appendix_SVIE}.
\end{proof}

Let us further observe the structure of the causal feedback solution. Let a causal feedback strategy $(\Xi,\Gamma,v)\in\cS(0,T)$ be fixed. For each input condition $(t_0,x)\in\cI$, let $(X^{t_0,x},\Theta^{t_0,x},u^{t_0,x})$ be the corresponding causal feedback solution to the controlled SVIE \eqref{eq_state}. Clearly, $(X^{t_0,x},\Theta^{t_0,x},u^{t_0,x})$ does not depend on $\Xi(t)$ for $t\in(0,t_0)$, $\Gamma(s,t)$ for $s\in(t,T)$ and $t\in(0,t_0)$, or $v(t)$ for $t\in(0,t_0)$. Furthermore, for any $t_1\in(t_0,T)$, we have
\begin{equation*}
	\begin{dcases}
	X^{t_0,x}(t)=\Theta^{t_0,x}(t,t_1)+\int^t_{t_1}\{A(t,s)X^{t_0,x}(s)+B(t,s)u^{t_0,x}(s)+b(t,s)\}\rd s\\
	\hspace{3cm}+\int^t_{t_1}\{C(t,s)X^{t_0,x}(s)+D(t,s)u^{t_0,x}(s)+\sigma(t,s)\}\rd W(s),\ t\in(t_1,T),\\
	\Theta^{t_0,x}(s,t)=\Theta^{t_0,x}(s,t_1)+\int^t_{t_1}\{A(s,r)X^{t_0,x}(r)+B(s,r)u^{t_0,x}(r)+b(s,r)\}\rd r\\
	\hspace{3cm}+\int^t_{t_1}\{C(s,r)X^{t_0,x}(r)+D(s,r)u^{t_0,x}(r)+\sigma(s,r)\}\rd W(r),\ (s,t)\in\triangle_2(t_1,T),\\
	u^{t_0,x}(t)=\Xi(t)X^{t_0,x}(t)+\int^T_t\Gamma(s,t)\Theta^{t_0,x}(s,t)\rd s+v(t),\ t\in(t_1,T).
	\end{dcases}
\end{equation*}
By the uniqueness of the causal feedback solution on $(t_1,T)$, we see that
\begin{equation*}
	\begin{dcases}
	X^{t_0,x}(t)=X^{t_1,\Theta^{t_0,x}(\cdot,t_1)}(t),\ t\in(t_1,T),\\
	\Theta^{t_0,x}(s,t)=\Theta^{t_1,\Theta^{t_0,x}(\cdot,t_1)}(s,t),\ (s,t)\in\triangle_2(t_1,T),\\
	u^{t_0,x}(t)=u^{t_1,\Theta^{t_0,x}(\cdot,t_1)}(t),\ t\in(t_1,T).
	\end{dcases}
\end{equation*}
This is the generalized flow property of the causal feedback solution.


\section{Type-II EBSVIEs and duality principle}\label{section_Type-II_EBSVIE}

In this section, for a given causal feedback strategy $(\Xi,\Gamma,v)\in\cS(0,T)$ and $(\psi,\chi)\in L^2_\bF(0,T;\bR^d)\times L^{2,1}_\bF(\triangle_2(0,T);\bR^d)$, we investigate the linear functional \eqref{eq_linear_functional_SVIE} which we rewrite here for readers' convenience:
\begin{equation*}
	\bE\Bigl[\int^T_{t_0}\Bigl\{\langle\psi(t),X^{t_0,x}(t)\rangle+\int^T_t\langle\chi(s,t),\Theta^{t_0,x}(s,t)\rangle\rd s\Bigr\}\rd t\Bigr].
\end{equation*}
Here, $(X^{t_0,x},\Theta^{t_0,x})\in L^2_\bF(t_0,T;\bR^d)\times L^2_{\bF,\mathrm{c}}(\triangle_2(t_0,T);\bR^d)$ is the causal feedback solution to the controlled SVIE \eqref{eq_state} at $(t_0,x)\in\cI$ corresponding to $(\Xi,\Gamma,v)$. By the linearity of $(X^{t_0,x},\Theta^{t_0,x})$ with respect to the tuple $(x,b,\sigma,v)\in L^2(t_0,T;\bR^d)\times L^{2,1}_\bF(\triangle_2(t_0,T);\bR^d)\times L^2_\bF(\triangle_2(t_0,T);\bR^d)\times L^2_\bF(t_0,T;\bR^d)$, together with the estimate \eqref{eq_closed-loop_estimate}, the above term can be seen as a bounded linear functional $\Psi$ on the product space:
\begin{equation*}
	\Psi(x,b,\sigma,v):=\bE\Bigl[\int^T_{t_0}\Bigl\{\langle\psi(t),X^{t_0,x}(t)\rangle+\int^T_t\langle\chi(s,t),\Theta^{t_0,x}(s,t)\rangle\rd s\Bigr\}\rd t\Bigr].
\end{equation*}
Note that the bounded linear functional $\Psi$ is determined by $A,B,C,D,\Xi,\Gamma$ and $(\psi,\chi)$. Our purpose is to derive the dual representation of $\Psi$.

To do so, we introduce the following (linear) \emph{Type-II extended backward stochastic Volterra integral equation} (Type-II EBSVIE):
\begin{equation}\label{eq_Type-II_EBSVIE}
	\begin{dcases}
	\rd\eta(t,s)=-\Bigl\{\chi(t,s)+\Gamma(t,s)^\top\int^T_sB(r,s)^\top\eta(r,s)\rd r+\Gamma(t,s)^\top\int^T_sD(r,s)^\top\zeta(r,s)\rd r\Bigr\}\rd s\\
	\hspace{3cm}+\zeta(t,s)\rd W(s),\ (t,s)\in\triangle_2(0,T),\\
	\eta(t,t)=\psi(t)+\int^T_t(A+B\triangleright\Xi)(r,t)^\top\eta(r,t)\rd r+\int^T_t(C+D\triangleright\Xi)(r,t)^\top\zeta(r,t)\rd r,\ t\in(0,T),
	\end{dcases}
\end{equation}
with $A+B\triangleright\Xi\in L^2(\triangle_2(0,T);\bR^{d\times d})$ and $C+D\triangleright\Xi\in\sL^2(\triangle_2(0,T);\bR^{d\times d})$ defined by
\begin{equation*}
	(A+B\triangleright\Xi)(t,s):=A(t,s)+B(t,s)\Xi(s),\ (C+D\triangleright\Xi)(t,s):=C(t,s)+D(t,s)\Xi(s)
\end{equation*}
for $(t,s)\in\triangle_2(0,T)$.


\begin{defi}
We say that a pair $(\eta,\zeta)\in L^2_{\bF,\mathrm{c}}(\triangle_2(0,T);\bR^d)\times L^2_\bF(\triangle_2(0,T);\bR^d)$ is an \emph{adapted solution} to the Type-II EBSVIE \eqref{eq_Type-II_EBSVIE} if it satisfies
\begin{equation*}
	\begin{dcases}
	\eta(t,\theta)=\eta(t,t)+\int^t_\theta\Bigl\{\chi(t,s)+\Gamma(t,s)^\top\int^T_s B(r,s)^\top\eta(r,s)\rd r+\Gamma(t,s)^\top\int^T_s D(r,s)^\top\zeta(r,s)\rd r\Bigr\}\rd s\\
	\hspace{4cm}-\int^t_\theta\zeta(t,s)\rd W(s),\\
	\eta(t,t)=\psi(t)+\int^T_t(A+B\triangleright\Xi)(r,t)^\top\eta(r,t)\rd r+\int^T_t(C+D\triangleright\Xi)(r,t)^\top\zeta(r,t)\rd r,
	\end{dcases}
\end{equation*}
for a.e.\ $t\in(0,T)$ and any $\theta\in[0,t]$, a.s.
\end{defi}


\begin{rem}
The Type-II EBSVIE \eqref{eq_Type-II_EBSVIE} is a generalization of a class of linear Type-II BSVIEs introduced by Yong \cite{Yo08}. Indeed, if $\chi=0$ and $(\Xi,\Gamma)=(0,0)$, then we see from the first equation in \eqref{eq_Type-II_EBSVIE} that
\begin{equation*}
	\eta(t,s)=\bE_s[\eta(t)],\ \eta(t)=\bE[\eta(t)]+\int^t_0\zeta(t,s)\rd W(s),
\end{equation*}
for $(t,s)\in\triangle_2(0,T)$, where $\eta(t):=\eta(t,t)$. Thus, the second equation in \eqref{eq_Type-II_EBSVIE} becomes
\begin{equation*}
	\eta(t)=\psi(t)+\int^T_t\{A(r,t)^\top\bE_t[\eta(r)]+C(r,t)^\top\zeta(r,t)\}\rd r,\ t\in(0,T),
\end{equation*}
or equivalently, by means of the martingale representation theorem,
\begin{equation*}
	\eta(t)=\psi(t)+\int^T_t\{A(r,t)^\top\eta(r)+C(r,t)^\top\zeta(r,t)\}\rd r-\int^T_t\zeta(t,r)\rd W(r),\ t\in(0,T),
\end{equation*}
for a suitable choice of $\zeta(t,r)$ for $(r,t)\in\triangle_2(0,T)$. This is a linear Type-II BSVIE. In this case, by \cite{Ha21+++,Yo08}, the following duality principle holds:
\begin{align*}
	&\bE\Bigl[\int^T_{t_0}\langle\psi(t),X(t)\rangle\rd t\Bigr]=\bE\Bigl[\int^T_{t_0}\langle\eta(t),\varphi^{x,b,\sigma,v}(t)\rangle\rd t\Bigr]\\
	&=\int^T_{t_0}\langle\bE[\eta(t)],x(t)\rangle\rd t+\bE\Bigl[\int^T_{t_0}\Bigl\{\int^T_t\langle\bE_t[\eta(s)],b(s,t)\rangle\rd s+\int^T_t\langle\zeta(s,t),\sigma(s,t)\rangle\rd s\Bigr\}\rd t\Bigr]\\
	&\hspace{1cm}+\bE\Bigl[\int^T_{t_0}\Bigl\langle\int^T_t\{B(s,t)^\top\bE_t[\eta(s)]+D(s,t)^\top\zeta(s,t)\}\rd s,v(t)\Bigr\rangle\rd t\Bigr],
\end{align*}
where $X\in L^2_\bF(t_0,T;\bR^d)$ is the solution to the SVIE
\begin{equation*}
	X(t)=\varphi^{x,b,\sigma,v}(t)+\int^t_{t_0}A(t,s)X(s)\rd s+\int^t_{t_0}C(t,s)X(s)\rd W(s),\ t\in(t_0,T),
\end{equation*}
with the free term $\varphi^{x,b,\sigma,v}(t)\in L^2_\bF(t_0,T;\bR^d)$ defined by
\begin{equation*}
	\varphi^{x,b,\sigma,v}(t):=x(t)+\int^t_{t_0}\{B(t,s)v(s)+b(t,s)\}\rd s+\int^t_{t_0}\{D(t,s)v(s)+\sigma(t,s)\}\rd W(s),\ t\in(t_0,T).
\end{equation*}
\end{rem}

The following theorem generalizes the above duality principle to the framework of causal feedback solutions of controlled SVIEs. This is the second main result of the present paper.


\begin{theo}\label{theo_Type-II_EBSVIE}
Let $(\Xi,\Gamma)\in L^\infty(0,T;\bR^{\ell\times d})\times L^2(\triangle_2(0,T);\bR^{\ell\times d})$ be fixed. For any $(\psi,\chi)\in L^2_\bF(0,T;\bR^d)\times L^{2,1}_\bF(\triangle_2(0,T);\bR^d)$, there exists a unique adapted solution $(\eta,\zeta)\in L^2_{\bF,\mathrm{c}}(\triangle_2(0,T);\bR^d)\times L^2_\bF(\triangle_2(0,T);\bR^d)$ to the Type-II EBSVIE \eqref{eq_Type-II_EBSVIE}. Furthermore, for any $v\in\cU(0,T)$ and $(t_0,x)\in\cI$, the following \emph{duality principle} holds:
\begin{equation}\label{eq_duality}
\begin{split}
	&\bE\Bigl[\int^T_{t_0}\Bigl\{\langle\psi(t),X^{t_0,x}(t)\rangle+\int^T_t\langle\chi(s,t),\Theta^{t_0,x}(s,t)\rangle\rd s\Bigr\}\rd t\Bigr]\\
	&=\int^T_{t_0}\langle\bE[\eta(t,t_0)],x(t)\rangle\rd t+\bE\Bigl[\int^T_{t_0}\Bigl\{\int^T_t\langle\eta(s,t),b(s,t)\rangle\rd s+\int^T_t\langle\zeta(s,t),\sigma(s,t)\rangle\rd s\Bigr\}\rd t\Bigr]\\
	&\hspace{1cm}+\bE\Bigl[\int^T_{t_0}\Bigl\langle\int^T_t\{B(s,t)^\top\eta(s,t)+D(s,t)^\top\zeta(s,t)\}\rd s,v(t)\Bigr\rangle\rd t\Bigr],
\end{split}
\end{equation}
where $(X^{t_0,x},\Theta^{t_0,x})\in L^2_\bF(t_0,T;\bR^d)\times L^2_{\bF,\mathrm{c}}(\triangle_2(t_0,T);\bR^d)$ is the causal feedback solution to the controlled SVIE \eqref{eq_state} at $(t_0,x)$ corresponding to the causal feedback strategy $(\Xi,\Gamma,v)\in\cS(0,T)$.
\end{theo}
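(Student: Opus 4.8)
The plan is to prove the two assertions in turn: first the well-posedness of the Type-II EBSVIE \eqref{eq_Type-II_EBSVIE}, and then, using the resulting adapted solution, the duality principle \eqref{eq_duality}. The key structural observation for well-posedness is that, for each fixed first argument $t$, the map $s\mapsto(\eta(t,s),\zeta(t,s))$ on $[0,t]$ solves a \emph{linear backward SDE} once two ``diagonal feedback'' quantities are regarded as given: the aggregate driver process $g(s):=\int_s^T\{B(r,s)^\top\eta(r,s)+D(r,s)^\top\zeta(r,s)\}\rd r$ and the terminal datum $\eta(t,t)=\psi(t)+\int_t^T(A+B\triangleright\Xi)(r,t)^\top\eta(r,t)\rd r+\int_t^T(C+D\triangleright\Xi)(r,t)^\top\zeta(r,t)\rd r$. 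With these frozen, $\eta(t,\theta)=\bE_\theta[\eta(t,t)+\int_\theta^t\{\chi(t,s)+\Gamma(t,s)^\top g(s)\}\rd s]$ and $\zeta(t,\cdot)$ is recovered by martingale representation. I would therefore set up a map $\mathcal{T}$ on $L^2_{\bF,\mathrm{c}}(\triangle_2(0,T);\bR^d)\times L^2_\bF(\triangle_2(0,T);\bR^d)$ which, from an input field $(\eta,\zeta)$, forms $g$ and the terminal data, solves the BSDE slices, and returns the updated field; a fixed point of $\mathcal{T}$ is precisely an adapted solution. Because the terminal condition couples first-arguments $r>t$ to the value at $t$ while $g$ couples across all first-arguments, no direct recursion is available, so a global fixed-point argument is the natural device, and a priori estimates for $\mathcal{T}$ (using $A,B\in L^2$, $C,D\in\sL^2$, $\Xi\in L^\infty$, $\Gamma\in L^2$) simultaneously yield uniqueness.

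The main obstacle is that $C$ and $D$ are singular, so $\mathcal{T}$ need not contract on all of $(0,T)$. Here I would invoke the defining partition property of $\sL^2(\triangle_2(0,T))$: given $\ep>0$ there is a finite partition $0=U_0<\dots<U_m=T$ on which the integral operators built from $C,D$ have operator norm below $\ep$. On each block the estimates make $\mathcal{T}$ a genuine contraction; solving from the top block $(U_{m-1},T)$ downward and concatenating produces the global adapted solution, in the spirit of the $\star$-Volterra calculus and the (singular) BSVIE theory of \cite{Ha21+++,Yo08}. Along the way one must justify the stochastic Fubini theorem for the $\rd W$-integrals against singular kernels and the continuity property defining $L^2_{\bF,\mathrm{c}}$; this is delicate bookkeeping rather than a conceptual difficulty, but it is where the real work lies.

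For the duality principle, I would fix $s\in(t_0,T)$ and apply It\^{o}'s product formula to $t\mapsto\langle\eta(s,t),\Theta^{t_0,x}(s,t)\rangle$ on $[t_0,s]$, since $\eta(s,\cdot)$ (an It\^{o} process with drift $-\{\chi(s,t)+\Gamma(s,t)^\top g(t)\}$ and diffusion $\zeta(s,t)$) and $\Theta^{t_0,x}(s,\cdot)$ are both semimartingales in their second argument. Using $\Theta^{t_0,x}(s,s)=X^{t_0,x}(s)$ and $\Theta^{t_0,x}(s,t_0)=x(s)$, taking expectations (killing the martingale parts), integrating over $s\in(t_0,T)$, and then substituting both the terminal condition for $\eta(s,s)$ and the outcome formula $u^{t_0,x}(t)=\Xi(t)X^{t_0,x}(t)+\int_t^T\Gamma(s',t)\Theta^{t_0,x}(s',t)\rd s'+v(t)$, I obtain an identity whose terms I reorganize by Fubini on $\triangle_2(0,T)$. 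The contributions containing $A$ and $C$, together with the $\Xi X$ part of $u$, cancel exactly against the $(A+B\triangleright\Xi)$ and $(C+D\triangleright\Xi)$ terms produced by the terminal condition for $\eta(s,s)$, using $\Xi(t)^\top B(s,t)^\top=(B\triangleright\Xi)(s,t)^\top$ and the analogous identity for $D$.

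The crucial cancellation is between the $\Gamma$-driver of the EBSVIE and the $\Gamma$-feedback of the outcome. After Fubini, the driver term contributes $-\int_{t_0}^T\bE[\langle g(t),\int_t^T\Gamma(s,t)\Theta^{t_0,x}(s,t)\rd s\rangle]\rd t$, while the $\int\Gamma\Theta$ part of $u$, paired against $\int_t^T\{B(s,t)^\top\eta(s,t)+D(s,t)^\top\zeta(s,t)\}\rd s=g(t)$, contributes the same expression with opposite sign; this is exactly the design principle behind the term $\Gamma(t,s)^\top\int_s^T(B^\top\eta+D^\top\zeta)\rd r$ in \eqref{eq_Type-II_EBSVIE}. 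What remains are precisely the $x$-, $b$-, $\sigma$- and $v$-terms on the right-hand side of \eqref{eq_duality}, with $\int_t^T\{B(s,t)^\top\eta(s,t)+D(s,t)^\top\zeta(s,t)\}\rd s$ as the $v$-coefficient. Relative to well-posedness this part is routine, once the integrability needed to justify It\^{o}'s formula and the repeated use of the (stochastic) Fubini theorem—guaranteed by \cref{theo_SVIE} and the a priori estimates for the EBSVIE—is in place.
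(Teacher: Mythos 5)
Your treatment of the duality principle coincides with the paper's: for a.e.\ fixed first argument, apply It\^{o}'s formula to $\langle\eta(s,\cdot),\Theta^{t_0,x}(s,\cdot)\rangle$ on $(t_0,s)$, take expectations, integrate over $s$, substitute the diagonal condition for $\eta(s,s)$ and the outcome formula for $u^{t_0,x}$, and observe the cancellation between the $\Gamma$-driver of \eqref{eq_Type-II_EBSVIE} and the $\Gamma$-feedback of the strategy. The divergence is in the well-posedness half, and this is where your proposal has a gap. The paper does \emph{not} run a localized contraction: it proves the duality identity first (valid for any putative adapted solution), then extracts a \emph{global} a priori estimate from it by testing against arbitrary $v$ (to bound $\int_t^T\{B^\top\eta+D^\top\zeta\}\rd s$ in $L^2_\bF$, hence $F_2$) and against arbitrary $\varphi$ through the choices $b=\lambda(A+B\triangleright\Xi)\varphi$, $\sigma=\lambda(C+D\triangleright\Xi)\varphi$ (to bound $F_1$), using the stability estimate \eqref{eq_closed-loop_estimate} of the closed-loop forward system; existence then follows from the method of continuation (\cref{lemm_method_of_continuation}).

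The concrete gap in your block-wise contraction is this: the partition property defining $\sL^2$ only makes $\mathrm{ess\,sup}_{t\in(U_i,U_{i+1})}\bigl(\int_t^{U_{i+1}}|C(r,t)|^2\rd r\bigr)^{1/2}$ small; it says nothing about $\int_{U_{i+1}}^T|C(r,t)|^2\rd r$. The diagonal condition $\eta(t,t)=\psi(t)+\int_t^T(A+B\triangleright\Xi)(r,t)^\top\eta(r,t)\rd r+\int_t^T(C+D\triangleright\Xi)(r,t)^\top\zeta(r,t)\rd r$ therefore couples $\eta(t,t)$, for $t$ in the current block, to $\zeta(r,t)$ for \emph{all} $r\in(t,T)$ with operator norm of order $\|C+D\triangleright\Xi\|_{\sL^2(\triangle_2(0,T))}$, which does not shrink with the block. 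In the classical Type-II BSVIE ($\Gamma=0$) this is harmless, because $\zeta(r,\cdot)$ for $r$ in higher blocks is already fully determined by martingale representation of $\eta(r,r)$ from earlier stages; but here the driver $\Gamma(t,s)^\top\int_s^T\{B^\top\eta+D^\top\zeta\}(r,s)\rd r$ ties every slice to the unknowns at second argument $s$, so $\zeta(r,t)$ with $r$ above the block and $t$ inside it remains an unknown of the current stage. Consequently the map $\mathcal{T}$ you describe is \emph{not} a contraction on a block in the natural norm: its Lipschitz ``matrix'' has an off-diagonal entry of size $\|C\|_{\sL^2}$. The scheme is probably repairable --- the dangerous coupling is always composed with an $\ep$-small one, so an iterate of $\mathcal{T}$, or $\mathcal{T}$ in a weighted norm, should contract --- but that two-scale bookkeeping is precisely the missing step, and it is what the paper's duality-based a priori estimate is designed to circumvent.
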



\begin{proof}
See \cref{appendix_Type-II_EBSVIE}.
\end{proof}


\section{Lyapunov--Volterra equations and quadratic functionals}\label{section_Lyapunov--Volterra}

In this section, we investigate the quadratic functional \eqref{eq_quadratic_functional_SVIE} with respect to the homogeneous controlled SVIE \eqref{eq_state_0}. Specifically, for each fixed $(\Xi,\Gamma)\in L^\infty(0,T;\bR^{\ell\times d})\times L^2(\triangle_2(0,T);\bR^{\ell\times d})$ and $(Q^{(1)},Q^{(2)},Q^{(3)})\in L^\infty(0,T;\bS^d)\times L^2(\triangle_2(0,T);\bR^{d\times d})\times L^{2,2,1}_\sym(\square_3(0,T);\bR^{d\times d})$, consider the following term:
\begin{equation}\label{eq_quadratic_functional_0}
\begin{split}
	&\bE\Bigl[\int^T_{t_0}\Bigl\{\langle Q^{(1)}(t)X^{t_0,x}_0(t),X^{t_0,x}_0(t)\rangle+2\int^T_t\langle Q^{(2)}(s,t)X^{t_0,x}_0(t),\Theta^{t_0,x}_0(s,t)\rangle\rd s\\
	&\hspace{1.5cm}+\int^T_t\!\!\int^T_t\langle Q^{(3)}(s_1,s_2,t)\Theta^{t_0,x}_0(s_2,t),\Theta^{t_0,x}_0(s_1,t)\rangle\rd s_1\!\rd s_2\Bigr\}\rd t\Bigr],
\end{split}
\end{equation}
where $(X^{t_0,x}_0,\Theta^{t_0,x}_0)\in L^2_\bF(t_0,T;\bR^d)\times L^2_{\bF,\mathrm{c}}(\triangle_2(t_0,T);\bR^d)$ is the causal feedback solution to the homogeneous controlled SVIE \eqref{eq_state_0} at $(t_0,x)\in\cI$ corresponding to the causal feedback strategy $(\Xi,\Gamma,0)\in\cS(0,T)$. \eqref{eq_quadratic_functional_0} can be written as $\Phi^{t_0}(x,x)$ with
\begin{align*}
	\Phi^{t_0}(x,y)&:=\bE\Bigl[\int^T_{t_0}\Bigl\{\langle Q^{(1)}(t)X^{t_0,x}_0(t),X^{t_0,y}_0(t)\rangle\\
	&\hspace{2cm}+\int^T_t\langle Q^{(2)}(s,t)X^{t_0,x}_0(t),\Theta^{t_0,y}_0(s,t)\rangle\rd s+\int^T_t\langle Q^{(2)}(s,t)X^{t_0,y}_0(t),\Theta^{t_0,x}_0(s,t)\rangle\rd s\\
	&\hspace{2cm}+\int^T_t\!\!\int^T_t\langle Q^{(3)}(s_1,s_2,t)\Theta^{t_0,x}_0(s_2,t),\Theta^{t_0,y}_0(s_1,t)\rangle\rd s_1\!\rd s_2\Bigr\}\rd t\Bigr]
\end{align*}
for $x,y\in L^2(t_0,T;\bR^d)$. For each fixed $t_0\in[0,T)$, noting the linearity of $(X^{t_0,x}_0,\Theta^{t_0,x}_0)$ with respect to $x\in L^2(t_0,T;\bR^d)$, the estimate \eqref{eq_closed-loop_estimate} and the symmetricity of $(Q^{(1)},Q^{(2)},Q^{(3)})$, we see that $\Phi^{t_0}$ is a bounded symmetric bilinear form on the Hilbert space $L^2(t_0,T;\bR^d)$. Therefore, there exists a unique self-adjoint bounded linear operator $\cP^{t_0}$ on $L^2(t_0,T;\bR^d)$ such that
\begin{align*}
	&\bE\Bigl[\int^T_{t_0}\Bigl\{\langle Q^{(1)}(t)X^{t_0,x}_0(t),X^{t_0,x}_0(t)\rangle+2\int^T_t\langle Q^{(2)}(s,t)X^{t_0,x}_0(t),\Theta^{t_0,x}_0(s,t)\rangle\rd s\\
	&\hspace{1.5cm}+\int^T_t\!\!\int^T_t\langle Q^{(3)}(s_1,s_2,t)\Theta^{t_0,x}_0(s_2,t),\Theta^{t_0,x}_0(s_1,t)\rangle\rd s_1\!\rd s_2\Bigr\}\rd t\Bigr]\\
	&=\Phi^{t_0}(x,x)=\langle\cP^{t_0}x,x\rangle_{L^2(t_0,T)}
\end{align*}
for any $x\in L^2(t_0,T;\bR^d)$, where $\langle\cdot,\cdot\rangle_{L^2(t_0,T)}$ denotes the inner product on $L^2(t_0,T;\bR^d)$. We consider the following natural ansatz of the bounded self-adjoint operator $\cP^{t_0}$:
\begin{equation*}
	(\cP^{t_0}x)(t):=P^{(1)}(t)x(t)+\int^T_{t_0}P^{(2)}(t, r,t_0)x( r)\rd r,\ t\in(t_0,T),
\end{equation*}
with suitable choice of matrix-valued deterministic functions $P^{(1)}$ and $P^{(2)}$. The above ansatz is a combination of a multiplicative operator and a Hilbert--Schmidt integral operator. The purpose of this section is to derive a Lyapunov-type dynamics of the pair $(P^{(1)},P^{(2)})$.


\subsection{Preliminaries}

First, we introduce a suitable space of the pairs $(P^{(1)},P^{(2)})$ and investigate some fundamental properties.


\begin{defi}
We denote by $\Pi(0,T)$ the set of pairs $(P^{(1)},P^{(2)})$ with $P^{(1)}:(0,T)\to\bR^{d\times d}$ and $P^{(2)}:\square_3(t_0,T)\to\bR^{d\times d}$ such that
\begin{itemize}
\item
$P^{(1)}\in L^\infty(0,T;\bS^d)$;
\item
for a.e.\ $(s_1,s_2)\in(0,T)^2$, $t\mapsto P^{(2)}(s_1,s_2,t)$ is absolutely continuous on $(0,s_1\wedge s_2)$;
\item
the function $P^{(2)}(s_1,s_2,s_1\wedge s_2):=\lim_{t\uparrow s_1\wedge s_2}P(s_1,s_2,t)$, $(s_1,s_2)\in(0,T)^2$, is in $L^2((0,T)^2;\bR^{d\times d})$;
\item
the function $\dot{P}^{(2)}(s_1,s_2,t):=\frac{\partial P^{(2)}}{\partial t}(s_1,s_2,t)$, $(s_1,s_2,t)\in\square_3(0,T)$, is in $L^{2,2,1}(\square_3(0,T);\bR^{d\times d})$;
\item
for a.e.\ $(s_1,s_2,t)\in\square_3(0,T)$, it holds that $P^{(2)}(s_1,s_2,t)=P^{(2)}(s_2,s_1,t)^\top$.
\end{itemize}
\end{defi}


\begin{rem}
Observe that $\Pi(0,T)$ is a Banach space with the norm
\begin{align*}
	&\|P\|_{\Pi(0,T)}:=\underset{t\in(0,T)}{\mathrm{ess\,sup}}|P^{(1)}(t)|+\Bigl(\int^T_0\!\!\int^T_0|P^{(2)}(s_1,s_2,s_1\wedge s_2)|^2\rd s_1\!\rd s_2\Bigr)^{1/2}\\
	&\hspace{4cm}+\Bigl(\int^T_0\!\!\int^T_0\Bigl(\int^{s_1\wedge s_2}_0|\dot{P}^{(2)}(s_1,s_2,t)|\rd t\Bigr)^2\rd s_1\!\rd s_2\Bigr)^{1/2}
\end{align*}
for $P=(P^{(1)},P^{(2)})\in\Pi(0,T)$. Furthermore, the following holds:
\begin{equation*}
	\underset{t\in(0,T)}{\mathrm{ess\,sup}}|P^{(1)}(t)|+\Bigl(\int^T_0\!\!\int^T_0\sup_{t\in[0,s_1\wedge s_2]}|P^{(2)}(s_1,s_2,t)|^2\rd s_1\!\rd s_2\Bigr)^{1/2}\leq\|P\|_{\Pi(0,T)}<\infty.
\end{equation*}
\end{rem}


\begin{lemm}\label{lemm_self-adjoint}
Let $P=(P^{(1)},P^{(2)})\in\Pi(0,T)$. Then for each $t_0\in[0,T)$, the map $\cP^{t_0}:L^2(t_0,T;\bR^d)\to L^2(t_0,T;\bR^d)$ defined by
\begin{equation*}
	(\cP^{t_0}x)(t):=P^{(1)}(t)x(t)+\int^T_{t_0}P^{(2)}(t, r,t_0)x( r)\rd r,\ t\in(t_0,T),
\end{equation*}
for $x\in L^2(t_0,T;\bR^d)$, is a self-adjoint bounded linear operator on the Hilbert space $L^2(t_0,T;\bR^d)$.
\end{lemm}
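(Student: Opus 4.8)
The plan is to decompose $\cP^{t_0}$ as the sum of a multiplication operator and an integral operator and to treat the two pieces separately. Write $\cP^{t_0}=\cM^{t_0}+\cK^{t_0}$, where $(\cM^{t_0}x)(t):=P^{(1)}(t)x(t)$ and $(\cK^{t_0}x)(t):=\int^T_{t_0}P^{(2)}(t,r,t_0)x(r)\rd r$ for $x\in L^2(t_0,T;\bR^d)$. Linearity of both maps is immediate, so it remains to check well-definedness, boundedness, and self-adjointness of each summand.

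First I would handle $\cM^{t_0}$. Since $P^{(1)}\in L^\infty(0,T;\bS^d)$, for any $x\in L^2(t_0,T;\bR^d)$ we have $\|\cM^{t_0}x\|_{L^2(t_0,T)}\leq\|P^{(1)}\|_{L^\infty(0,T)}\|x\|_{L^2(t_0,T)}$, so $\cM^{t_0}$ is bounded on $L^2(t_0,T;\bR^d)$. Its self-adjointness follows from the pointwise symmetry $P^{(1)}(t)\in\bS^d$: for $x,y\in L^2(t_0,T;\bR^d)$ one has $\langle P^{(1)}(t)x(t),y(t)\rangle=\langle x(t),P^{(1)}(t)y(t)\rangle$ for a.e.\ $t$, and integrating over $(t_0,T)$ gives $\langle\cM^{t_0}x,y\rangle_{L^2(t_0,T)}=\langle x,\cM^{t_0}y\rangle_{L^2(t_0,T)}$.

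Next I would treat $\cK^{t_0}$. The key observation is that the kernel $(t,r)\mapsto P^{(2)}(t,r,t_0)$ lies in $L^2((t_0,T)^2;\bR^{d\times d})$: indeed, since $t_0<t\wedge r$ for $(t,r)\in(t_0,T)^2$, the value $P^{(2)}(t,r,t_0)$ is well-defined and pointwise dominated by $\sup_{\tau\in[0,t\wedge r]}|P^{(2)}(t,r,\tau)|$, whence
\begin{equation*}
\int^T_{t_0}\!\!\int^T_{t_0}|P^{(2)}(t,r,t_0)|^2\rd r\rd t\leq\int^T_0\!\!\int^T_0\sup_{\tau\in[0,s_1\wedge s_2]}|P^{(2)}(s_1,s_2,\tau)|^2\rd s_1\rd s_2\leq\|P\|_{\Pi(0,T)}^2<\infty
\end{equation*}
by the estimate recorded in the remark following the definition of $\Pi(0,T)$. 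Consequently $\cK^{t_0}$ is a Hilbert--Schmidt (hence bounded) integral operator on $L^2(t_0,T;\bR^d)$.

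Finally, the self-adjointness of $\cK^{t_0}$ uses the transpose-symmetry $P^{(2)}(s_1,s_2,t)=P^{(2)}(s_2,s_1,t)^\top$. For $x,y\in L^2(t_0,T;\bR^d)$, Fubini's theorem (justified by the square-integrability just established) together with relabeling the integration variables gives
\begin{equation*}
\langle\cK^{t_0}x,y\rangle_{L^2(t_0,T)}=\int^T_{t_0}\!\!\int^T_{t_0}\langle P^{(2)}(t,r,t_0)x(r),y(t)\rangle\rd r\rd t=\int^T_{t_0}\!\!\int^T_{t_0}\langle P^{(2)}(r,t,t_0)x(t),y(r)\rangle\rd r\rd t,
\end{equation*}
and the symmetry relation converts $P^{(2)}(r,t,t_0)$ into $P^{(2)}(t,r,t_0)^\top$, so the right-hand side equals $\langle x,\cK^{t_0}y\rangle_{L^2(t_0,T)}$. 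Adding the two pieces yields that $\cP^{t_0}$ is a self-adjoint bounded linear operator, as claimed. The argument is essentially routine; the only point requiring care is the extraction of a square-integrable two-variable kernel from the three-variable object $P^{(2)}$ by freezing the last argument at $t_0$, which is precisely where the supremum bound built into $\|\cdot\|_{\Pi(0,T)}$ is invoked.
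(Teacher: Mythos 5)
Your proof is correct and follows essentially the same route as the paper's: the paper declares boundedness ``clear'' and then performs exactly your variable-swap computation using $P^{(1)}(t)\in\bS^d$ and $P^{(2)}(s_1,s_2,t_0)=P^{(2)}(s_2,s_1,t_0)^\top$ to get self-adjointness. The only difference is that you spell out the boundedness via the Hilbert--Schmidt bound $\int^T_{t_0}\!\int^T_{t_0}|P^{(2)}(t,r,t_0)|^2\rd r\rd t\leq\|P\|_{\Pi(0,T)}^2$, a detail the paper omits but which is exactly the supremum estimate recorded in the remark after the definition of $\Pi(0,T)$.
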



\begin{proof}
It is clear that $\cP^{t_0}$ is a bounded linear operator on $L^2(t_0,T;\bR^d)$. Noting that $P^{(1)}(t)\in\bS^d$ and $P^{(2)}(s_1,s_2,t)=P^{(2)}(s_2,s_1,t)^\top$, for each $x,y\in L^2(t_0,T;\bR^d)$,
\begin{align*}
	\langle\cP^{t_0}x,y\rangle_{L^2(t_0,T)}&=\int^T_{t_0}\langle(\cP^{t_0}x)(t),y(t)\rangle\rd t\\
	&=\int^T_{t_0}\langle P^{(1)}(t)x(t),y(t)\rangle\rd t+\int^T_{t_0}\!\!\int^T_{t_0}\langle P^{(2)}(s_1,s_2,t_0)x(s_2),y(s_1)\rangle\rd s_1\!\rd s_2\\
	&=\int^T_{t_0}\langle x(t),P^{(1)}(t)y(t)\rangle\rd t+\int^T_{t_0}\!\!\int^T_{t_0}\langle x(s_2),P^{(2)}(s_2,s_1,t_0)y(s_1)\rangle\rd s_1\!\rd s_2\\
	&=\int^T_{t_0}\langle x(t),(\cP^{t_0}y)(t)\rangle\rd t=\langle x,\cP^{t_0}y\rangle_{L^2(t_0,T)}.
\end{align*}
Thus, $\cP^{t_0}$ is self-adjoint.
\end{proof}


\begin{lemm}\label{lemm_Pi_0}
Let $P=(P^{(1)},P^{(2)})\in\Pi(0,T)$. Assume that
\begin{equation*}
	\int^T_{t_0}\langle P^{(1)}(t)x(t),x(t)\rangle\rd t+\int^T_{t_0}\!\!\int^T_{t_0}\langle P^{(2)}(s_1,s_2,t_0)x(s_2),x(s_1)\rangle\rd s_1\!\rd s_2=0
\end{equation*}
for any $(t_0,x)\in\cI$. Then $P=0$ in the sense that $P^{(1)}(t)=0$ for a.e.\ $t\in(0,T)$ and $P^{(2)}(s_1,s_2,t)=0$ for any $t\in[0,s_1\wedge s_2]$ for a.e.\ $(s_1,s_2)\in(0,T)^2$.
\end{lemm}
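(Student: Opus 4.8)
The plan is to exploit that the hypothesis holds for \emph{every} input condition $(t_0,x)\in\cI$, and to separate the multiplicative part $P^{(1)}$ from the integral-kernel part $P^{(2)}$ by a localization argument before promoting the result through the regularity in the last variable.

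First I would dispose of $P^{(1)}$. Fix $t_0=0$ and, for a vector $\xi\in\bR^d$, a point $a\in(0,T)$ and small $h>0$, insert the test function $x=\1_{(a,a+h)}\xi\in L^2(0,T;\bR^d)$ into the hypothesis. The diagonal term reduces to $\int_a^{a+h}\langle P^{(1)}(t)\xi,\xi\rangle\rd t$, while the double-integral term is supported on the square $(a,a+h)^2$ and, by Cauchy--Schwarz, is bounded by $|\xi|^2 h\,(\int\!\!\int_{(a,a+h)^2}|P^{(2)}(s_1,s_2,0)|^2\rd s_1\rd s_2)^{1/2}$. Dividing by $h$ and letting $h\downarrow 0$, the diagonal term converges to $\langle P^{(1)}(a)\xi,\xi\rangle$ at every Lebesgue point $a$, whereas the remainder is $o(1)$ because $P^{(2)}(\cdot,\cdot,0)\in L^2((0,T)^2)$ (this membership follows from the supremum bound encoded in the definition of $\Pi(0,T)$). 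Hence $\langle P^{(1)}(a)\xi,\xi\rangle=0$ for a.e. $a$, and running over a countable dense set of $\xi$ together with the symmetry $P^{(1)}(a)\in\bS^d$ gives $P^{(1)}(t)=0$ for a.e. $t\in(0,T)$.

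With $P^{(1)}\equiv 0$, the hypothesis collapses, for each fixed $t_0$, to the vanishing of the quadratic form $x\mapsto\int_{t_0}^T\!\!\int_{t_0}^T\langle P^{(2)}(s_1,s_2,t_0)x(s_2),x(s_1)\rangle\rd s_1\rd s_2$ on $L^2(t_0,T;\bR^d)$. Because the kernel satisfies $P^{(2)}(s_1,s_2,t_0)=P^{(2)}(s_2,s_1,t_0)^\top$, the associated Hilbert--Schmidt operator is self-adjoint (this is precisely the computation carried out in \cref{lemm_self-adjoint}), so a vanishing quadratic form forces the operator itself to vanish by polarization; testing the resulting identity with indicator functions $\1_{E_i}\xi_i$ over rectangles $E_1\times E_2\subseteq(t_0,T)^2$ then yields $P^{(2)}(s_1,s_2,t_0)=0$ for a.e. $(s_1,s_2)\in(t_0,T)^2$, for each fixed $t_0\in[0,T)$.

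The last and most delicate step is to promote this ``for each $t_0$, a.e. $(s_1,s_2)$'' statement to the required ``a.e. $(s_1,s_2)$, for every $t\in[0,s_1\wedge s_2]$''. Here I would fix a countable dense set $D\subseteq[0,T)$ and take the union over $t_0\in D$ of the corresponding exceptional null sets, together with the null set off which $t\mapsto P^{(2)}(s_1,s_2,t)$ fails to be absolutely continuous; this union is still null. For $(s_1,s_2)$ outside it, the map $t\mapsto P^{(2)}(s_1,s_2,t)$ is continuous on $[0,s_1\wedge s_2]$ (with the prescribed endpoint limit) and vanishes on the dense subset $D\cap[0,s_1\wedge s_2)$, hence vanishes identically on $[0,s_1\wedge s_2]$. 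I expect this quantifier-exchange, resting on the absolute continuity in $t$ built into $\Pi(0,T)$, to be the main obstacle; by contrast, the separation of $P^{(1)}$ from $P^{(2)}$ and the passage from a vanishing symmetric quadratic form to a vanishing kernel are comparatively routine.
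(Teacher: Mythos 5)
Your argument is correct and follows essentially the same route as the paper: the same localized test functions $x=\1_{(a,a+h)}\xi$ (the paper uses $\sqrt{N}\1_{[\tau,\tau+1/N]}y$), the Lebesgue differentiation theorem for the diagonal term, and the Cauchy--Schwarz bound showing the $P^{(2)}$ contribution is $o(1)$ after normalization. The paper dismisses the second half with ``then $P^{(2)}=0$ follows easily''; your polarization-plus-rectangles argument for each fixed $t_0$, followed by the quantifier exchange via a countable dense set of $t_0$'s and the continuity of $t\mapsto P^{(2)}(s_1,s_2,t)$, is a correct way to fill in exactly that omitted step.
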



\begin{proof}
It suffices to show that $P^{(1)}=0$. Then $P^{(2)}=0$ follows easily. Considering $t_0=0$ and $x(t):=\sqrt{N}\1_{[\tau,\tau+1/N]}(t)y$, $t\in(0,T)$, with $\tau\in(0,T)$, $N\in\bN$ with $\tau+1/N<T$ and $y\in\bR^d$ being arbitrary, we obtain
\begin{equation*}
	N\int^{\tau+1/N}_\tau\langle P^{(1)}(t)y,y\rangle\rd t+N\int^{\tau+1/N}_\tau\!\!\int^{\tau+1/N}_\tau\langle P^{(2)}(s_1,s_2,0)y,y\rangle\rd s_1\!\rd s_2=0.
\end{equation*}
Thus, we have
\begin{equation*}
	N\int^{\tau+1/N}_\tau P^{(1)}(t)\rd t+N\int^{\tau+1/N}_\tau\!\!\int^{\tau+1/N}_\tau P^{(2)}(s_1,s_2,0)\rd s_1\!\rd s_2=0.
\end{equation*}
By the Lebesgue differentiation theorem, for a.e.\ $\tau\in(0,T)$, the first term tends to $P^{(1)}(\tau)$ as $N\to\infty$. On the other hand, for any $\tau\in(0,T)$, the second term can be estimated as
\begin{equation*}
	\Bigl|N\int^{\tau+1/N}_\tau\!\!\int^{\tau+1/N}_\tau P^{(2)}(s_1,s_2,0)\rd s_1\!\rd s_2\Bigr|\leq\Bigl(\int^{\tau+1/N}_\tau\!\!\int^{\tau+1/N}_\tau|P^{(2)}(s_1,s_2,0)|^2\rd s_1\!\rd s_2\Bigr)^{1/2},
\end{equation*}
and the right-hand side tends to zero as $N\to\infty$. Thus, we have $P^{(1)}(\tau)=0$ for a.e.\ $\tau\in(0,T)$. This completes the proof.
\end{proof}

The following notations are useful for capturing the structure of some expressions arising in calculations of quadratic functionals of SVIEs.


\begin{defi}\label{defi_lint_rint}
Let $P=(P^{(1)},P^{(2)})\in\Pi(0,T)$. For each $M_1:\triangle_2(0,T)\to\bR^{d_1\times d}$ and $M_2:\triangle_2(0,T)\to\bR^{d\times d_2}$ with $d_1,d_2\in\bN$, we define
\begin{align*}
	&(M_1\lint P)(s,t):=M_1(s,t)P^{(1)}(s)+\int^T_tM_1(r,t)P^{(2)}(r,s,t)\rd r,\ (s,t)\in\triangle_2(0,T),\\
	&(P\rint M_2)(s,t):=P^{(1)}(s)M_2(s,t)+\int^T_tP^{(2)}(s,r,t)M_2(r,t)\rd r,\ (s,t)\in\triangle_2(0,T),
\end{align*}
and
\begin{equation*}
	(M_1\lint P\rint M_2)(t):=\!\int^T_tM_1(s,t)P^{(1)}(s)M_2(s,t)\rd s+\!\int^T_t\!\!\int^T_t\!M_1(s_1,t)P^{(2)}(s_1,s_2,t)M_2(s_2,t)\rd s_1\!\rd s_2,\ t\in(0,T).
\end{equation*}
\end{defi}


\begin{lemm}\label{lemm_lint-rint_integrability}
Let $P=(P^{(1)},P^{(2)})\in\Pi(0,T)$. Fix $d_1,d_2\in\bN$.
\begin{itemize}
\item[(i)]
For each $M\in \cL(\triangle_2(0,T);\bR^{d\times d_2})$ with $\cL$ being one of $L^{2,1}_\bF$, $L^2_\bF$, $L^{2,1}$, $L^2$ and $\sL^2$, it holds that $P\rint M\in \cL(\triangle_2(0,T);\bR^{d\times d_2})$ and
\begin{equation*}
	\|P\rint M\|_{\cL(\triangle_2(0,T))}\leq\|P\|_{\Pi(0,T)}\|M\|_{\cL(\triangle_2(0,T))}.
\end{equation*}
Furthermore, $(P\rint M)^\top=M^\top\lint P$.
\item[(ii)]
For each $M_1\in \sL^2(\triangle_2(0,T);\bR^{d_1\times d})$ and $M_2\in \sL^2(\triangle_2(0,T);\bR^{d\times d_2})$, it holds that $M_1\lint P\rint M_2\in L^\infty(0,T;\bR^{d_1\times d_2})$ and
\begin{equation*}
	\|M_1\lint P\rint M_2\|_{L^\infty(0,T)}\leq\|M_1\|_{\sL^2(\triangle_2(0,T))}\|P\|_{\Pi(0,T)}\|M_2\|_{\sL^2(\triangle_2(0,T))}.
\end{equation*}
Furthermore, $(M_1\lint P\rint M_2)^\top=M^\top_2\lint P\rint M^\top_1$. In particular, $M^\top_2\lint P\rint M_2\in L^\infty(0,T;\bS^{d_2})$.
\item[(iii)]
For each $M\in \sL^2(\triangle_2(0,T);\bR^{d_1\times d})$ and $\xi\in L^2_\bF(\triangle_2(0,T);\bR^{d\times d_2})$, it holds that
\begin{equation*}
	M\lint P\rint\xi\in L^2_\bF(0,T;\bR^{d_1\times d_2}),\ \xi^\top\lint P\rint\xi\in L^1_\bF(0,T;\bS^{d_2}).
\end{equation*}
\end{itemize}
\end{lemm}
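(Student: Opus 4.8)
The plan is to reduce all three parts to two pointwise bounds on $P=(P^{(1)},P^{(2)})$ that are built into the norm $\|\cdot\|_{\Pi(0,T)}$: the estimate $\|P^{(1)}\|_{L^\infty(0,T)}\le\|P\|_{\Pi(0,T)}$ and, writing $\overline{P}(s_1,s_2):=\sup_{t\in[0,s_1\wedge s_2]}|P^{(2)}(s_1,s_2,t)|$, both $\|\overline{P}\|_{L^2((0,T)^2)}\le\|P\|_{\Pi(0,T)}$ and the stronger combined bound $\|P^{(1)}\|_{L^\infty(0,T)}+\|\overline{P}\|_{L^2((0,T)^2)}\le\|P\|_{\Pi(0,T)}$ recorded in the remark following the definition of $\Pi(0,T)$. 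Since $P$ is deterministic, each $\rint$ and $\lint$ operation splits into a multiplicative term (controlled by $\|P^{(1)}\|_{L^\infty}$) and a Hilbert--Schmidt-type integral term (controlled by $\overline{P}$ via Cauchy--Schwarz), so every inequality in the statement follows by Minkowski's inequality applied to these two pieces. Progressive measurability in the stochastic cases is automatic: for fixed large variable, the integral over the remaining large variable of an $\bF$-progressive integrand against a deterministic kernel stays $\cF$-adapted in the time variable.

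For part (i), write $(P\rint M)(\theta,\tau)=P^{(1)}(\theta)M(\theta,\tau)+\int_\tau^T P^{(2)}(\theta,r,\tau)M(r,\tau)\rd r$. The multiplicative term is bounded pointwise by $\|P^{(1)}\|_{L^\infty}|M(\theta,\tau)|$, so it reproduces the relevant $M$-norm in each of the five cases. For the integral term I bound $|P^{(2)}(\theta,r,\tau)|\le\overline{P}(\theta,r)$ and apply Cauchy--Schwarz in $r$; the $L^2$, $L^{2,1}$ and $\sL^2$ estimates then follow by Tonelli together with $\int_0^T\!\int_0^T\overline{P}(\theta,r)^2\rd r\rd\theta\le\|P\|_{\Pi(0,T)}^2$ (the $L^{2,1}_\bF$ and $L^2_\bF$ cases are identical after inserting an expectation, since $P$ is deterministic). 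The identity $(P\rint M)^\top=M^\top\lint P$ is a direct computation using $P^{(1)}(\theta)^\top=P^{(1)}(\theta)$ and $P^{(2)}(\theta,r,\tau)^\top=P^{(2)}(r,\theta,\tau)$, the two symmetry properties built into $\Pi(0,T)$.

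For parts (ii) and (iii) the integral term becomes a genuine bilinear form $\int_\tau^T\!\int_\tau^T M_1(s_1,\tau)P^{(2)}(s_1,s_2,\tau)M_2(s_2,\tau)\rd s_1\rd s_2$, which I estimate by two successive Cauchy--Schwarz inequalities as $\le\|M_1(\cdot,\tau)\|_{L^2(\tau,T)}\,\|\overline{P}\|_{L^2((0,T)^2)}\,\|M_2(\cdot,\tau)\|_{L^2(\tau,T)}$; combined with the matching bound $\|P^{(1)}\|_{L^\infty}\|M_1(\cdot,\tau)\|_{L^2(\tau,T)}\|M_2(\cdot,\tau)\|_{L^2(\tau,T)}$ for the multiplicative term and the combined inequality from the remark, this yields exactly $\|M_1\lint P\rint M_2\|_{L^\infty(0,T)}\le\|M_1\|_{\sL^2(\triangle_2(0,T))}\|P\|_{\Pi(0,T)}\|M_2\|_{\sL^2(\triangle_2(0,T))}$ in (ii). The transpose identity, and hence the symmetry of $M_2^\top\lint P\rint M_2$, again follows from the symmetry of $P$ after relabelling $s_1\leftrightarrow s_2$. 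In (iii) the same pointwise-in-$\tau$ bounds give $|(M\lint P\rint\xi)(\tau)|\le\|P\|_{\Pi(0,T)}\|M\|_{\sL^2(\triangle_2(0,T))}(\int_\tau^T|\xi(s,\tau)|^2\rd s)^{1/2}$ and $|(\xi^\top\lint P\rint\xi)(\tau)|\le\|P\|_{\Pi(0,T)}\int_\tau^T|\xi(s,\tau)|^2\rd s$; after integrating in $\tau$ and taking expectation the right-hand sides become multiples of $\|\xi\|_{L^2_\bF(\triangle_2(0,T))}^2$, placing the first expression in $L^2_\bF(0,T)$ and the second in $L^1_\bF(0,T)$, with symmetry holding pathwise.

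The one step that is not a pure estimate, and which I expect to be the main obstacle, is verifying that $P\rint M$ satisfies the \emph{second} (partition) defining condition of $\sL^2$ in part (i). The multiplicative term inherits this condition directly from $M\in\sL^2$, up to the factor $\|P^{(1)}\|_{L^\infty}$. For the integral term I have $\int_\tau^{U_{i+1}}|(\text{integral term})(\theta,\tau)|^2\rd\theta\le\|M\|_{\sL^2(\triangle_2(0,T))}^2\int_{U_i}^{U_{i+1}}h(\theta)\rd\theta$ with $h(\theta):=\int_0^T\overline{P}(\theta,r)^2\rd r\in L^1(0,T)$; since the indefinite integral of an $L^1$ function is uniformly continuous, choosing the partition $\{U_i\}$ fine enough forces $\int_{U_i}^{U_{i+1}}h<\ep^2/\|M\|_{\sL^2(\triangle_2(0,T))}^2$ on every block, giving the required uniform smallness. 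Taking a common refinement with the partition coming from $M$ completes the verification.
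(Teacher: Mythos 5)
Your proposal is correct and follows exactly the route the paper indicates: the paper omits the details of this lemma entirely, stating only that it ``can be proved by the Cauchy--Schwarz inequality and the symmetricity,'' and your argument is a faithful elaboration of that, reducing everything to the bounds $\|P^{(1)}\|_{L^\infty(0,T)}+\|\overline{P}\|_{L^2((0,T)^2)}\le\|P\|_{\Pi(0,T)}$ from the remark after the definition of $\Pi(0,T)$, with the partition condition for $\sL^2$ handled by absolute continuity of the integral of $h\in L^1$. One small ordering point: in the $L^{2,1}$ case of (i), applying Cauchy--Schwarz in $r$ \emph{before} Tonelli produces $\bigl(\int^T_\tau|M(r,\tau)|^2\rd r\bigr)^{1/2}$, which the $L^{2,1}$ norm does not control; you should first use Tonelli to rewrite $\int^\theta_0\int^T_\tau\overline{P}(\theta,r)|M(r,\tau)|\rd r\rd\tau$ as $\int^T_0\overline{P}(\theta,r)\int^{r\wedge\theta}_0|M(r,\tau)|\rd\tau\rd r$ and only then apply Cauchy--Schwarz in $r$ against $g(r):=\int^r_0|M(r,\tau)|\rd\tau$, whose $L^2(0,T)$ norm is $\|M\|_{L^{2,1}(\triangle_2(0,T))}$.
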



\begin{proof}
This lemma can be proved by the Cauchy--Schwarz inequality and the symmetricity, and we omit the details.
\end{proof}


\subsection{Derivations of Lyapunov--Volterra equations}

By using the notations defined in the last subsection, we derive a Lyapunov-type dynamics of $P=(P^{(1)},P^{(2)})\in\Pi(0,T)$ and investigate the quadratic functional \eqref{eq_quadratic_functional_0}. Before stating the main theorem, we prove the following key lemma, which is helpful not only for understanding how to derive the Lyapunov-type equation, but also for the study of LQ stochastic Volterra control problems.


\begin{lemm}\label{lemm_Ito}
Let $P=(P^{(1)},P^{(2)})\in\Pi(0,T)$, $(t_0,x)\in\cI$ and $u\in\cU(t_0,T)$. Then
\begin{align*}
	&\bE\Bigl[\int^T_{t_0}\Bigl\{\langle P^{(1)}(t)X(t),X(t)\rangle+2\int^T_t\langle P^{(2)}(s,t,t)X(t),\Theta(s,t)\rangle\rd s\Bigr\}\rd t\Bigr]\\
	&=\int^T_{t_0}\langle P^{(1)}(t)x(t),x(t)\rangle\rd t+\int^T_{t_0}\!\!\int^T_{t_0}\langle P^{(2)}(s_1,s_2,t_0)x(s_2),x(s_1)\rangle\rd s_1\!\rd s_2+\bE\Bigl[\int^T_{t_0}(\sigma^\top\lint P\rint\sigma)(t)\rd t\Bigr]\\
	&\hspace{0.5cm}+\bE\Bigl[\int^T_{t_0}\Bigl\{\langle(D^\top\lint P\rint D)(t)u(t),u(t)\rangle\\
	&\hspace{2cm}+2\Bigl\langle(D^\top\lint P\rint C)(t)X(t)+\int^T_t(B^\top\lint P)(s,t)\Theta(s,t)\rd s+(D^\top\lint P\rint\sigma)(t),u(t)\Bigr\rangle\\
	&\hspace{2cm}+\langle(C^\top\lint P\rint C)(t)X(t),X(t)\rangle+2\int^T_t\langle(P\rint A)(s,t)X(t),\Theta(s,t)\rangle\rd s\\
	&\hspace{2cm}+\int^T_t\!\!\int^T_t\langle\dot{P}^{(2)}(s_1,s_2,t)\Theta(s_2,t),\Theta(s_1,t)\rangle\rd s_1\!\rd s_2\\
	&\hspace{2cm}+2\langle(C^\top\lint P\rint\sigma)(t),X(t)\rangle+2\int^T_t\langle(P\rint b)(s,t),\Theta(s,t)\rangle\rd s\Bigr\}\rd t\Bigr],
\end{align*}
where $X$ and $\Theta$ are the state process and the forward state process, respectively, corresponding to the input condition $(t_0,x)$ and the control $u$.
\end{lemm}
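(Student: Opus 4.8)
The plan is to reduce the identity to a collection of ordinary It\^o formulas applied to the forward state process $\Theta$, exploiting the fact that, for each fixed first variable $s$, the map $t\mapsto\Theta(s,t)$ is a semimartingale with differential $\rd_t\Theta(s,t)=\{A(s,t)X(t)+B(s,t)u(t)+b(s,t)\}\rd t+\{C(s,t)X(t)+D(s,t)u(t)+\sigma(s,t)\}\rd W(t)$, together with the two endpoint identities $\Theta(t,t)=X(t)$ (continuity up to the diagonal, available since $\Theta\in L^2_{\bF,\mathrm{c}}(\triangle_2(t_0,T);\bR^d)$) and $\Theta(s,t_0)=x(s)$. First I would treat the $P^{(1)}$-part: for a.e.\ fixed $s\in(t_0,T)$, apply It\^o's formula to $t\mapsto\langle P^{(1)}(s)\Theta(s,t),\Theta(s,t)\rangle$ on $[t_0,s]$ (here $P^{(1)}(s)$ is frozen, so no finite-variation bracket arises), evaluate the endpoints via $\Theta(s,s)=X(s)$ and $\Theta(s,t_0)=x(s)$, integrate in $s$ over $(t_0,T)$, take expectations so that the $\rd W$-terms drop out, and apply Fubini to rewrite $\int_{t_0}^T\int_{t_0}^s\cdots\rd t\,\rd s=\int_{t_0}^T\int_t^T\cdots\rd s\,\rd t$. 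This produces $\bE\int_{t_0}^T\langle P^{(1)}(t)X(t),X(t)\rangle\rd t$ on one side and $\int_{t_0}^T\langle P^{(1)}(t)x(t),x(t)\rangle\rd t$ plus the $P^{(1)}$-contributions of the drift and quadratic-variation terms on the other.

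Next I would treat the $P^{(2)}$-part in the same spirit: for a.e.\ fixed $(s_1,s_2)$, apply It\^o's formula to $t\mapsto\langle P^{(2)}(s_1,s_2,t)\Theta(s_2,t),\Theta(s_1,t)\rangle$ on $[t_0,s_1\wedge s_2]$, where the finite-variation bracket now produces the $\dot P^{(2)}$-term thanks to the assumed absolute continuity of $P^{(2)}(s_1,s_2,\cdot)$. Evaluating at the upper endpoint $t=s_1\wedge s_2$ yields a boundary term in which the process frozen on the diagonal equals $X$; splitting the $(s_1,s_2)$-integration into the regions $\{s_2<s_1\}$ and $\{s_1<s_2\}$ and using the symmetry $P^{(2)}(s_1,s_2,t)=P^{(2)}(s_2,s_1,t)^\top$, these two contributions coincide and combine into the cross term $2\,\bE\int_{t_0}^T\int_t^T\langle P^{(2)}(s,t,t)X(t),\Theta(s,t)\rangle\rd s\,\rd t$. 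The lower endpoint $t=t_0$ gives $\int_{t_0}^T\int_{t_0}^T\langle P^{(2)}(s_1,s_2,t_0)x(s_2),x(s_1)\rangle\rd s_1\rd s_2$.

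It then remains to collect the accumulated drift and quadratic-variation terms from both parts and identify them with the right-hand side. Combining the $P^{(1)}$- and $P^{(2)}$-pieces, the drift terms assemble into $2\int_t^T\langle(P\rint A)(s,t)X(t),\Theta(s,t)\rangle\rd s$, $2\int_t^T\langle(P\rint b)(s,t),\Theta(s,t)\rangle\rd s$, and, inside the $u$-pairing, $2\langle\int_t^T(B^\top\lint P)(s,t)\Theta(s,t)\rd s,u(t)\rangle$, exactly via the definitions in \cref{defi_lint_rint}; the quadratic-variation terms expand into the nine bilinear combinations of $CX,Du,\sigma$ and assemble into $\langle(C^\top\lint P\rint C)X,X\rangle$, $\langle(D^\top\lint P\rint D)u,u\rangle$, $\sigma^\top\lint P\rint\sigma$, and the cross terms $2\langle(D^\top\lint P\rint C)X,u\rangle$, $2\langle(C^\top\lint P\rint\sigma),X\rangle$, $2\langle(D^\top\lint P\rint\sigma),u\rangle$. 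Matching these against the displayed right-hand side completes the bookkeeping.

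The main obstacle is analytic rather than algebraic: under \cref{assum_coefficients} the kernels are only $L^2$, resp.\ $\sL^2$, so the pointwise It\^o computation and, above all, the interchange of the $\rd s$-integrations with the stochastic integral (a stochastic Fubini theorem) and with the expectation must be justified despite the singularities, and one must verify that the local-martingale terms are genuine martingales so that they vanish in expectation. I expect to handle this by first proving the identity for regular data — bounded continuous coefficients and a pair $P$ smooth in its last argument, where all manipulations are classical — and then passing to the general case by approximation, using the continuity estimate \eqref{eq_closed-loop_estimate} of \cref{theo_SVIE} together with the boundedness of the $\lint/\rint$ operations furnished by \cref{lemm_lint-rint_integrability} to control the limit of each term. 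The integrability asserted in \cref{lemm_lint-rint_integrability}(iii), in particular, is precisely what guarantees that the assembled right-hand side is finite.
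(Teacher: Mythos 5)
Your proposal is correct and follows essentially the same route as the paper's proof: apply It\^o's formula to $\langle P^{(1)}\Theta,\Theta\rangle$ and $\langle P^{(2)}\Theta,\Theta\rangle$ in the running time variable, evaluate the endpoints via $\Theta(s,s)=X(s)$ and $\Theta(s,t_0)=x(s)$, symmetrize using $P^{(2)}(s_1,s_2,t)=P^{(2)}(s_2,s_1,t)^\top$, and collect the drift and quadratic-variation terms into the $\lint/\rint$ notation after Fubini. The only cosmetic differences are that the paper integrates the $P^{(2)}$-part over the triangle $\triangle_2(t_0,T)$ and doubles rather than splitting the square into two triangles, and that you spell out an approximation argument for the stochastic-Fubini/martingale justifications that the paper leaves implicit.
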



\begin{proof}
First, we note that $P^{(1)}(t)\in\bS^d$ for a.e.\ $t\in(0,T)$. Applying It\^{o}'s formula to $\langle P^{(1)}(t)\Theta(t,\cdot),\Theta(t,\cdot)\rangle$ on $(t_0,t)$ for a.e.\ $t\in(t_0,T)$, we have
\begin{align*}
	&\bE[\langle P^{(1)}(t)X(t),X(t)\rangle]=\bE[\langle P^{(1)}(t)\Theta(t,t),\Theta(t,t)\rangle]\\
	&=\langle P^{(1)}(t)x(t),x(t)\rangle\\
	&\hspace{0.5cm}+\bE\Bigl[\int^t_{t_0}\Bigl\{2\langle P^{(1)}(t)\Theta(t,s),A(t,s)X(s)+B(t,s)u(s)+b(t,s)\rangle\\
	&\hspace{2cm}+\langle P^{(1)}(t)(C(t,s)X(s)+D(t,s)u(s)+\sigma(t,s)),C(t,s)X(s)+D(t,s)u(s)+\sigma(t,s)\rangle\Bigr\}\rd s\Bigr]\\
	&=\langle P^{(1)}(t)x(t),x(t)\rangle+\bE\Bigl[\int^t_{t_0}\sigma(t,s)^\top P^{(1)}(t)\sigma(t,s)\rd s\Bigr]\\
	&\hspace{0.5cm}+\bE\Bigl[\int^t_{t_0}\Bigl\{\langle D(t,s)^\top P^{(1)}(t)D(t,s)u(s),u(s)\rangle\\
	&\hspace{2cm}+2\langle D(t,s)^\top P^{(1)}(t)C(t,s)X(s)+B(t,s)^\top P^{(1)}(t)\Theta(t,s)+D(t,s)^\top P^{(1)}(t)\sigma(t,s),u(s)\rangle\\
	&\hspace{2cm}+\langle C(t,s)^\top P^{(1)}(t)C(t,s)X(s),X(s)\rangle+2\langle P^{(1)}(t)A(t,s)X(s),\Theta(t,s)\rangle\\
	&\hspace{2cm}+2\langle C(t,s)^\top P^{(1)}(t)\sigma(t,s),X(s)\rangle+2\langle P^{(1)}(t)b(t,s),\Theta(t,s)\rangle\Bigr\}\rd s\Bigr].
\end{align*}
Integrating both sides with respect to $t\in(t_0,T)$ and applying Fubini's theorem, we get
\begin{equation}\label{eq_XX}
\begin{split}
	&\bE\Bigl[\int^T_{t_0}\langle P^{(1)}(t)X(t),X(t)\rangle\rd t\Bigr]\\
	&=\int^T_{t_0}\langle P^{(1)}(t)x(t),x(t)\rangle\rd t+\bE\Bigl[\int^T_{t_0}\!\!\int^T_t\sigma(s,t)^\top P^{(1)}(s)\sigma(s,t)\rd s\!\rd t\Bigr]\\
	&\hspace{0.5cm}+\bE\Bigl[\int^T_{t_0}\Bigl\{\Bigl\langle\int^T_tD(s,t)^\top P^{(1)}(s)D(s,t)\rd s\,u(t),u(t)\Bigr\rangle\\
	&\hspace{2cm}+2\Bigl\langle\int^T_tD(s,t)^\top P^{(1)}(s)C(s,t)\rd s\,X(t)+\int^T_tB(s,t)^\top P^{(1)}(s)\Theta(s,t)\rd s\\
	&\hspace{4cm}+\int^T_tD(s,t)^\top P^{(1)}(s)\sigma(s,t)\rd s,u(t)\Bigr\rangle\\
	&\hspace{2cm}+\Bigl\langle\int^T_tC(s,t)^\top P^{(1)}(s)C(s,t)\rd s\,X(t),X(t)\Bigr\rangle+2\int^T_t\langle P^{(1)}(s)A(s,t)X(t),\Theta(s,t)\rangle\rd s\\
	&\hspace{2cm}+2\Bigl\langle\int^T_tC(s,t)^\top P^{(1)}(s)\sigma(s,t)\rd s,X(t)\Bigr\rangle+2\int^T_t\langle P^{(1)}(s)b(s,t),\Theta(s,t)\rangle\rd s\Bigr\}\rd t\Bigr].
\end{split}
\end{equation}
Next, we note that $P^{(2)}(s_1,s_2,t)=P^{(2)}(s_2,s_1,t)^\top$ for any $t\in[0,s_1\wedge s_2]$ for a.e.\ $(s_1,s_2)\in(0,T)^2$. Applying It\^{o}'s formula to $\langle P^{(2)}(s,t,\cdot)\Theta(t,\cdot),\Theta(s,\cdot)\rangle$ on $(t_0,t)$ for a.e.\ $(s,t)\in\triangle_2(t_0,T)$, we have
\begin{align*}
	&2\bE[\langle P^{(2)}(s,t,t)X(t),\Theta(s,t)\rangle]=2\bE[\langle P^{(2)}(s,t,t)\Theta(t,t),\Theta(s,t)\rangle]\\
	&=2\langle P^{(2)}(s,t,t_0)x(t),x(s)\rangle\\
	&\hspace{0.5cm}+2\bE\Bigl[\int^t_{t_0}\Bigl\{\langle \dot{P}^{(2)}(s,t,r)\Theta(t,r),\Theta(s,r)\rangle\\
	&\hspace{2cm}+\langle P^{(2)}(s,t,r)\Theta(t,r),A(s,r)X(r)+B(s,r)u(r)+b(s,r)\rangle\\
	&\hspace{2cm}+\langle P^{(2)}(s,t,r)(A(t,r)X(r)+B(t,r)u(r)+b(t,r)),\Theta(s,r)\rangle\\
	&\hspace{2cm}+\langle P^{(2)}(s,t,r)(C(t,r)X(r)+D(t,r)u(r)+\sigma(t,r)),C(s,r)X(r)+D(s,r)u(r)+\sigma(s,r)\rangle\Bigr\}\rd r\Bigr]\\
	&=\langle P^{(2)}(s,t,t_0)x(t),x(s)\rangle+\langle P^{(2)}(t,s,t_0)x(s),x(t)\rangle\\
	&\hspace{0.5cm}+\bE\Bigl[\int^t_{t_0}\{\sigma(s,r)^\top P^{(2)}(s,t,r)\sigma(t,r)+\sigma(t,r)^\top P^{(2)}(t,s,r)\sigma(s,r)\}\rd r\Bigr]\\
	&\hspace{0.5cm}+\bE\Bigl[\int^t_{t_0}\Bigl\{\langle(D(s,r)^\top P^{(2)}(s,t,r)D(t,r)+D(t,r)^\top P^{(2)}(t,s,r)D(s,r))u(r),u(r)\rangle\\
	&\hspace{2cm}+2\langle(D(s,r)^\top P^{(2)}(s,t,r)C(t,r)+D(t,r)^\top P^{(2)}(t,s,r)C(s,r))X(r)\\
	&\hspace{4cm}+B(s,r)^\top P^{(2)}(s,t,r)\Theta(t,r)+B(t,r)^\top P^{(2)}(t,s,r)\Theta(s,r)\\
	&\hspace{4cm}+D(s,r)^\top P^{(2)}(s,t,r)\sigma(t,r)+D(t,r)^\top P^{(2)}(t,s,r)\sigma(s,r),u(r)\rangle\\
	&\hspace{2cm}+\langle(C(s,r)^\top P^{(2)}(s,t,r)C(t,r)+C(t,r)^\top P^{(2)}(t,s,r)C(s,r))X(r),X(r)\rangle\\
	&\hspace{2cm}+2\langle P^{(2)}(s,t,r)A(t,r)X(r),\Theta(s,r)\rangle+2\langle P^{(2)}(t,s,r)A(s,r)X(r),\Theta(t,r)\rangle\\
	&\hspace{2cm}+\langle\dot{P}^{(2)}(s,t,r)\Theta(t,r),\Theta(s,r)\rangle+\langle\dot{P}^{(2)}(t,s,r)\Theta(s,r),\Theta(t,r)\rangle\\
	&\hspace{2cm}+2\langle C(s,r)^\top P^{(2)}(s,t,r)\sigma(t,r)+C(t,r)^\top P^{(2)}(t,s,r)\sigma(s,r),X(r)\rangle\\
	&\hspace{2cm}+2\langle P^{(2)}(s,t,r)b(t,r),\Theta(s,r)\rangle+2\langle P^{(2)}(t,s,r)b(s,r),\Theta(t,r)\rangle\Bigr\}\rd r\Bigr].
\end{align*}
We integrate both sides with respect to $(s,t)\in\triangle_2(t_0,T)$ and apply Fubini's theorem. Noting that
\begin{equation*}
	\int^T_{t_0}\!\!\int^T_t\{f(s,t)+f(t,s)\}\rd s\!\rd t=\int_{\triangle_2(t_0,T)}\{f(s,t)+f(t,s)\}\rd(s,t)=\int^T_{t_0}\!\!\int^T_{t_0}f(s_1,s_2)\rd s_1\!\rd s_2
\end{equation*}
for each function $f$ on $(t_0,T)^2$, we obtain
\begin{equation}\label{eq_ThetaX}
\begin{split}
	&2\bE\Bigl[\int^T_{t_0}\!\!\int^T_t\langle P^{(2)}(s,t,t)X(t),\Theta(s,t)\rangle\rd s\!\rd t\Bigr]\\
	&=\int^T_{t_0}\!\!\int^T_{t_0}\langle P^{(2)}(s_1,s_2,t_0)x(s_2),x(s_1)\rangle\rd s_1\!\rd s_2+\bE\Bigl[\int^T_{t_0}\int^T_t\!\!\int^T_t\sigma(s_1,t)^\top P^{(2)}(s_1,s_2,t)\sigma(s_2,t)\rd s_1\!\rd s_2\rd t\Bigr]\\
	&\hspace{0.5cm}+\bE\Bigl[\int^T_{t_0}\Bigl\{\Bigl\langle\int^T_t\!\!\int^T_tD(s_1,t)^\top P^{(2)}(s_1,s_2,t)D(s_2,t)\rd s_1\!\rd s_2\,u(t),u(t)\Bigr\rangle\\
	&\hspace{2cm}+2\Bigl\langle\int^T_t\!\!\int^T_tD(s_1,t)^\top P^{(2)}(s_1,s_2,t)C(s_2,t)\rd s_1\!\rd s_2\,X(t)\\
	&\hspace{4cm}+\int^T_t\!\!\int^T_tB(r,t)^\top P^{(2)}(r,s,t)\rd r\,\Theta(s,t)\rd s\\
	&\hspace{4cm}+\int^T_t\!\!\int^T_tD(s_1,t)^\top P^{(2)}(s_1,s_2,t)\sigma(s_2,t)\rd s_1\!\rd s_2,u(t)\Bigr\rangle\\
	&\hspace{2cm}+\Bigl\langle\int^T_t\!\!\int^T_tC(s_1,t)^\top P^{(2)}(s_1,s_2,t)C(s_2,t)\rd s_1\!\rd s_2\,X(t),X(t)\Bigr\rangle\\
	&\hspace{2cm}+2\int^T_t\Bigl\langle\int^T_t P^{(2)}(s,r,t)A(r,t)\rd r\,X(t),\Theta(s,t)\Bigr\rangle\rd s\\
	&\hspace{2cm}+\int^T_t\!\!\int^T_t\langle\dot{P}^{(2)}(s_1,s_2,t)\Theta(s_2,t),\Theta(s_1,t)\rangle\rd s_1\!\rd s_2\\
	&\hspace{2cm}+2\Bigl\langle\int^T_t\!\!\int^T_tC(s_1,t)^\top P^{(2)}(s_1,s_2,t)\sigma(s_2,t)\rd s_1\!\rd s_2,X(t)\Bigr\rangle\\
	&\hspace{2cm}+2\int^T_t\Bigl\langle\int^T_tP^{(2)}(s,r,t)b( r,t)\rd r,\Theta(s,t)\Bigr\rangle\rd s\Bigr\}\rd t\Bigr].
\end{split}
\end{equation}
By \eqref{eq_XX} and \eqref{eq_ThetaX}, noting \cref{defi_lint_rint}, we get the assertion.
\end{proof}


\begin{rem}
A key point in the above proof is the use of It\^{o}'s formula to some quadratic form of $\Theta(\cdot,\cdot)$, from which one obtains the corresponding quadratic form of $X(\cdot)$. This kind of techniques also appeared in \cite{WaT18} where LQ stochastic Volterra control problems were treated in the open-loop framework.
\end{rem}


\begin{cor}\label{cor_Ito_feedback}
Let $P=(P^{(1)},P^{(2)})\in\Pi(0,T)$ and $(\Xi,\Gamma)\in L^\infty(0,T;\bR^{\ell\times d})\times L^2(\triangle_2(0,T);\bR^{\ell\times d})$ be given. For each $(t_0,x)\in\cI$, let $(X^{t_0,x}_0,\Theta^{t_0,x}_0)\in L^2_\bF(t_0,T;\bR^d)\times L^2_{\bF,\mathrm{c}}(\triangle_2(t_0,T);\bR^d)$ be the causal feedback solution to the homogeneous controlled SVIE \eqref{eq_state_0} at $(t_0,x)\in\cI$ corresponding to the causal feedback strategy $(\Xi,\Gamma,0)\in\cS(0,T)$. Then it holds that
\begin{align*}
	&\int^T_{t_0}\langle P^{(1)}(t)x(t),x(t)\rangle\rd t+\int^T_{t_0}\!\!\int^T_{t_0}\langle P^{(2)}(s_1,s_2,t_0)x(s_2),x(s_1)\rangle\rd s_1\!\rd s_2\\
	&=\bE\Bigl[\int^T_{t_0}\Bigl\{\langle (P^{(1)}(t)-F^{(1)}[\Xi;P](t))X^{t_0,x}_0(t),X^{t_0,x}_0(t)\rangle\\
	&\hspace{2cm}+2\int^T_t\langle(P^{(2)}(s,t,t)-F^{(2)}[\Xi,\Gamma;P](s,t))X^{t_0,x}_0(t),\Theta^{t_0,x}_0(s,t)\rangle\rd s\\
	&\hspace{2cm}-\int^T_t\!\!\int^T_t\langle(\dot{P}^{(2)}(s_1,s_2,t)+F^{(3)}[\Gamma;P](s_1,s_2,t))\Theta^{t_0,x}_0(s_2,t),\Theta^{t_0,x}_0(s_1,t)\rangle\rd s_1\!\rd s_2\Bigr\}\rd t\Bigr],
\end{align*}
where
\begin{equation}\label{eq_Lyapunov--Volterra_coefficients}
\begin{split}
	&F^{(1)}[\Xi;P](t):=(C^\top\lint P\rint C)(t)+\Xi(t)^\top(D^\top\lint P\rint C)(t)+(C^\top\lint P\rint D)(t)\Xi(t)+\Xi(t)^\top(D^\top\lint P\rint D)(t)\Xi(t),\\
	&\hspace{6cm}t\in(0,T),\\
	&F^{(2)}[\Xi,\Gamma;P](s,t):=(P\rint A)(s,t)+(P\rint B)(s,t)\Xi(t)+\Gamma(s,t)^\top(D^\top\lint P\rint C)(t)+\Gamma(s,t)^\top(D^\top\lint P\rint D)(t)\Xi(t),\\
	&\hspace{6cm}(s,t)\in\triangle_2(0,T),\\
	&F^{(3)}[\Gamma;P](s_1,s_2,t):=\Gamma(s_1,t)^\top(B^\top\lint P)(s_2,t)+(P\rint B)(s_1,t)\Gamma(s_2,t)+\Gamma(s_1,t)^\top(D^\top\lint P\rint D)(t)\Gamma(s_2,t),\\
	&\hspace{6cm}(s_1,s_2,t)\in\square_3(0,T).
\end{split}
\end{equation}
\end{cor}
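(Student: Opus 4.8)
The plan is to derive \cref{cor_Ito_feedback} as a direct specialization of \cref{lemm_Ito} to the homogeneous equation, combined with the feedback form of the control. First I would apply \cref{lemm_Ito} with $b=\sigma=0$, taking $(X,\Theta)=(X^{t_0,x}_0,\Theta^{t_0,x}_0)$ and $u=u^{t_0,x}_0$. Since $b$ and $\sigma$ vanish, every term carrying a factor $\sigma^\top\lint P\rint\sigma$, $D^\top\lint P\rint\sigma$, $C^\top\lint P\rint\sigma$, or $P\rint b$ drops out, so the identity reduces to an equation stating that the $x$-quadratic form (the left-hand side of the corollary) equals the quadratic form of $(X^{t_0,x}_0,\Theta^{t_0,x}_0)$ weighted by $P^{(1)}$ and $P^{(2)}(s,t,t)$, minus an expectation of an integrand $\mathcal{B}$ that collects the bilinear-in-$u$ terms together with the $\langle(C^\top\lint P\rint C)X,X\rangle$, the $(P\rint A)$-cross, and the $\dot P^{(2)}$-quadratic contributions. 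Because the $\dot P^{(2)}$ term already appears in the target expression in the correct orientation, it suffices to verify that, after substituting the feedback law, the remaining part of $\mathcal{B}$ equals $\langle F^{(1)}[\Xi;P]X,X\rangle+2\int^T_t\langle F^{(2)}[\Xi,\Gamma;P](s,t)X,\Theta(s,t)\rangle\rd s+\int^T_t\!\!\int^T_t\langle F^{(3)}[\Gamma;P](s_1,s_2,t)\Theta(s_2,t),\Theta(s_1,t)\rangle\rd s_1\rd s_2$.

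Next I would insert the closed-loop relation $u^{t_0,x}_0(t)=\Xi(t)X^{t_0,x}_0(t)+\int^T_t\Gamma(s,t)\Theta^{t_0,x}_0(s,t)\rd s$ into $\mathcal{B}$ and expand. Writing $u=\Xi X+c$ with $c(t):=\int^T_t\Gamma(s,t)\Theta(s,t)\rd s$, the bilinear terms split into a pure $X$-$X$ block, an $X$-$\Theta$ cross block, and a pure $\Theta$-$\Theta$ block. For the $X$-$X$ block I would collect $\langle(D^\top\lint P\rint D)\Xi X,\Xi X\rangle$, the cross contributions $2\langle(D^\top\lint P\rint C)X,\Xi X\rangle$, and $\langle(C^\top\lint P\rint C)X,X\rangle$; using the symmetry of $D^\top\lint P\rint D$ and the transpose identity $(C^\top\lint P\rint D)=(D^\top\lint P\rint C)^\top$ from \cref{lemm_lint-rint_integrability}(ii), these assemble into $\langle F^{(1)}[\Xi;P]X,X\rangle$. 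For the $X$-$\Theta$ block I would gather the four contributions proportional to $(P\rint A)$, $(P\rint B)\Xi$, $\Gamma^\top(D^\top\lint P\rint C)$, and $\Gamma^\top(D^\top\lint P\rint D)\Xi$, the second of which arises from rewriting $2\langle\int^T_t(B^\top\lint P)(s,t)\Theta(s,t)\rd s,\Xi X\rangle$ via $(B^\top\lint P)^\top=P\rint B$ (\cref{lemm_lint-rint_integrability}(i)); these assemble into $F^{(2)}[\Xi,\Gamma;P]$.

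The main bookkeeping obstacle is the $\Theta$-$\Theta$ block, where the orientation of the double integral must be handled carefully. Here the contributions are $\langle(D^\top\lint P\rint D)c,c\rangle$ and $2\langle\int^T_t(B^\top\lint P)(s,t)\Theta(s,t)\rd s,c\rangle$, and each is a double integral over $(s_1,s_2)$ that I must rewrite in the canonical form $\int^T_t\!\!\int^T_t\langle G(s_1,s_2,t)\Theta(s_2,t),\Theta(s_1,t)\rangle\rd s_1\rd s_2$. This requires relabeling the dummy variables $s_1\leftrightarrow s_2$ and repeatedly applying $(B^\top\lint P)^\top=P\rint B$ to align orientations; after this symmetrization the kernel reduces to $F^{(3)}[\Gamma;P]$, which together with the already-present $\dot P^{(2)}$ term produces $\dot P^{(2)}+F^{(3)}[\Gamma;P]$. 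Moving $\mathcal{B}$ to the other side of the identity then flips the sign of the $F^{(1)}$, $F^{(2)}$, and $(\dot P^{(2)}+F^{(3)})$ blocks, which is precisely the claimed formula; throughout, the well-definedness of every $\lint/\rint$ expression is guaranteed by \cref{lemm_lint-rint_integrability}.
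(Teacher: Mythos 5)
Your proposal is correct and follows exactly the paper's own route: apply \cref{lemm_Ito} with $(b,\sigma)=(0,0)$, substitute the closed-loop relation $u^{t_0,x}_0=\Xi X^{t_0,x}_0+\int^T_t\Gamma(s,\cdot)\Theta^{t_0,x}_0(s,\cdot)\rd s$, and match the resulting blocks to the definitions \eqref{eq_Lyapunov--Volterra_coefficients}. The paper leaves the block-by-block bookkeeping implicit, whereas you spell it out (correctly, including the symmetrization of the $\Theta$--$\Theta$ double integral via the transpose identities of \cref{lemm_lint-rint_integrability}).
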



\begin{proof}
Denote the outcome by $u^{t_0,x}_0:=(\Xi,\Gamma,0)^0[t_0,x]$. By using \cref{lemm_Ito} with $(b,\sigma)=(0,0)$, we have
\begin{align*}
	&\int^T_{t_0}\langle P^{(1)}(t)x(t),x(t)\rangle\rd t+\int^T_{t_0}\!\!\int^T_{t_0}\langle P^{(2)}(s_1,s_2,t_0)x(s_2),x(s_1)\rangle\rd s_1\!\rd s_2\\
	&=\bE\Bigl[\int^T_{t_0}\Bigl\{\langle P^{(1)}(t)X^{t_0,x}_0(t),X^{t_0,x}_0(t)\rangle+2\int^T_t\langle P^{(2)}(s,t,t)X^{t_0,x}_0(t),\Theta^{t_0,x}_0(s,t)\rangle\rd s\Bigr\}\rd t\Bigr]\\
	&\hspace{0.5cm}-\bE\Bigl[\int^T_{t_0}\Bigl\{\langle(D^\top\lint P\rint D)(t)u^{t_0,x}_0(t),u^{t_0,x}_0(t)\rangle\\
	&\hspace{2cm}+2\Bigl\langle(D^\top\lint P\rint C)(t)X^{t_0,x}_0(t)+\int^T_t(B^\top\lint P)(s,t)\Theta^{t_0,x}_0(s,t)\rd s,u^{t_0,x}_0(t)\Bigr\rangle\\
	&\hspace{2cm}+\langle(C^\top\lint P\rint C)(t)X^{t_0,x}_0(t),X^{t_0,x}_0(t)\rangle+2\int^T_t\langle (P\rint A)(s,t)X^{t_0,x}_0(t),\Theta^{t_0,x}_0(s,t)\rangle\rd s\\
	&\hspace{2cm}+\int^T_t\!\!\int^T_t\langle\dot{P}^{(2)}(s_1,s_2,t)\Theta^{t_0,x}_0(s_2,t),\Theta^{t_0,x}_0(s_1,t)\rangle\rd s_1\!\rd s_2\Bigr\}\rd t\Bigr].
\end{align*}
By using the relation $u^{t_0,x}_0(t)=\Xi(t)X^{t_0,x}_0(t)+\int^T_t\Gamma(s,t)\Theta^{t_0,x}_0(s,t)\rd s$ and noting the definition \eqref{eq_Lyapunov--Volterra_coefficients} of $(F^{(1)},F^{(2)},F^{(3)})$, we get the assertion.
\end{proof}


\begin{rem}
By the definition \eqref{eq_Lyapunov--Volterra_coefficients}, together with \cref{lemm_lint-rint_integrability}, we see that the map
\begin{equation*}
	P\mapsto(F^{(1)}[\Xi;P],F^{(2)}[\Xi,\Gamma;P],F^{(3)}[\Gamma;P]).
\end{equation*}
is a bounded linear operator from $\Pi(0,T)$ to $L^\infty(0,T;\bS^d)\times L^2(\triangle_2(0,T);\bR^{d\times d})\times L^{2,2,1}_\sym(\square_3(0,T);\bR^{d\times d})$.
\end{rem}

Now we derive a dynamics for $P=(P^{(1)},P^{(2)})\in\Pi(0,T)$ which represents the quadratic functional \eqref{eq_quadratic_functional_0}. Let $(Q^{(1)},Q^{(2)},Q^{(3)})\in L^\infty(0,T;\bS^d)\times L^2(\triangle_2(0,T);\bR^{d\times d})\times L^{2,2,1}_\sym(\square_3(0,T);\bR^{d\times d})$ be given.
From \cref{cor_Ito_feedback}, it is natural to introduce the following equation:
\begin{equation}\label{eq_Lyapunov--Volterra}
	\begin{dcases}
	P^{(1)}(t)=F^{(1)}[\Xi;P](t)+Q^{(1)}(t),\ t\in(0,T),\\
	P^{(2)}(s,t,t)=P^{(2)}(t,s,t)^\top=F^{(2)}[\Xi,\Gamma;P](s,t)+Q^{(2)}(s,t),\ (s,t)\in\triangle_2(0,T),\\
	\dot{P}^{(2)}(s_1,s_2,t)+F^{(3)}[\Gamma;P](s_1,s_2,t)+Q^{(3)}(s_1,s_2,t)=0,\ (s_1,s_2,t)\in\square_3(0,T),
	\end{dcases}
\end{equation}
with $(F^{(1)}[\Xi;P],F^{(2)}[\Xi,\Gamma;P],F^{(3)}[\Gamma;P])$ defined by \eqref{eq_Lyapunov--Volterra_coefficients}. In the integral notation, \eqref{eq_Lyapunov--Volterra} is rewritten as
\begin{equation*}
	\begin{dcases}
	P^{(1)}(t)=\int^T_t(C(s,t)+D(s,t)\Xi(t))^\top P^{(1)}(s)(C(s,t)+D(s,t)\Xi(t))\rd s\\
	\hspace{2cm}+\int^T_t\!\!\int^T_t(C(s_1,t)+D(s_1,t)\Xi(t))^\top P^{(2)}(s_1,s_2,t)(C(s_2,t)+D(s_2,t)\Xi(t))\rd s_1\!\rd s_2\\
	\hspace{2cm}+Q^{(1)}(t),\ t\in(0,T),\\
	P^{(2)}(s,t,t)=P^{(2)}(t,s,t)^\top\\
	=P^{(1)}(s)(A(s,t)+B(s,t)\Xi(t))+\Gamma(s,t)^\top\int^T_tD(r,t)^\top P^{(1)}(r)(C(r,t)+D(r,t)\Xi(t))\rd r\\
	\hspace{0.5cm}+\int^T_tP^{(2)}(s,r,t)(A(r,t)+B(r,t)\Xi(t))\rd r\\
	\hspace{0.5cm}+\Gamma(s,t)^\top\int^T_t\!\!\int^T_tD(r_1,t)^\top P^{(2)}(r_1,r_2,t)(C(r_2,t)+D(r_2,t)\Xi(t))\rd r_1\!\rd r_2\\
	\hspace{0.5cm}+Q^{(2)}(s,t),\ (s,t)\in\triangle_2(0,T),\\
	\dot{P}^{(2)}(s_1,s_2,t)+\Gamma(s_1,t)^\top B(s_2,t)^\top P^{(1)}(s_2)+P^{(1)}(s_1)B(s_1,t)\Gamma(s_2,t)\\
	\hspace{0.2cm}+\Gamma(s_1,t)^\top\int^T_tD(r,t)^\top P^{(1)}(r)D(r,t)\rd r\,\Gamma(s_2,t)\\
	\hspace{0.2cm}+\Gamma(s_1,t)^\top\int^T_tB(r,t)^\top P^{(2)}(r,s_2,t)\rd r+\int^T_tP^{(2)}(s_1,r,t)B(r,t)\rd r\,\Gamma(s_2,t)\\
	\hspace{0.2cm}+\Gamma(s_1,t)^\top\int^T_t\!\!\int^T_tD(r_1,t)^\top P^{(2)}(r_1,r_2,t)D(r_2,t)\rd r_1\!\rd r_2\,\Gamma(s_2,t)\\
	\hspace{0.2cm}+Q^{(3)}(s_1,s_2,t)=0,\ (s_1,s_2,t)\in\square_3(0,T).
	\end{dcases}
\end{equation*}
The above is a coupled system of Lyapunov-type (backward) Volterra integro-differential equations for the pair $P=(P^{(1)},P^{(2)})\in\Pi(0,T)$ of matrix-valued deterministic functions. In this paper, we call the above equation a \emph{Lyapunov--Volterra equation}. We say that $P=(P^{(1)},P^{(2)})\in\Pi(0,T)$ satisfying the above equalities a solution to the Lyapunov--Volterra equation \eqref{eq_Lyapunov--Volterra}.

Now we are ready to state the third main result of this paper.


\begin{theo}\label{theo_Lyapunov--Volterra}
Let $(\Xi,\Gamma)\in L^\infty(0,T;\bR^{\ell\times d})\times L^2(\triangle_2(0,T);\bR^{\ell\times d})$. For each $(Q^{(1)},Q^{(2)},Q^{(3)})\in L^\infty(0,T;\bS^d)\times L^2(\triangle_2(0,T);\bR^{d\times d})\times L^{2,2,1}_\sym(\square_3(0,T);\bR^{d\times d})$, there exists a unique solution $P=(P^{(1)},P^{(2)})\in\Pi(0,T)$ to the Lyapunov--Volterra equation \eqref{eq_Lyapunov--Volterra}. Furthermore, for any $(t_0,x)\in\cI$, the following holds:
\begin{equation}\label{eq_quadratic_representation}
\begin{split}
	&\bE\Bigl[\int^T_{t_0}\Bigl\{\langle Q^{(1)}(t)X^{t_0,x}_0(t),X^{t_0,x}_0(t)\rangle+2\int^T_t\langle Q^{(2)}(s,t)X^{t_0,x}_0(t),\Theta^{t_0,x}_0(s,t)\rangle\rd s\\
	&\hspace{2cm}+\int^T_t\!\!\int^T_t\langle Q^{(3)}(s_1,s_2,t)\Theta^{t_0,x}_0(s_2,t),\Theta^{t_0,x}_0(s_1,t)\rangle\rd s_1\!\rd s_2\Bigr\}\rd t\Bigr]\\
	&=\int^T_{t_0}\langle P^{(1)}(t)x(t),x(t)\rangle\rd t+\int^T_{t_0}\!\!\int^T_{t_0}\langle P^{(2)}(s_1,s_2,t_0)x(s_2),x(s_1)\rangle\rd s_1\!\rd s_2,
\end{split}
\end{equation}
where $(X^{t_0,x}_0,\Theta^{t_0,x}_0)$ is the causal feedback solution to the homogeneous controlled SVIE \eqref{eq_state_0} at $(t_0,x)$ corresponding to $(\Xi,\Gamma,0)\in\cS(0,T)$.
\end{theo}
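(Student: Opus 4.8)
The plan is to separate the statement into existence, uniqueness, and the representation formula \eqref{eq_quadratic_representation}, and to dispatch the latter two quickly so that essentially all the work is the construction of a solution $P=(P^{(1)},P^{(2)})\in\Pi(0,T)$.

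First, the representation formula is immediate from \cref{cor_Ito_feedback}. For any solution $P$ of \eqref{eq_Lyapunov--Volterra} the three defining relations say precisely that $P^{(1)}-F^{(1)}[\Xi;P]=Q^{(1)}$, that $P^{(2)}(s,t,t)-F^{(2)}[\Xi,\Gamma;P](s,t)=Q^{(2)}(s,t)$, and that $\dot P^{(2)}+F^{(3)}[\Gamma;P]=-Q^{(3)}$; substituting them into the identity of \cref{cor_Ito_feedback} collapses its three integrand brackets to $Q^{(1)}$, to $Q^{(2)}$, and (after the minus sign in front of the third bracket in \cref{cor_Ito_feedback}) to $Q^{(3)}$, so that its right-hand side becomes the quadratic functional \eqref{eq_quadratic_functional_0} and its left-hand side becomes the right-hand side of \eqref{eq_quadratic_representation}. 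This proves \eqref{eq_quadratic_representation} for every solution, and uniqueness follows at once: if $P_1,P_2\in\Pi(0,T)$ both solve \eqref{eq_Lyapunov--Volterra}, they give the same representation of the same functional, so the bilinear form attached to $P_1-P_2$ vanishes for every $(t_0,x)\in\cI$, and \cref{lemm_Pi_0} forces $P_1=P_2$.

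For existence I would recast \eqref{eq_Lyapunov--Volterra} as a fixed point of the affine self-map $\mathcal{L}$ of $\Pi(\tau,T)$ sending $P$ to $\tilde P$, where $\tilde P^{(1)}:=F^{(1)}[\Xi;P]+Q^{(1)}$, the terminal trace is $\tilde P^{(2)}(s,t,t):=F^{(2)}[\Xi,\Gamma;P](s,t)+Q^{(2)}(s,t)$ together with its symmetric reflection, and $\tilde P^{(2)}(s_1,s_2,\cdot)$ is recovered by integrating $\partial_t\tilde P^{(2)}=-F^{(3)}[\Gamma;P]-Q^{(3)}$ downward from $t=s_1\wedge s_2$. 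That $\mathcal{L}$ maps $\Pi(\tau,T)$ into itself is checked using \cref{lemm_lint-rint_integrability}: part (ii) gives $F^{(1)}[\Xi;P]\in L^\infty(\bS^d)$, the terminal trace is symmetrized by construction while $F^{(3)}[\Gamma;P]$ and the hypothesis $Q^{(3)}\in L^{2,2,1}_\sym$ render $\partial_t\tilde P^{(2)}$ symmetry-compatible, so that $\tilde P^{(2)}$ satisfies the reflection condition of $\Pi(\tau,T)$, and the downward integration supplies the required absolute continuity and integrability. The decisive point is that, by the operator bound in \cref{lemm_lint-rint_integrability}, the Lipschitz constant of $\mathcal{L}$ on $\Pi(\tau,T)$ is dominated by $\|A\|_{L^2(\triangle_2(\tau,T))}$, $\|B\|_{L^2(\triangle_2(\tau,T))}$, $\|\Gamma\|_{L^2(\triangle_2(\tau,T))}$, $\|C\|_{\sL^2(\triangle_2(\tau,T))}$, and $\|D\|_{\sL^2(\triangle_2(\tau,T))}$, because every summand of every $F^{(i)}$ in \eqref{eq_Lyapunov--Volterra_coefficients} carries at least one such factor while $\|\Xi\|_{L^\infty}$ only needs to be bounded. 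As $\tau\uparrow T$ the $L^2$ norms tend to $0$ by absolute continuity of the integral and the $\sL^2$ norms tend to $0$ by the partition property in the definition of $\sL^2$ (see \cref{rem_sL^2}); hence $\mathcal{L}$ is a contraction for $T-\tau$ small, and Banach's theorem gives a unique solution on $(\tau,T)$.

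To globalize I would fix a partition $0=U_0<\dots<U_m=T$ fine enough for both the $\sL^2$ smallness of $C,D$ and the $L^2$ smallness of $A,B,\Gamma$, and solve the slabs $\{t\in(U_i,U_{i+1})\}$ in decreasing order of $i$, feeding the solution already built on $(U_{i+1},T)$ into the next slab as known data: as the initial value $P^{(2)}(\cdot,\cdot,U_{i+1})$ for the downward integration, and as the pieces of the $\int_{\cdot}^{T}$-integrals lying over $(U_{i+1},T)$. The main obstacle I anticipate is exactly the contraction estimate on these later slabs, because on $\{t\in(U_i,U_{i+1})\}$ the unknown $P^{(2)}(s_1,s_2,t)$ must be produced for the full range $s_1,s_2\in(t,T)$, so in the differences $\Delta F^{(i)}$ the kernels $C,D,A,B,\Gamma$ are integrated all the way to $T$ rather than merely over the thin slab, and one cannot extract smallness from a single factor term by term. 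I expect to close it through a Gronwall-type iteration in $t$ that uses two complementary sources of smallness: the thinness of the $t$-slab, which controls the portion of $\Delta P^{(2)}(\cdot,\cdot,t)$ generated by integrating $\Delta\dot P^{(2)}$ over $(t,U_{i+1})$ as well as the outer $A,B,\Gamma$-factors of the terminal traces, and the slab $\sL^2$ smallness of $C,D$, which (since $\Delta P^{(1)}$ vanishes above $U_{i+1}$) controls the $P^{(1)}$ self-coupling of $F^{(1)}$. Concatenating the finitely many slab solutions yields $P\in\Pi(0,T)$ and completes the proof.
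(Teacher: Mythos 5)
Your treatment of the representation formula and of uniqueness is correct, and the uniqueness argument is in fact cleaner than the paper's: substituting the three relations of \eqref{eq_Lyapunov--Volterra} into \cref{cor_Ito_feedback} does collapse the integrand to the quadratic functional \eqref{eq_quadratic_functional_0} (the sign bookkeeping on the $Q^{(3)}$ term is right), and \cref{lemm_Pi_0} then kills the difference of two solutions. For existence, however, you take a genuinely different route from the paper, and it is there that a gap remains. The paper never attempts a direct contraction on the Lyapunov--Volterra equation: it invokes the abstract method of continuation (\cref{lemm_method_of_continuation}) and obtains the required a priori estimate \emph{probabilistically} --- for $\lambda\in[0,1]$ the difference $\bar P$ of two solutions solves the equation with coefficients $\lambda A,\lambda B,\sqrt{\lambda}C,\sqrt{\lambda}D$ and data $\bar Q$, so the already-proved representation formula together with the uniform closed-loop estimate of \cref{theo_SVIE} (see \cref{rem_universal_constant}) bounds the operator norm of $\bar{\cP}^{t_0}$ by $K\|\bar Q\|_{\cY}$ for every $t_0$, and hence bounds $\|F(\bar P)\|_\cY$ and $\|\bar P\|_{\Pi(0,T)}$. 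This completely sidesteps the slab-by-slab analysis you are forced into.

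The concrete gap in your existence argument is the contraction estimate on the non-terminal slabs, which you flag but do not close. Your two announced sources of smallness do not cover every term of $F^{(1)}[\Xi;\Delta P]$: the slab $\sL^2$-smallness of $C,D$ handles $\int_t^T C(s,t)^\top\Delta P^{(1)}(s)C(s,t)\rd s$ only because $\Delta P^{(1)}$ vanishes above $U_{i+1}$, but the double-integral term $\int_t^T\!\!\int_t^T C(s_1,t)^\top\Delta P^{(2)}(s_1,s_2,t)C(s_2,t)\rd s_1\rd s_2$ (and its $D$-analogues inside $D^\top\lint\Delta P\rint D$, etc.) involves $\Delta P^{(2)}(s_1,s_2,t)$ for $s_1,s_2$ ranging over all of $(t,T)^2$, where it is \emph{not} zero above $U_{i+1}$, and the kernels are integrated up to $T$; this term is bounded only by $\|C\|_{\sL^2(\triangle_2(0,T))}^2\sup_{t}\|\Delta P^{(2)}(\cdot,\cdot,t)\|_{L^2((t,T)^2)}$ with an $O(1)$ constant. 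Consequently the map $\cL$ is not a contraction in any single unweighted norm on the slab, no matter how fine the partition. The argument can be repaired --- every term of $F^{(2)}$ and $F^{(3)}$ with $t$ in the slab does carry a small factor ($A,B,\Gamma$ restricted to the slab in their second variable, or $\Delta P^{(1)}$ supported in the slab), so $\|(\cL\Delta P)^{(2)}\|\leq\ep'\|\Delta P\|$, and one can then close by showing $\cL^2$ is a contraction, or by using a weighted norm $\|\Delta P^{(1)}\|+\mu\sup_t\|\Delta P^{(2)}(\cdot,\cdot,t)\|_{L^2}$ with $\mu$ large --- but this extra step is the decisive point of your existence proof and is missing from the proposal as written.
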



\begin{proof}
See \cref{appendix_Lyapunov--Volterra}.
\end{proof}


\appendix

\section{Appendix: Proofs of the main results}\label{appendix}

In this appendix, we prove our main Theorems \ref{theo_SVIE}, \ref{theo_Type-II_EBSVIE} and \ref{theo_Lyapunov--Volterra}. The following lemma is an abstract formulation of the so-called \emph{method of continuation} for (non-linear) equations on a Banach space. This is crucial for the proof of the well-posedness of several kinds of equations arising in this paper.


\begin{lemm}\label{lemm_method_of_continuation}
Let $(\cX,\|\cdot\|_\cX)$ be a Banach space, $(\cY,\|\cdot\|_\cY)$ be a normed space, and $\cT:\cY\to\cX$ be a continuous map. Suppose that a map $F:\cX\to\cY$ satisfies the following two properties:
\begin{itemize}
\item
Lipschitz continuity: There exists a constant $L>0$ such that
\begin{equation*}
	\|F(X_1)-F(X_2)\|_\cY\leq L\|X_1-X_2\|_\cX
\end{equation*}
for any $X_1,X_2\in\cX$.
\item
A priori estimate: There exists a constant $K>0$ such that, for any $\varphi_1,\varphi_2\in\cY$ and $\lambda\in[0,1]$, two solutions $X_1,X_2\in\cX$ of the equations
\begin{equation*}
	X_i=\cT(\lambda F(X_i)+\varphi_i),\ i=1,2,
\end{equation*}
satisfy
\begin{equation*}
	\|X_1-X_2\|_\cX\leq K\|\varphi_1-\varphi_2\|_\cY.
\end{equation*}
\end{itemize}
Then, for any $\varphi\in\cY$, there exists a unique solution $X\in\cX$ to the equation
\begin{equation*}
	X=\cT(F(X)+\varphi).
\end{equation*}
\end{lemm}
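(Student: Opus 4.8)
The plan is to run the classical method of continuation in the parameter $\lambda\in[0,1]$, interpolating between the trivially solvable equation at $\lambda=0$ and the target equation at $\lambda=1$. First I would dispose of uniqueness: if $X_1,X_2\in\cX$ both solve $X_i=\cT(F(X_i)+\varphi)$ for the same $\varphi$, then the a priori estimate with $\lambda=1$ and $\varphi_1=\varphi_2=\varphi$ gives $\|X_1-X_2\|_\cX\leq K\cdot 0=0$, so $X_1=X_2$. The identical reasoning shows that for every $\lambda\in[0,1]$ the equation $X=\cT(\lambda F(X)+\varphi)$ has at most one solution, so the entire issue is existence.

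For existence I would introduce
\[
	\cS:=\{\lambda\in[0,1]\mid\text{for every }\varphi\in\cY\text{ the equation }X=\cT(\lambda F(X)+\varphi)\text{ has a solution}\}.
\]
Clearly $0\in\cS$, since at $\lambda=0$ the equation reduces to $X=\cT(\varphi)$, solved by $X=\cT(\varphi)\in\cX$. The goal is to reach $1\in\cS$.

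The heart of the argument is a continuation step with a step size that is uniform in the base point: I claim there is $\delta>0$, independent of $\lambda_0$ and $\varphi$, such that $\lambda_0\in\cS$ forces $[\lambda_0-\delta,\lambda_0+\delta]\cap[0,1]\subseteq\cS$. Fix $\lambda_0\in\cS$. By existence (from $\lambda_0\in\cS$) and the already-established uniqueness, for each $\psi\in\cY$ there is a unique $\Phi(\psi)\in\cX$ with $\Phi(\psi)=\cT(\lambda_0 F(\Phi(\psi))+\psi)$, and the a priori estimate (with $\lambda=\lambda_0$) shows $\psi\mapsto\Phi(\psi)$ is $K$-Lipschitz. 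For $\lambda\in[0,1]$ I would rewrite $X=\cT(\lambda F(X)+\varphi)$ as $X=\cT\bigl(\lambda_0 F(X)+[(\lambda-\lambda_0)F(X)+\varphi]\bigr)$, i.e.\ as the fixed-point equation $X=\Phi\bigl((\lambda-\lambda_0)F(X)+\varphi\bigr)=:G(X)$. Combining the $K$-Lipschitz property of $\Phi$ with the $L$-Lipschitz property of $F$ gives, for all $X_1,X_2\in\cX$,
\[
	\|G(X_1)-G(X_2)\|_\cX\leq K|\lambda-\lambda_0|\,\|F(X_1)-F(X_2)\|_\cY\leq KL|\lambda-\lambda_0|\,\|X_1-X_2\|_\cX.
\]
Taking $\delta:=\tfrac{1}{2KL}$ (we may assume $K,L>0$, enlarging them if needed, since both conditions persist under enlargement), $G$ is a contraction with constant $\leq\tfrac12$ whenever $|\lambda-\lambda_0|\leq\delta$; as $\cX$ is complete, Banach's fixed-point theorem produces a fixed point of $G$, which is exactly a solution of $X=\cT(\lambda F(X)+\varphi)$. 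Since $\varphi$ was arbitrary, $[\lambda_0-\delta,\lambda_0+\delta]\cap[0,1]\subseteq\cS$.

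Finally, because $\delta$ is independent of $\lambda_0$, starting from $0\in\cS$ and advancing in increments of $\delta$ reaches $1$ after at most $\lceil 1/\delta\rceil$ steps, so $1\in\cS$; combined with uniqueness this gives the unique solvability of $X=\cT(F(X)+\varphi)$. The one point requiring care is precisely the uniformity of $\delta$ in the base point $\lambda_0$: this is what lets the bootstrap terminate in finitely many steps, and it is guaranteed by the a priori estimate providing a single constant $K$ valid for all $\lambda\in[0,1]$. Everything else is a routine application of the contraction mapping principle.
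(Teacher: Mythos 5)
Your proof is correct and is essentially the paper's own argument in slightly different packaging: the paper advances from $\lambda_0=n/N$ to $(n+1)/N$ with $N\geq 2(1+KL)$ via the inductive sequence $X_i=\cT\bigl(\frac{n}{N}F(X_i)+\frac{1}{N}F(X_{i-1})+\varphi\bigr)$, which is precisely the Picard iteration of your contraction $G=\Phi\bigl((\lambda-\lambda_0)F(\cdot)+\varphi\bigr)$ with step $|\lambda-\lambda_0|\leq\frac{1}{2KL}$. Both arguments turn on exactly the two points you flag --- the $\lambda$-uniform constant $K$ in the a priori estimate, which yields a base-point-independent step size, and the resulting contraction constant at most $\tfrac12$ --- so your proposal matches the paper's proof in substance.
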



\begin{proof}
The uniqueness of the solution follows from the a priori estimate. We prove the existence by \emph{the method of continuation}. Let $N\in\bN$ be a number such that $N\geq2(1+KL)$. For each $n\in\{0,1,\dots,N\}$, we say that (P$_n$) holds if for any $\varphi\in\cY$, there exists a unique solution $X\in\cX$ to the equation $X=\cT(\frac{n}{N}F(X)+\varphi)$. We shall show that (P$_N$) holds. Clearly, (P$_0$) holds. Assume that (P$_n$) holds for some $n\in\{0,1,\dots,N-1\}$. Fix an arbitrary $\varphi\in\cY$. By virtue of the assumption (P$_n$), we can define a sequence $X_i\in\cX$ with $i\in\bN$ inductively as the solution to the equation
\begin{equation*}
	X_i=\cT\Bigl(\frac{n}{N}F(X_i)+\frac{1}{N}F(X_{i-1})+\varphi\Bigr),
\end{equation*}
with the convention that $X_0:=0$. By the a priori estimate and the Lipschitz continuity of $F$, for each $i\in\bN$, we have
\begin{align*}
	\|X_{i+1}-X_i\|_\cX\leq\frac{K}{N}\|F(X_i)-F(X_{i-1})\|_\cY\leq\frac{KL}{N}\|X_i-X_{i-1}\|_\cX\leq\frac{1}{2}\|X_i-X_{i-1}\|_\cX,
\end{align*}
where the last inequality follows from $N\geq2(1+KL)$. The above estimate implies that $\{X_i\}_{i\in\bN}$ is a Cauchy sequence on the Banach space $\cX$, and thus the limit $X:=\lim_{i\to\infty}X_i$ exists. Noting the continuity of $F$ and $\cT$, we obtain
\begin{equation*}
	X=\cT\Bigl(\frac{n}{N}F(X)+\frac{1}{N}F(X)+\varphi\Bigr)=\cT\Bigl(\frac{n+1}{N}F(X)+\varphi\Bigr).
\end{equation*}
Since $\varphi\in\cY$ is arbitrary, we see that (P$_{n+1}$) holds. By the induction, we see that (P$_N$) holds. This completes the proof.
\end{proof}


\begin{rem}
We will use the above abstract lemma to show the existence and uniqueness of the solutions to SVIEs, Type-II EBSVIEs and Lyapunov--Volterra equations. In the framework of this paper, we typically consider the case where $\cT$ and $F$ are bounded linear operators. In this case, the Lipschitz continuity of $F$ is trivial, and the a priori estimate of the above form is a key step to show the well-posedness of the equations.
\end{rem}


\subsection{Proof of \cref{theo_SVIE}: Causal feedback solutions of controlled SVIEs}\label{appendix_SVIE}

First, we prove the well-posedness of a linear (uncontrolled) SVIE. Although the following lemma follows from the general theory of \cite{Ha21+++}, we give a simple proof here by means of \cref{lemm_method_of_continuation} for the sake of self-containedness.


\begin{lemm}\label{lemm_SVIE_general}
Let $A\in L^2(\triangle_2(0,T);\bR^{d\times d})$ and $C\in\sL^2(\triangle_2(0,T);\bR^{d\times d})$ be given. For any $\varphi\in L^2_\bF(t_0,T;\bR^d)$ with $t_0\in[0,T)$, there exists a unique solution $X\in L^2_\bF(t_0,T;\bR^d)$ to the SVIE
\begin{equation*}
	X(t)=\varphi(t)+\int^t_{t_0}A(t,s)X(s)\rd s+\int^t_{t_0}C(t,s)X(s)\rd W(s),\ t\in(t_0,T).
\end{equation*}
Furthermore, the following estimate holds:
\begin{equation*}
	\|X\|_{L^2_\bF(t_0,T)}\leq2m^2(1+2L)^{m-1}\|\varphi\|_{L^2_\bF(t_0,T)},
\end{equation*}
where $L:=\|A\|_{L^2(\triangle_2(0,T))}+\|C\|_{\sL^2(\triangle_2(0,T))}$, and $m\in\bN$ is the number of division intervals $0=U_0<U_1<\cdots<U_m=T$ of $(0,T)$ satisfying
\begin{equation*}
	\Bigl(\int^{U_{i+1}}_{U_i}\!\!\int^t_{U_i}|A(t,s)|^2\rd s\!\rd t\Bigr)^{1/2}+\underset{t\in(U_i,U_{i+1})}{\mathrm{ess\,sup}}\Bigl(\int^{U_{i+1}}_t|C(s,t)|^2\rd s\Bigr)^{1/2}\leq\frac{1}{2}
\end{equation*}
for $i=0,1,\dots,m-1$.
\end{lemm}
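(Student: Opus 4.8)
The plan is to invoke \cref{lemm_method_of_continuation} with $\cX=\cY=L^2_\bF(t_0,T;\bR^d)$, $\cT=\mathrm{id}$, and $F$ the linear Volterra operator
\begin{equation*}
	F(X)(t):=\int^t_{t_0}A(t,s)X(s)\rd s+\int^t_{t_0}C(t,s)X(s)\rd W(s),\ t\in(t_0,T),
\end{equation*}
so that the SVIE is exactly $X=\cT(F(X)+\varphi)=F(X)+\varphi$. Since $F$ is linear, the required Lipschitz continuity is just boundedness, which I would verify directly: the drift part is controlled by the Cauchy--Schwarz inequality using $A\in L^2(\triangle_2(0,T))$, and the diffusion part by the It\^o isometry together with Fubini's theorem using $C\in\sL^2(\triangle_2(0,T))$, giving $\|F(X)\|_{L^2_\bF(t_0,T)}\leq L\|X\|_{L^2_\bF(t_0,T)}$ with $L=\|A\|_{L^2(\triangle_2(0,T))}+\|C\|_{\sL^2(\triangle_2(0,T))}$. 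By linearity, the a priori estimate demanded by \cref{lemm_method_of_continuation} reduces to the claim that there is a constant $K$, \emph{independent of} $\lambda\in[0,1]$, such that every $X$ solving $X=\lambda F(X)+\varphi$ obeys $\|X\|_{L^2_\bF(t_0,T)}\leq K\|\varphi\|_{L^2_\bF(t_0,T)}$. This a priori estimate is the core of the proof; once it is in hand, \cref{lemm_method_of_continuation} yields existence and uniqueness at once, and the estimate itself is the asserted bound.

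To prove the a priori estimate I would first fix the partition $0=U_0<U_1<\cdots<U_m=T$ from the statement. Its existence follows by taking a common refinement of two partitions: one on which the triangular integrals $\int^{U_{i+1}}_{U_i}\!\int^t_{U_i}|A(t,s)|^2\rd s\rd t$ are small, available by absolute continuity of the integral of $|A|^2\in L^1(\triangle_2(0,T))$, and one furnished by the very definition of $\sL^2(\triangle_2(0,T))$, controlling $\operatorname*{ess\,sup}_{t}\int^{U_{i+1}}_t|C(s,t)|^2\rd s$; the refinement is arranged so the displayed smallness condition ($\leq 1/2$ on each block) holds, which determines $m$. On each interval $I_i:=(U_i,U_{i+1})$ I would split the integrals in $X=\lambda F(X)+\varphi$ into the \emph{diagonal} block $\int^t_{U_i}$ and the \emph{off-diagonal} block $\int^{U_i}_{t_0}$. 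The diagonal block defines a local Volterra operator $\Phi_i$ on $L^2_\bF(I_i;\bR^d)$ whose drift and diffusion norms are bounded by the two partition quantities, so $\|\Phi_i\|\leq 1/2$ and hence $\|(\mathrm{id}-\lambda\Phi_i)^{-1}\|\leq 2$ uniformly in $\lambda\in[0,1]$; the off-diagonal block, estimated by the same Cauchy--Schwarz and It\^o-isometry/Fubini computations but now with the full norms of $A$ and $C$, is bounded by $L\,\|X\|_{L^2_\bF(t_0,U_i)}$.

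Writing $x_i:=\|X\|_{L^2_\bF(I_i)}$ and inverting $\mathrm{id}-\lambda\Phi_i$ on each block gives the recursion $x_i\leq 2\|\varphi\|_{L^2_\bF(I_i)}+2L\sum_{k<i}x_k$, using $\|X\|_{L^2_\bF(t_0,U_i)}\leq\sum_{k<i}x_k$. A discrete Gronwall argument on the partial sums $S_i:=\sum_{k\leq i}x_k$, which satisfy $S_i\leq(1+2L)S_{i-1}+2\|\varphi\|_{L^2_\bF(I_i)}$, then sums these local bounds into $\|X\|_{L^2_\bF(t_0,T)}\leq S_{m-1}\leq K\|\varphi\|_{L^2_\bF(t_0,T)}$; carrying the constants through the iteration (with $\|\varphi\|_{L^2_\bF(I_i)}\leq\|\varphi\|_{L^2_\bF(t_0,T)}$) produces an explicit constant of the stated form $K=2m^2(1+2L)^{m-1}$. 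The main obstacle is exactly this a priori step: the singular diffusion coefficient $C$ resists a naive time-Gronwall, and it is the $\sL^2$-structure---which lets the diagonal diffusion block be made contractive via the It\^o isometry and Fubini---that makes the partition argument succeed and keeps the constant uniform in $\lambda$.
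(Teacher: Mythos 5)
Your proposal is correct and follows essentially the same route as the paper: both apply \cref{lemm_method_of_continuation} with $\cT=\mathrm{id}$ and the linear Volterra operator $F$, establish the a priori estimate by splitting each partition block into a contractive diagonal part (norm $\le 1/2$, uniformly in $\lambda$) and an off-diagonal part bounded by $L\|X\|_{L^2_\bF(t_0,U_i)}$, and then run the discrete Gronwall recursion to obtain the constant $2m^2(1+2L)^{m-1}$. The only cosmetic difference is that you phrase the diagonal step as inverting $\mathrm{id}-\lambda\Phi_i$ via $\|(\mathrm{id}-\lambda\Phi_i)^{-1}\|\le 2$, whereas the paper simply absorbs the term $\tfrac12\|\bar X\|_{L^2_\bF(U_i,U_{i+1})}$ into the left-hand side.
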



\begin{proof}
It suffices to show the theorem for $t_0=0$. Define $F:L^2_\bF(0,T;\bR^d)\to L^2_\bF(0,T;\bR^d)$ by
\begin{equation*}
	F(X)(t):=\int^t_0A(t,s)X(s)\rd s+\int^t_0C(t,s)X(s)\rd W(s),\ t\in(0,T),
\end{equation*}
for $X\in L^2_\bF(0,T;\bR^d)$. We prove that, for any $\varphi\in L^2_\bF(0,T;\bR^d)$, the equation $X=F(X)+\varphi$ has a unique solution $X\in L^2_\bF(0,T;\bR^d)$. It is easy to see that $F$ is a bounded linear operator on $L^2_\bF(0,T;\bR^d)$. We prove the a priori estimate. Let $L:=\|A\|_{L^2(\triangle_2(0,T))}+\|C\|_{\sL^2(\triangle_2(0,T))}$ and $0=U_0<U_1<\cdots<U_m=T$ be a partition of $(0,T)$ satisfying the requirement in the assertion. We emphasize that such a partition exists since $A\in L^2(\triangle_2(0,T);\bR^{d\times d})$ and $C\in\sL^2(\triangle_2(0,T);\bR^{d\times d})$. Let $\varphi_1,\varphi_2\in L^2_\bF(0,T;\bR^d)$ and $\lambda\in[0,1]$ be fixed, and assume that $X_1,X_2\in L^2_\bF(0,T;\bR^d)$ satisfy $X_i=\lambda F(X_i)+\varphi_i$ for $i=1,2$. Define $\bar{X}:=X_1-X_2$ and $\bar{\varphi}:=\varphi_1-\varphi_2$. Then we have $\bar{X}=\lambda F(\bar{X})+\bar{\varphi}$. For $i\in\{0,1,\dots,m-1\}$, we have
\begin{equation*}
	\bar{X}(t)=\bar{\varphi}(t)+\lambda\{F^1_i(\bar{X})(t)+F^2_i(\bar{X})(t)\},\ t\in(U_i,U_{i+1}),
\end{equation*}
where
\begin{align*}
	&F^1_i(\bar{X})(t):=\int^{U_i}_0A(t,s)\bar{X}(s)\rd s+\int^{U_i}_0C(t,s)\bar{X}(s)\rd W(s),\ t\in(U_i,U_{i+1}),\\
	&F^2_i(\bar{X})(t):=\int^t_{U_i}A(t,s)\bar{X}(s)\rd s+\int^t_{U_i}C(t,s)\bar{X}(s)\rd W(s),\ t\in(U_i,U_{i+1}).
\end{align*}
It is easy to see that
\begin{equation*}
	\|F^1_i(\bar{X})\|_{L^2_\bF(U_i,U_{i+1})}\leq L\|\bar{X}\|_{L^2_\bF(0,U_i)},\ \|F^2_i(\bar{X})\|_{L^2_\bF(U_i,U_{i+1})}\leq\frac{1}{2}\|\bar{X}\|_{L^2_\bF(U_i,U_{i+1})}.
\end{equation*}
Thus, we have
\begin{equation*}
	\|\bar{X}\|_{L^2_\bF(U_i,U_{i+1})}\leq 2\|\bar{\varphi}\|_{L^2_\bF(U_i,U_{i+1})}+2L\|\bar{X}\|_{L^2_\bF(0,U_i)}\leq 2\|\bar{\varphi}\|_{L^2_\bF(0,T)}+2L\sum^{i-1}_{j=0}\|\bar{X}\|_{L^2_\bF(U_j,U_{j+1})}.
\end{equation*}
By the discrete Gronwall inequality, we have
\begin{equation*}
	\max_{i\in\{0,1,\dots,m-1\}}\|\bar{X}\|_{L^2_\bF(U_i,U_{i+1})}\leq2\|\bar{\varphi}\|_{L^2_\bF(0,T)}\sum^m_{j=1}(1+2L)^{j-1}\leq 2m(1+2L)^{m-1}\|\bar{\varphi}\|_{L^2_\bF(0,T)},
\end{equation*}
and thus
\begin{equation*}
	\|\bar{X}\|_{L^2_\bF(0,T)}\leq\sum^{m-1}_{i=0}\|\bar{X}\|_{L^2_\bF(U_i,U_{i+1})}\leq2m^2(1+2L)^{m-1}\|\bar{\varphi}\|_{L^2_\bF(0,T)}.
\end{equation*}
Therefore, the a priori estimate holds. By \cref{lemm_method_of_continuation}, for any $\varphi\in L^2_\bF(0,T;\bR^d)$, the SVIE $X=F(X)+\varphi$ has a unique solution $X\in L^2_\bF(0,T;\bR^d)$.
\end{proof}

The following technical lemma reveals the structure of the causal feedback solution of a linear controlled SVIE.


\begin{lemm}\label{lemm_closed-loop_transform}
Let $(\Xi,\Gamma,v)\in\cS(0,T)$ be fixed. For each $(t_0,x)\in\cI$, a triplet $(\Xi^{t_0,x},\Theta^{t_0,x},u^{t_0,x})\in L^2_\bF(t_0,T;\bR^d)\times L^2_{\bF,\mathrm{c}}(\triangle_2(t_0,T);\bR^d)\times\cU(t_0,T)$ is a causal feedback solution to the controlled SVIE \eqref{eq_state} if and only if the following holds:
\begin{equation}\label{eq_closed-loop_transform}
	\begin{dcases}
	\begin{pmatrix}X^{t_0,x}(t)\\u^{t_0,x}(t)\end{pmatrix}=\bX^{t_0,x}(t),\ t\in(t_0,T),\\
	\Theta^{t_0,x}(s,t)=x(s)+\int^t_{t_0}\{(A(s,r),B(s,r))\bX^{t_0,x}(r)+b(s,r)\}\rd r\\
	\hspace{3cm}+\int^t_{t_0}\{(C(s,r),D(s,r))\bX^{t_0,x}(r)+\sigma(s,r)\}\rd W(r),\ (s,t)\in\triangle_2(t_0,T),
	\end{dcases}
\end{equation}
where $\bX^{t_0,x}\in L^2_\bF(t_0,T;\bR^{d+\ell})$ is the solution to the SVIE
\begin{equation}\label{eq_closed-loop_system'}
	\bX^{t_0,x}(t)=\Phi(t)+\int^t_{t_0}\bA(t,s)\bX^{t_0,x}(s)\rd s+\int^t_{t_0}\bC(t,s)\bX^{t_0,x}(s)\rd W(s),\ t\in(t_0,T).
\end{equation}
Here, $\Phi\in L^2_\bF(t_0,T;\bR^{d+\ell})$, $\bA\in L^2(\triangle_2(0,T);\bR^{(d+\ell)\times(d+\ell)})$ and $\bC\in\sL^2(\triangle_2(0,T);\bR^{(d+\ell)\times(d+\ell)})$ are defined by
\begin{align*}
	&\Phi(t):=\begin{pmatrix}x(t)+\int^t_{t_0}b(t,s)\rd s+\int^t_{t_0}\sigma(t,s)\rd W(s)\\x^{\Xi,\Gamma}(t)+\int^t_{t_0}b^{\Xi,\Gamma}(t,s)\rd s+\int^t_{t_0}\sigma^{\Xi,\Gamma}(t,s)\rd W(s)+v(t)\end{pmatrix},\ t\in(t_0,T),\\
	&\bA(t,s):=\begin{pmatrix}A(t,s)&B(t,s)\\A^{\Xi,\Gamma}(t,s)&B^{\Xi,\Gamma}(t,s)\end{pmatrix},\ \bC(t,s):=\begin{pmatrix}C(t,s)&D(t,s)\\C^{\Xi,\Gamma}(t,s)&D^{\Xi,\Gamma}(t,s)\end{pmatrix},\ (t,s)\in\triangle_2(0,T),
\end{align*}
with
\begin{equation*}
	x^{\Xi,\Gamma}(t):=\Xi(t)x(t)+\int^T_t\Gamma(s,t)x(s)\rd s,\ t\in(t_0,T),
\end{equation*}
and
\begin{equation*}
	f^{\Xi,\Gamma}(t,s):=\Xi(t)f(t,s)+\int^T_t\Gamma(r,t)f(r,s)\rd r,\ (t,s)\in\triangle_2(0,T),
\end{equation*}
for $f=b,\sigma,A,B,C,D$.
\end{lemm}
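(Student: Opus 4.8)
The plan is to treat the closed-loop system \eqref{eq_closed-loop_system} as three separate relations — the state equation for $X^{t_0,x}$, the forward-state equation for $\Theta^{t_0,x}$, and the feedback law for $u^{t_0,x}$ — and to notice that, under the identification $\bX^{t_0,x}=(X^{t_0,x},u^{t_0,x})^\top$, the first two relations are tautologically the first component of \eqref{eq_closed-loop_system'} and the second line of \eqref{eq_closed-loop_transform}, respectively (since $(A(s,r),B(s,r))\bX^{t_0,x}(r)=A(s,r)X^{t_0,x}(r)+B(s,r)u^{t_0,x}(r)$ and similarly for $(C,D)$, while the top row of $\Phi$ is $x(t)+\int^t_{t_0}b(t,s)\rd s+\int^t_{t_0}\sigma(t,s)\rd W(s)$ and the top rows of $\bA,\bC$ are $(A(t,s),B(t,s))$ and $(C(t,s),D(t,s))$). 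Consequently the whole equivalence reduces to the single statement: granted that $X^{t_0,x}$ solves the state equation and $\Theta^{t_0,x}$ is given by the forward-state equation, the feedback law for $u^{t_0,x}$ holds if and only if the \emph{second} component of \eqref{eq_closed-loop_system'} holds.

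To establish this, I would substitute the forward-state expression for $\Theta^{t_0,x}(s,t)$ into the feedback law $u^{t_0,x}(t)=\Xi(t)X^{t_0,x}(t)+\int^T_t\Gamma(s,t)\Theta^{t_0,x}(s,t)\rd s+v(t)$ and also expand $\Xi(t)X^{t_0,x}(t)$ through the state equation. For fixed $t$ the outer deterministic integral $\int^T_t\Gamma(s,t)(\cdot)\rd s$ must then be interchanged with the inner Lebesgue integral $\int^t_{t_0}(\cdot)\rd r$ and the inner It\^o integral $\int^t_{t_0}(\cdot)\rd W(r)$ — the first by ordinary Fubini, the second by the stochastic Fubini theorem. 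Collecting the coefficients of $X^{t_0,x}(r)$ and $u^{t_0,x}(r)$ in the resulting drift and diffusion terms yields precisely $\Xi(t)A(t,s)+\int^T_t\Gamma(r,t)A(r,s)\rd r=A^{\Xi,\Gamma}(t,s)$ and its analogues $B^{\Xi,\Gamma},C^{\Xi,\Gamma},D^{\Xi,\Gamma}$, while the inhomogeneous parts collapse to $x^{\Xi,\Gamma},b^{\Xi,\Gamma},\sigma^{\Xi,\Gamma}$; this is exactly the second component of \eqref{eq_closed-loop_system'}. Since each interchange is an identity, it is reversible, so the ``if and only if'' is obtained simultaneously in both directions.

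Two preparatory points support this computation. First, the integrability of the transformed data: using the Cauchy--Schwarz inequality together with $\|\Xi\|_{L^\infty(0,T)}<\infty$ and $\Gamma\in L^2(\triangle_2(0,T);\bR^{\ell\times d})$ one checks $A^{\Xi,\Gamma},B^{\Xi,\Gamma}\in L^2(\triangle_2(0,T))$ and that $x^{\Xi,\Gamma},b^{\Xi,\Gamma},\sigma^{\Xi,\Gamma}$ lie in the spaces making $\Phi\in L^2_\bF(t_0,T;\bR^{d+\ell})$ (here the hypotheses $b\in L^{2,1}_\bF(\triangle_2)$, $\sigma\in L^2_\bF(\triangle_2)$ of \cref{assum_coefficients} and the It\^o isometry are used). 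Second, the stochastic Fubini step is legitimate because, for a.e.\ $t$, one has $\int^T_t|\Gamma(s,t)|\bigl(\int^t_{t_0}\bE|C(s,r)X^{t_0,x}(r)+D(s,r)u^{t_0,x}(r)+\sigma(s,r)|^2\rd r\bigr)^{1/2}\rd s<\infty$, which follows from Cauchy--Schwarz, $\Gamma(\cdot,t)\in L^2(t,T)$, $C,D\in\sL^2(\triangle_2)$, $\sigma\in L^2_\bF(\triangle_2)$, and $X^{t_0,x}\in L^2_\bF(t_0,T)$, $u^{t_0,x}\in\cU(t_0,T)$.

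The main obstacle I expect is verifying that the transformed diffusion kernel $\bC$ remains in the \emph{singular} class $\sL^2(\triangle_2(0,T))$, and not merely in $L^2(\triangle_2(0,T))$, so that \eqref{eq_closed-loop_system'} is covered by \cref{lemm_SVIE_general}. The multiplicative part $\Xi(t)C(t,s)$ inherits both the essential-supremum bound and the partition (local-smallness) condition of the $\sL^2$-norm directly from $C$, since $\Xi$ is essentially bounded. The convolution-type part $\int^T_t\Gamma(r,t)C(r,s)\rd r$ is more delicate: its $\sL^2$ essential-supremum bound follows from Cauchy--Schwarz as $\|C\|_{\sL^2(\triangle_2)}\|\Gamma\|_{L^2(\triangle_2)}$, but the partition condition additionally requires splitting the $r$-integral at the partition points and invoking the absolute continuity of the Lebesgue integral of the $L^1$-function $t\mapsto\int^T_t|\Gamma(r,t)|^2\rd r$ to make the contribution over each short subinterval small; the same reasoning handles $D^{\Xi,\Gamma}$.
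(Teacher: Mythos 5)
Your proposal is correct and follows essentially the same route as the paper: identify $\bX^{t_0,x}=(X^{t_0,x},u^{t_0,x})^\top$, substitute the state and forward-state equations into the feedback law, and interchange the $\Gamma$-integral with the Lebesgue and It\^o integrals via (stochastic) Fubini to recognize the second block row of \eqref{eq_closed-loop_system'}, with the reverse direction following by the same reversible computation. The paper dispatches the membership $\bC\in\sL^2(\triangle_2(0,T);\bR^{(d+\ell)\times(d+\ell)})$ with a one-line appeal to Cauchy--Schwarz, so your more careful verification of the partition (local-smallness) condition via the absolute continuity of $t\mapsto\int^T_t|\Gamma(r,t)|^2\rd r$ is a welcome elaboration rather than a deviation.
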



\begin{proof}
By using the Cauchy--Schwarz inequality, we can easily show that
\begin{equation*}
	\Phi\in L^2_\bF(t_0,T;\bR^{d+\ell}),\ \bA\in L^2(\triangle_2(0,T);\bR^{(d+\ell)\times(d+\ell)})\ \text{and}\ \bC\in\sL^2(\triangle_2(0,T);\bR^{(d+\ell)\times(d+\ell)}).
\end{equation*}

Let $(X^{t_0,x},\Theta^{t_0,x},u^{t_0,x})\in L^2_\bF(t_0,T;\bR^d)\times L^2_{\bF,\mathrm{c}}(\triangle_2(t_0,T);\bR^d)\times\cU(t_0,T)$ be a causal feedback solution to the controlled SVIE \eqref{eq_state} at $(t_0,x)\in\cI$ corresponding to the causal feedback strategy $(\Xi,\Gamma,v)\in\cS(0,T)$. By using the (stochastic) Fubini theorem, we have
\begin{align*}
	u^{t_0,x}(t)&=\Xi(t)X^{t_0,x}(t)+\int^T_t\Gamma(s,t)\Theta^{t_0,x}(s,t)\rd s+v(t)\\
	&=\Xi(t)x(t)+\Xi(t)\int^t_{t_0}\{A(t,s)X^{t_0,x}(s)+B(t,s)u^{t_0,x}(s)+b(t,s)\}\rd s\\
	&\hspace{0.5cm}+\Xi(t)\int^t_{t_0}\{C(t,s)X^{t_0,x}(s)+D(t,s)u^{t_0,x}(s)+\sigma(t,s)\}\rd W(s)\\
	&\hspace{0.5cm}+\int^T_t\Gamma(s,t)x(s)\rd s+\int^T_t\Gamma(s,t)\int^t_{t_0}\{A(s,r)X^{t_0,x}(r)+B(s,r)u^{t_0,x}(r)+b(s,r)\}\rd r\rd s\\
	&\hspace{0.5cm}+\int^T_t\Gamma(s,t)\int^t_{t_0}\{C(s,r)X^{t_0,x}(r)+D(s,r)u^{t_0,x}(r)+\sigma(s,r)\}\rd W(r)\rd s+v(t)\\
	&=x^{\Xi,\Gamma}(t)+\int^t_{t_0}b^{\Xi,\Gamma}(t,s)\rd s+\int^t_{t_0}\sigma^{\Xi,\Gamma}(t,s)\rd W(s)+v(t)\\
	&\hspace{0.5cm}+\int^t_{t_0}(A^{\Xi,\Gamma}(t,s),B^{\Xi,\Gamma}(t,s))\begin{pmatrix}X^{t_0,x}(s)\\u^{t_0,x}(s)\end{pmatrix}\rd s+\int^t_{t_0}(C^{\Xi,\Gamma}(t,s),D^{\Xi,\Gamma}(t,s))\begin{pmatrix}X^{t_0,x}(s)\\u^{t_0,x}(s)\end{pmatrix}\rd W(s)
\end{align*}
for a.e. $t\in(t_0,T)$, a.s. Therefore, the process $\bX^{t_0,x}\in L^2_\bF(t_0,T;\bR^{d+\ell})$ defined by the first equality in \eqref{eq_closed-loop_transform} satisfies the SVIE \eqref{eq_closed-loop_system'}, and the second equality in \eqref{eq_closed-loop_transform} holds.

Conversely, if $\bX^{t_0,x}\in L^2_\bF(t_0,T;\bR^{d+\ell})$ solves the SVIE \eqref{eq_closed-loop_system'}, then the triplet $(X^{t_0,x},\Theta^{t_0,x},u^{t_0,x})\in L^2_\bF(t_0,T;\bR^d)\times L^2_{\bF,\mathrm{c}}(\triangle_2(t_0,T);\bR^d)\times\cU(t_0,T)$ defined by \eqref{eq_closed-loop_transform} is the causal feedback solution of the controlled SVIE \eqref{eq_state}. This completes the proof.
\end{proof}

Now we are ready to prove \cref{theo_SVIE}.


\begin{proof}[Proof of \cref{theo_SVIE}]
By \cref{lemm_SVIE_general}, the SVIE \eqref{eq_closed-loop_system'} admits a unique solution $\bX^{t_0,x}\in L^2_\bF(t_0,T;\bR^{d+\ell})$. By \cref{lemm_closed-loop_transform}, the causal feedback solution $(X^{t_0,x},\Theta^{t_0,x},u^{t_0,x})\in L^2_\bF(t_0,T;\bR^d)\times L^2_{\bF,\mathrm{c}}(\triangle_2(t_0,T);\bR^d)\times\cU(t_0,T)$ to the controlled SVIE \eqref{eq_state} uniquely exists. Furthermore, the following holds:
\begin{equation*}
	\|\bX^{t_0,x}\|_{L^2_\bF(t_0,T)}\leq 2m^2(1+2L)\|\Phi\|_{L^2_\bF(t_0,T)}
\end{equation*}
with $L:=\|\bA\|_{L^2(\triangle_2(0,T))}+\|\bC\|_{\sL^2(\triangle_2(0,T))}$ and $m$ being a number of division intervals $0=U_0<U_1<\cdots<U_m=T$ of $(0,T)$ satisfying
\begin{equation*}
	\Bigl(\int^{U_{i+1}}_{U_i}\!\!\int^t_{U_i}|\bA(t,s)|^2\rd s\!\rd t\Bigr)^{1/2}+\underset{t\in(U_i,U_{i+1})}{\mathrm{ess\,sup}}\Bigl(\int^{U_{i+1}}_t|\bC(s,t)|^2\rd s\Bigr)^{1/2}\leq\frac{1}{2}
\end{equation*}
for $i=0,1,\dots,m-1$. Observe that
\begin{equation*}
	\|\Phi\|_{L^2(t_0,T)}\leq(1+\|\Xi\|_{L^\infty(0,T)}+\|\Gamma\|_{L^2(\triangle_2(0,T))})(\|x\|_{L^2(t_0,T)}+\|b\|_{L^{2,1}_\bF(\triangle_2(t_0,T))}+\|\sigma\|_{L^2_\bF(\triangle_2(t_0,T))})+\|v\|_{L^2_\bF(t_0,T)}.
\end{equation*}
Furthermore, noting the second equality in \eqref{eq_closed-loop_transform} and using Doob's inequality, we have
\begin{align*}
	&\|\Theta^{t_0,x}\|_{L^2_{\bF,\mathrm{c}}(\triangle_2(t_0,T))}=\bE\Bigl[\int^T_{t_0}\sup_{t\in[t_0,s]}|\Theta^{t_0,x}(s,t)|^2\rd s\Bigr]^{1/2}\\
	&\leq\|x\|_{L^2(t_0,T)}+(\|A\|_{L^2(\triangle_2(0,T))}+\|B\|_{L^2(\triangle_2(0,T))})\|\bX^{t_0,x}\|_{L^2_\bF(t_0,T)}+\|b\|_{L^{2,1}_\bF(\triangle_2(t_0,T))}\\
	&\hspace{0.5cm}+2(\|C\|_{\sL^2(\triangle_2(0,T))}+\|D\|_{\sL^2(\triangle_2(0,T))})\|\bX^{t_0,x}\|_{L^2_\bF(t_0,T)}+2\|\sigma\|_{L^2_\bF(\triangle_2(t_0,T))}.
\end{align*}
Therefore, the estimate \eqref{eq_closed-loop_estimate} holds. This completes the proof.
\end{proof}


\begin{rem}\label{rem_universal_constant}
From the above proof, the constant $K>0$ in the estimate \eqref{eq_closed-loop_estimate} can be chosen to be uniform with respect to small scaling of the coefficients. More precisely, for each $(\Xi,\Gamma,v)\in\cS(0,T)$ and $(t_0,x)$, the corresponding causal feedback solution $(X^{t_0,x},\Theta^{t_0,x},u^{t_0,x})$ to the controlled SVIE \eqref{eq_state} with the coefficients $A,B,C,D$ replaced by $\lambda_AA,\lambda_BB,\lambda_CC,\lambda_DD$ for some constants $\lambda_A,\lambda_B,\lambda_C,\lambda_D\in[0,1]$ satisfies the estimate \eqref{eq_closed-loop_estimate} with the same constant $K>0$.
\end{rem}


\subsection{Proof of \cref{theo_Type-II_EBSVIE}: Type-II EBSVIEs}\label{appendix_Type-II_EBSVIE}

First, we prove the well-posedness of a trivial Type-II EBSVIE where $\Xi$ and $\Gamma$ vanish.


\begin{lemm}\label{lemm_Type-II_EBSVIE}
For any $(\psi,\chi)\in\cY:=L^2_\bF(0,T;\bR^d)\times L^{2,1}_\bF(\triangle_2(0,T);\bR^d)$, there exists a unique adapted solution $(\eta,\zeta)\in\cX:=L^2_{\bF,\mathrm{c}}(\triangle_2(0,T);\bR^d)\times L^2_\bF(\triangle_2(0,T);\bR^d)$ to the trivial Type-II EBSVIE
\begin{equation*}
	\begin{dcases}
	\mathrm{d}\eta(t,s)=-\chi(t,s)\rd s+\zeta(t,s)\rd W(s),\ (t,s)\in\triangle_2(0,T),\\
	\eta(t,t)=\psi(t),\ t\in(0,T).
	\end{dcases}
\end{equation*}
Furthermore, the solution map $\cT:(\psi,\chi)\mapsto(\eta,\zeta)$ is a bounded linear operator from $\cY$ to $\cX$.
\end{lemm}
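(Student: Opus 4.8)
The plan is to use the fact that, for each fixed $t\in(0,T)$, the trivial Type-II EBSVIE decouples into a standard linear backward SDE on $[0,t]$ whose terminal datum $\psi(t)$ is $\cF_t$-measurable and whose driver $\chi(t,\cdot)$ depends on neither $\eta$ nor $\zeta$. First I would exhibit the solution explicitly. Setting $\xi(t):=\psi(t)+\int^t_0\chi(t,r)\rd r$, which is $\cF_t$-measurable and (by the $L^{2,1}_\bF$-integrability of $\chi$ and the $L^2_\bF$-integrability of $\psi$) square-integrable for a.e.\ $t$, I would define
\[
	\eta(t,s):=\bE_s[\xi(t)]-\int^s_0\chi(t,r)\rd r,\quad (t,s)\in\triangle_2(0,T),
\]
and let $\zeta(t,\cdot)$ be the integrand in the martingale representation $\bE_s[\xi(t)]=\bE[\xi(t)]+\int^s_0\zeta(t,r)\rd W(r)$. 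Taking $\bE_\theta$ of the integral form and using that $\int^t_\theta\zeta(t,s)\rd W(s)$ has zero $\cF_\theta$-conditional mean confirms that this pair solves the defining equation, with $\eta(t,t)=\xi(t)-\int^t_0\chi(t,r)\rd r=\psi(t)$.

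Next I would check that $(\eta,\zeta)\in\cX$ and establish the operator bound at the same time. For $\zeta$, the It\^o isometry yields $\bE[\int^t_0|\zeta(t,s)|^2\rd s]=\bE[|\xi(t)|^2]-|\bE[\xi(t)]|^2\leq\bE[|\xi(t)|^2]$; integrating in $t$ and using $\bE[|\xi(t)|^2]\leq 2\bE[|\psi(t)|^2]+2\bE[(\int^t_0|\chi(t,r)|\rd r)^2]$ bounds $\|\zeta\|_{L^2_\bF(\triangle_2(0,T))}$ by $K(\|\psi\|_{L^2_\bF(0,T)}+\|\chi\|_{L^{2,1}_\bF(\triangle_2(0,T))})$. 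For $\eta$, Doob's maximal inequality applied to the martingale $s\mapsto\bE_s[\xi(t)]$, together with $\sup_{s\in[0,t]}|\int^s_0\chi(t,r)\rd r|\leq\int^t_0|\chi(t,r)|\rd r$, controls $\bE[\sup_{s\in[0,t]}|\eta(t,s)|^2]$; integrating in $t$ gives the $L^2_{\bF,\mathrm{c}}$ bound. The continuity of $s\mapsto\eta(t,s)$ on $[0,t]$ with one-sided limits at the endpoints follows because the Brownian martingale admits a continuous modification and the drift is absolutely continuous in $s$.

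Uniqueness is immediate: the difference of two solutions solves the equation with $(\psi,\chi)=(0,0)$, so taking $\bE_\theta$ of the integral form forces $\eta\equiv0$, after which $\int^t_\theta\zeta(t,s)\rd W(s)\equiv0$ and the It\^o isometry force $\zeta\equiv0$. Linearity of $\cT$ is transparent from the explicit formula, and the two estimates above give its boundedness, so $\cT\colon\cY\to\cX$ is a bounded linear operator.

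The step I expect to require the most care is the joint measurability of $\zeta$ in $(t,s)$: for each fixed $t$ the representation integrand $\zeta(t,\cdot)$ is determined only up to a null set, so I would argue that $t\mapsto\xi(t)$ is measurable from $(0,T)$ into $L^2(\Omega;\bR^d)$ and that the martingale-representation map is a bounded linear isometry, whence $t\mapsto\zeta(t,\cdot)$ is continuous---hence measurable---as a map into $L^2_\bF(0,t;\bR^d)$; this is the standard device for producing jointly measurable adapted solutions in the BSVIE literature.
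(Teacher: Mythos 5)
Your construction is the paper's own: since $\int^s_0\chi(t,r)\rd r$ is $\cF_s$-measurable, your $\eta(t,s)=\bE_s[\xi(t)]-\int^s_0\chi(t,r)\rd r$ coincides with the paper's $\bE_s[\psi(t)+\int^t_s\chi(t,r)\rd r]$, and the martingale-representation definition of $\zeta$, the It\^{o}-isometry bound for $\zeta$, and the Doob-inequality bound for $\eta$ are exactly the steps in the paper's proof. The proposal is correct and follows essentially the same route; your closing remark on joint measurability of $\zeta$ in $(t,s)$ is a point the paper leaves implicit, and your treatment of it is the standard and appropriate one.
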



\begin{proof}
We construct the adapted solution. Define $\eta\in L^2_{\bF,\mathrm{c}}(\triangle_2(0,T);\bR^d)$ by
\begin{equation*}
	\eta(t,s):=\bE_s\Bigl[\psi(t)+\int^t_s\chi(t, r)\rd  r\Bigr],\ (t,s)\in\triangle_2(0,T).
\end{equation*}
Also, for a.e.\ $t\in(0,T)$, define $\zeta(t,\cdot)\in L^2_\bF(0,t;\bR^d)$ via the martingale representation theorem:
\begin{equation*}
	\psi(t)+\int^t_0\chi(t, r)\rd  r=\bE\Bigl[\psi(t)+\int^t_0\chi(t, r)\rd  r\Bigr]+\int^t_0\zeta(t,s)\rd W(s).
\end{equation*}
Then we have $\eta(t,t)=\psi(t)$, $t\in(0,T)$, and
\begin{equation*}
	\eta(t,s)=\eta(t,t)+\int^t_s\chi(t, r)\rd  r-\int^t_s\zeta(t, r)\rd W( r),\ (t,s)\in\triangle_2(0,T).
\end{equation*}
Thus, the pair $(\eta,\zeta)\in\cX=L^2_{\bF,\mathrm{c}}(\triangle_2(0,T);\bR^d)\times L^2_\bF(\triangle_2(0,T);\bR^d)$ is an adapted solution to the trivial Type-II EBSVIE. From the construction, we get
\begin{align*}
	\|\zeta\|_{L^2_\bF(\triangle_2(0,T))}&=\bE\Bigl[\int^T_0\!\!\int^t_0|\zeta(t,s)|^2\rd s\!\rd t\Bigr]^{1/2}=\bE\Bigl[\int^T_0\Bigl|\int^t_0\zeta(t,s)\rd W(s)\Bigr|^2\rd t\Bigr]^{1/2}\\
	&\leq\bE\Bigl[\int^T_0\Bigl|\psi(t)+\int^t_0\chi(t,s)\rd s\Bigr|^2\rd t\Bigr]^{1/2}\\
	&\leq\|\psi\|_{L^2_\bF(0,T;\bR^d)}+\|\chi\|_{L^{2,1}_\bF(\triangle_2(0,T))}.
\end{align*}
Also, by Doob's inequality,
\begin{align*}
	\|\eta\|_{L^2_{\bF,\mathrm{c}}(\triangle_2(0,T))}&=\bE\Bigl[\int^T_0\sup_{s\in[0,t]}|\eta(t,s)|^2\rd t\Bigr]^{1/2}=\Bigl(\int^T_0\bE\Bigl[\sup_{s\in[0,t]}|\eta(t,s)|^2\Bigr]\rd t\Bigr)^{1/2}\\
	&=\Bigl(\int^T_0\bE\Bigl[\sup_{s\in[0,t]}\Bigl|\bE_s\Bigl[\psi(t)+\int^t_s\chi(t, r)\rd r\Bigr]\Bigr|^2\Bigr]\rd t\Bigr)^{1/2}\\
	&\leq2\|\psi\|_{L^2_\bF(0,T)}+2\|\chi\|_{L^{2,1}_\bF(\triangle_2(0,T))}.
\end{align*}
The uniqueness of the adapted-solution and the linearity of the solution map $\cT:\cY\to\cX$ are clear. The boundedness of $\cT$ follows from the above estimates. This completes the proof.
\end{proof}


\begin{proof}[Proof of \cref{theo_Type-II_EBSVIE}]
We first prove the duality principle \eqref{eq_duality}. After that, we provide a priori estimate of the adapted solution of Type-II EBSVIE \eqref{eq_Type-II_EBSVIE} by using the duality principle and the estimate \eqref{eq_closed-loop_estimate} of the causal feedback solution to the controlled SVIE. Then, the existence and uniqueness of the adapted solution will follow from \cref{lemm_method_of_continuation}.

\underline{The duality principle \eqref{eq_duality}:} Suppose that $(\eta,\zeta)\in L^2_{\bF,\mathrm{c}}(\triangle_2(0,T);\bR^d)\times L^2_\bF(\triangle_2(0,T);\bR^d)$ is an adapted solution to the Type-II EBSVIE \eqref{eq_Type-II_EBSVIE}. For each $(t_0,x)\in\cI$ and $v\in\cU(0,T)$, denote by $(X,\Theta,u):=(X^{t_0,x},\Theta^{t_0,x},u^{t_0,x})\in L^2_\bF(t_0,T;\bR^d)\times L^2_{\bF,\mathrm{c}}(\triangle_2(t_0,T);\bR^d)\times\cU(t_0,T)$ the causal feedback solution to the controlled SVIE \eqref{eq_state} at $(t_0,x)\in\cI$ corresponding to the causal feedback strategy $(\Xi,\Gamma,v)\in\cS(0,T)$. By the second equality in \eqref{eq_Type-II_EBSVIE}, we have
\begin{equation}\label{eq_psiX}
\begin{split}
	&\bE\Bigl[\int^T_{t_0}\langle\psi(t),X(t)\rangle\rd t\Bigr]\\
	&=\bE\Bigl[\int^T_{t_0}\Bigl\{\langle\eta(t,t),X(t)\rangle-\Bigl\langle\int^T_t(A+B\triangleright\Xi)(s,t)^\top\eta(s,t)\rd s+\int^T_t(C+D\triangleright\Xi)(s,t)^\top\zeta(s,t)\rd s,X(t)\Bigr\rangle\Bigr\}\rd t\Bigr].
\end{split}
\end{equation}
For a.e.\ $t\in(t_0,T)$, noting that $\Theta(t,t)=X(t)$ and applying I\^{o}'s formula to $\langle\eta(t,\cdot),\Theta(t,\cdot)\rangle$ on $(t_0,t)$, we have
\begin{align*}
	&\bE[\langle\eta(t,t),X(t)\rangle]=\bE[\langle\eta(t,t),\Theta(t,t)\rangle]\\
	&=\langle\bE[\eta(t,t_0)],x(t)\rangle\\
	&\hspace{0.5cm}+\bE\Bigl[\int^t_{t_0}\Bigl\{-\Bigl\langle\chi(t,s)+\Gamma(t,s)^\top\int^T_sB(r,s)^\top\eta(r,s)\rd r+\Gamma(t,s)^\top\int^T_sD(r,s)^\top\zeta(r,s)\rd r,\Theta(t,s)\Bigr\rangle\\
	&\hspace{3cm}+\langle\eta(t,s),A(t,s)X(s)+B(t,s)u(s)+b(t,s)\rangle\\
	&\hspace{3cm}+\langle\zeta(t,s),C(t,s)X(s)+D(t,s)u(s)+\sigma(t,s)\rangle\Bigr\}\rd s\Bigr]\\
	&=\langle\bE[\eta(t,t_0)],x(t)\rangle+\bE\Bigl[\int^t_{t_0}\{-\langle\chi(t,s),\Theta(t,s)\rangle+\langle\eta(t,s),b(t,s)\rangle+\langle\zeta(t,s),\sigma(t,s)\rangle\}\rd s\Bigr]\\
	&\hspace{0.5cm}+\bE\Bigl[\int^t_{t_0}\langle (A+B\triangleright\Xi)(t,s)^\top\eta(t,s)+(C+D\triangleright\Xi)(t,s)^\top\zeta(t,s),X(s)\rangle\rd s\Bigr]\\
	&\hspace{0.5cm}+\bE\Bigl[\int^t_{t_0}\Bigl\{\langle B(t,s)^\top\eta(t,s)+D(t,s)^\top\zeta(t,s),u(s)-\Xi(s)X(s)\rangle\\
	&\hspace{3cm}-\Bigl\langle\int^T_s\{B(r,s)^\top\eta(r,s)+D(r,s)^\top\zeta(r,s)\}\rd r,\Gamma(t,s)\Theta(t,s)\Bigr\rangle\Bigr\}\rd s\Bigr].
\end{align*}
Integrating both sides with respect to $t\in(t_0,T)$ and applying Fubini's theorem, we get
\begin{equation}\label{eq_etaX}
\begin{split}
	&\bE\Bigl[\int^T_{t_0}\langle\eta(t,t),X(t)\rangle\rd t\Bigr]\\
	&=\int^T_{t_0}\langle\bE[\eta(t,t_0)],x(t)\rangle\rd t\\
	&\hspace{0.5cm}+\bE\Bigl[\int^T_{t_0}\Bigl\{-\int^T_t\langle\chi(s,t),\Theta(s,t)\rangle\rd s+\int^T_t\langle\eta(s,t),b(s,t)\rangle\rd s+\int^T_t\langle\zeta(s,t),\sigma(s,t)\rangle\rd s\Bigr\}\rd t\Bigr]\\
	&\hspace{0.5cm}+\bE\Bigl[\int^T_{t_0}\Bigl\langle\int^T_t(A+B\triangleright\Xi)(s,t)^\top\eta(s,t)\rd s+\int^T_t(C+D\triangleright\Xi)(s,t)^\top\zeta(s,t)\rd s,X(t)\Bigr\rangle\rd t\Bigr]\\
	&\hspace{0.5cm}+\bE\Bigl[\int^T_{t_0}\Bigl\langle\int^T_t\{B(s,t)^\top\eta(s,t)+D(s,t)^\top\zeta(s,t)\}\rd s,u(t)-\Xi(t)X(t)-\int^T_t\Gamma(r,t)\Theta(r,t)\rd r\Bigr\rangle\rd t\Bigr]
\end{split}
\end{equation}
By \eqref{eq_psiX}, \eqref{eq_etaX} and the relation $u(t)=\Xi(t)X(t)+\int^T_t\Gamma(r,t)\Theta(r,t)\rd r+v(t)$, we obtain the duality principle \eqref{eq_duality}.

\underline{Existence and uniqueness of the adapted solution:} We define a map $F$ from $\cX:=L^2_{\bF,\mathrm{c}}(\triangle_2(0,T);\bR^d)\times L^2_\bF(\triangle_2(0,T);\bR^d)$ to $\cY:=L^2_\bF(0,T;\bR^d)\times L^{2,1}_\bF(\triangle_2(0,T);\bR^d)$ by
\begin{align*}
	&F(\eta,\zeta):=(F_1(\eta,\zeta),F_2(\eta,\zeta)),\\
	&F_1(\eta,\zeta)(t):=\int^T_t(A+B\triangleright\Xi)(r,t)^\top\eta(r,t)\rd r+\int^T_t(C+D\triangleright\Xi)(r,t)^\top\zeta(r,t)\rd r,\ t\in(0,T),\\
	&F_2(\eta,\zeta)(t,s):=\Gamma(t,s)^\top\int^T_sB(r,s)^\top\eta(r,s)\rd r+\Gamma(t,s)^\top\int^T_sD(r,s)^\top\zeta(r,s)\rd r,\ (t,s)\in\triangle_2(0,T),
\end{align*}
for $(\eta,\zeta)\in\cX$. It is easy to see that $F:\cX\to\cY$ is a bounded linear operator. Also, consider the bounded linear operator $\cT:\cY\to\cX$ defined in \cref{lemm_Type-II_EBSVIE}. Observe that, for each $(\psi,\chi)\in\cY$, $(\eta,\zeta)\in\cX$ is an adapted solution to the Type-II EBSVIE \eqref{eq_Type-II_EBSVIE} if and only if the following holds:
\begin{equation*}
	(\eta,\zeta)=\cT(F(\eta,\zeta)+(\psi,\chi))
\end{equation*}
In order to apply \cref{lemm_method_of_continuation}, we have to show the a priori estimate.

\underline{A priori estimate of the adapted solution:} Let $\lambda\in[0,1]$ be fixed. For each $(\psi_i,\chi_i)\in\cY$ with $i=1,2$, assume that $(\eta_i,\zeta_i)\in\cX$ satisfy the equations
\begin{equation*}
	(\eta_i,\zeta_i)=\cT(\lambda F(\eta_i,\zeta_i)+(\psi_i,\chi_i)),\ i=1,2.
\end{equation*}
Define $(\bar{\eta},\bar{\zeta}):=(\eta_1-\eta_2,\zeta_1-\zeta_2)\in\cX$ and $(\bar{\psi},\bar{\chi}):=(\psi_1-\psi_2,\chi_1-\chi_2)\in\cY$. We will show the estimate
\begin{equation*}
	\|(\bar{\eta},\bar{\zeta})\|_\cX\leq K\|(\bar{\psi},\bar{\chi})\|_\cY.
\end{equation*}
Here and the rest of this proof, $K>0$ denotes a universal constant which does not depend on $\lambda$ or $(\psi_i,\chi_i)$. Observe that $(\bar{\eta},\bar{\zeta})$ is an adapted solution to the Type-II EBSVIE \eqref{eq_Type-II_EBSVIE} with the coefficients $A,B,C,D$ replaced by $\lambda A,\lambda B,\lambda C,\lambda D$ and $(\psi,\chi)$ replaced by $(\bar{\psi},\bar{\chi})$. Let $(X_1,\Theta_1)\in L^2_\bF(0,T;\bR^d)\times L^2_{\bF,\mathrm{c}}(\triangle_2(0,T);\bR^d)$ be the causal feedback solution to the homogeneous controlled SVIE \eqref{eq_state_0} with the coefficients $(\lambda A,\lambda B,\lambda C,\lambda D)$, the zero input condition $(t_0,x)=(0,0)\in\cI$, and a causal feedback strategy $(\Xi,\Gamma,v)\in\cS(0,T)$ with $v\in\cU(0,T)$ being arbitrary. By the duality principle \eqref{eq_duality}, we have
\begin{align*}
	&\bE\Bigl[\int^T_0\Bigl\langle\int^T_t\{\lambda B(s,t)^\top\bar{\eta}(s,t)+\lambda D(s,t)^\top\bar{\zeta}(s,t)\}\rd s,v(t)\Bigr\rangle\rd t\Bigr]\\
	&=\bE\Bigl[\int^T_0\Bigl\{\langle\bar{\psi}(t),X_1(t)\rangle+\int^T_t\langle\bar{\chi}(s,t),\Theta_1(s,t)\rangle\rd s\Bigr\}\rd t\Bigr].
\end{align*}
By \cref{theo_SVIE}, noting \cref{rem_universal_constant}, the right hand side of the above equality is estimated as
\begin{align*}
	&\Bigl|\bE\Bigl[\int^T_0\Bigl\{\langle\bar{\psi}(t),X_1(t)\rangle+\int^T_t\langle\bar{\chi}(s,t),\Theta_1(s,t)\rangle\rd s\Bigr\}\rd t\Bigr]\Bigr|\\
	&\leq\|\bar{\psi}\|_{L^2_\bF(0,T)}\|X_1\|_{L^2_\bF(0,T)}+\|\bar{\chi}\|_{L^{2,1}_\bF(\triangle_2(0,T))}\|\Theta_1\|_{L^2_{\bF,\mathrm{c}}(\triangle_2(0,T))}\\
	&\leq K\|(\bar{\psi},\bar{\chi})\|_\cY\|v\|_{L^2_\bF(0,T)}.
\end{align*}
Therefore, we have the following estimate:
\begin{align*}
	&\bE\Bigl[\int^T_0\Bigl|\int^T_t\{\lambda B(s,t)^\top\bar{\eta}(s,t)+\lambda D(s,t)^\top\bar{\zeta}(s,t)\}\rd s\Bigr|^2\rd t\Bigr]^{1/2}\\
	&=\sup_{\substack{v\in L^2_\bF(0,T;\bR^\ell)\\\|v\|_{L^2_\bF(0,T)}\leq1}}\Bigl|\bE\Bigl[\int^T_0\Bigl\langle\int^T_t\{\lambda B(s,t)^\top\bar{\eta}(s,t)+\lambda D(s,t)^\top\bar{\zeta}(s,t)\}\rd s,v(t)\Bigr\rangle\rd t\Bigr]\Bigr|\\
	&\leq K\|(\bar{\psi},\bar{\chi})\|_\cY.
\end{align*}
By using the Cauchy–Schwarz inequality, we get
\begin{align*}
	\|\lambda F_2(\bar{\eta},\bar{\zeta})\|_{L^{2,1}_\bF(\triangle_2(0,T))}&\leq\|\Gamma\|_{L^2(\triangle_2(0,T))}\bE\Bigl[\int^T_0\Bigl|\int^T_t\{\lambda B(s,t)^\top\bar{\eta}(s,t)+\lambda D(s,t)^\top\bar{\zeta}(s,t)\}\rd s\Bigr|^2\rd t\Bigr]^{1/2}\\
	&\leq K\|(\bar{\psi},\bar{\chi})\|_\cY.
\end{align*}
Next, for a given $\varphi\in L^2_\bF(0,T;\bR^d)$, let $(X_2,\Theta_2)\in L^2_\bF(0,T;\bR^d)\times L^2_{\bF,\mathrm{c}}(\triangle_2(0,T);\bR^d)$ be the causal feedback solution to the controlled SVIE \eqref{eq_state} with the coefficients $(\lambda A,\lambda B,\lambda C,\lambda D)$, the inhomogeneous terms
\begin{equation*}
	b(t,s)=\lambda(A+B\triangleright\Xi)(t,s)\varphi(s),\ \sigma(t,s)=\lambda(C+D\triangleright\Xi)(t,s)\varphi(s),\ (t,s)\in\triangle_2(0,T),
\end{equation*}
the input condition $(t_0,x)=(0,0)$, and the causal feedback strategy $(\Xi,\Gamma,0)\in\cS(0,T)$. Then the duality principle \eqref{eq_duality} yields that
\begin{align*}
	&\bE\Bigl[\int^T_0\Bigl\langle\int^T_t\{\lambda(A+B\triangleright\Xi)(s,t)^\top\bar{\eta}(s,t)+\lambda(C+D\triangleright\Xi)(s,t)^\top\bar{\zeta}(s,t)\}\rd s,\varphi(t)\Bigr\rangle\rd t\Bigr]\\
	&=\bE\Bigl[\int^T_{t_0}\Bigl\{\int^T_t\langle\bar{\eta}(s,t),b(s,t)\rangle\rd s+\int^T_t\langle\bar{\zeta}(s,t),\sigma(s,t)\rangle\rd s\Bigr\}\rd t\Bigr]\\
	&=\bE\Bigl[\int^T_0\Bigl\{\langle\bar{\psi}(t),X_2(t)\rangle+\int^T_t\langle\bar{\chi}(s,t),\Theta_2(s,t)\rangle\rd s\Bigr\}\rd t\Bigr].
\end{align*}
Noting that $\|b\|_{L^{2,1}_\bF(\triangle_2(0,T))}+\|\sigma\|_{L^2_\bF(\triangle_2(0,T))}\leq K\|\varphi\|_{L^2_\bF(0,T)}$, by the same arguments as above, we obtain
\begin{align*}
	\|\lambda F_1(\bar{\eta},\bar{\zeta})\|_{L^2_\bF(0,T)}&=\bE\Bigl[\int^T_0\Bigl|\int^T_t\{\lambda(A+B\triangleright\Xi)(s,t)^\top\bar{\eta}(s,t)+\lambda(C+D\triangleright\Xi)(s,t)^\top\bar{\zeta}(s,t)\}\rd s\Bigr|^2\rd t\Bigr]^{1/2}\\
	&\leq K\|(\bar{\psi},\bar{\chi})\|_\cY.
\end{align*}
From the above estimates, we have
\begin{equation*}
	\|\lambda F(\bar{\eta},\bar{\zeta})\|_\cY\leq K\|(\bar{\psi},\bar{\chi})\|_\cY,
\end{equation*}
and hence the desired a priori estimate holds:
\begin{equation*}
	\|(\bar{\eta},\bar{\zeta})\|_\cX=\|\cT(\lambda F(\bar{\eta},\bar{\zeta})+(\bar{\psi},\bar{\chi}))\|_\cX\leq K\|(\bar{\psi},\bar{\chi})\|_\cY.
\end{equation*}

By \cref{lemm_method_of_continuation}, for any $(\psi,\chi)\in\cY=L^2_\bF(0,T;\bR^d)\times L^{2,1}_\bF(\triangle_2(0,T);\bR^d)$, there exists a unique adapted solution $(\eta,\zeta)\in\cX=L^2_{\bF,\mathrm{c}}(\triangle_2(0,T);\bR^d)\times L^2_\bF(\triangle_2(0,T);\bR^d)$ to the Type-II EBSVIE \eqref{eq_Type-II_EBSVIE}. This completes the proof.
\end{proof}


\subsection{Proof of \cref{theo_Lyapunov--Volterra}: Lyapunov--Volterra equations}\label{appendix_Lyapunov--Volterra}

We prove \cref{theo_Lyapunov--Volterra}. The representation formula \eqref{eq_quadratic_representation} follows from \cref{cor_Ito_feedback}. The idea of the proof of the well-posedness of the Lyapunov--Volterra equation \eqref{eq_Lyapunov--Volterra} is similar to the one of \cref{theo_Type-II_EBSVIE}.


\begin{proof}[Proof of \cref{theo_Lyapunov--Volterra}]
\underline{The representation formula \eqref{eq_quadratic_representation}:} By \cref{cor_Ito_feedback}, it is trivial that if $P=(P^{(1)},P^{(2)})\in\Pi(0,T)$ satisfies the Lyapunov--Volterra equation \eqref{eq_Lyapunov--Volterra}, then the representation formula \eqref{eq_quadratic_representation} holds.

\underline{Existence and uniqueness of the solution:} Let $\cX:=\Pi(0,T)$ and
\begin{equation*}
	\cY:=L^\infty(0,T;\bS^d)\times L^2(\triangle_2(0,T);\bR^{d\times d})\times L^{2,2,1}_\sym(\square_3(0,T);\bR^{d\times d}).
\end{equation*}
Define $\cT:\cY\to\cX$ by $\cT(Q):=(\cT_1(Q),\cT_2(Q))$ with
\begin{align*}
	&\cT_1(Q)(t):=Q^{(1)}(t),\ t\in(0,T),\\
	&\cT_2(Q)(s_1,s_2,t):=Q^{(2)}(s_1,s_2)\1_{\triangle_2(0,T)}(s_1,s_2)+Q^{(2)}(s_2,s_1)^\top\1_{\triangle_2(0,T)}(s_2,s_1)-\int^{s_1\wedge s_2}_tQ^{(3)}(s_1,s_2,\tau)\rd\tau,\\
	&\hspace{6cm}(s_1,s_2,t)\in\square_3(0,T),
\end{align*}
for $Q=(Q^{(1)},Q^{(2)},Q^{(3)})\in\cY$. Clearly, $\cT:\cY\to\cX$ is a bounded linear operator. Also, define $F:\cX\to\cY$ by
\begin{align*}
	F(P):=(F^{(1)}[\Xi;P],F^{(2)}[\Xi,\Gamma;P],F^{(3)}[\Gamma;P])
\end{align*}
for $P=(P^{(1)},P^{(2)})\in\cX=\Pi(0,T)$. By \cref{lemm_lint-rint_integrability}, $F:\cX\to\cY$ is a bounded linear operator. For each $Q=(Q^{(1)},Q^{(2)},Q^{(3)})\in\cY$, $P=(P^{(1)},P^{(2)})\in\cX$ is a solution to the Lyapunov--Volterra equation \eqref{eq_Lyapunov--Volterra} if and only if the following equation holds:
\begin{equation*}
	P=\cT(F(P)+Q).
\end{equation*}
In order to apply \cref{lemm_method_of_continuation}, we have to show the a priori estimate.

\underline{A priori estimate of the solution:} Let $\lambda\in[0,1]$ be fixed. For each $Q_i=(Q^{(1)}_i,Q^{(2)}_i,Q^{(3)}_i)\in\cY$ with $i=1,2$, assume that $P_i=(P^{(1)}_i,P^{(2)}_i)\in\cX$ satisfy the equations
\begin{equation*}
	P_i=\cT(\lambda F(P_i)+Q_i),\ i=1,2.
\end{equation*}
Define $\bar{P}:=(\bar{P}^{(1)},\bar{P}^{(2)}):=(P^{(1)}_1-P^{(1)}_2,P^{(2)}_1-P^{(2)}_2)\in\cX$ and $\bar{Q}:=(\bar{Q}^{(1)},\bar{Q}^{(2)},\bar{Q}^{(3)}):=(Q^{(1)}_1-Q^{(1)}_2,Q^{(2)}_1-Q^{(2)}_2,Q^{(3)}_1-Q^{(3)}_2)\in\cY$. We will show the estimate
\begin{equation*}
	\|\bar{P}\|_\cX\leq K\|\bar{Q}\|_\cY.
\end{equation*}
Here and the rest of this proof, $K>0$ denotes a universal constant which does not depend on $\lambda$ or $Q_i$. Observe that $\bar{P}=(\bar{P}^{(1)},\bar{P}^{(2)})$ is a solution to the Lyapunov--Volterra equation \eqref{eq_Lyapunov--Volterra} with the coefficients $A,B,C,D$ replaced by $\lambda A,\lambda B,\sqrt{\lambda}C,\sqrt{\lambda}D$ and $Q=(Q^{(1)},Q^{(2)},Q^{(3)})$ replaced by $\bar{Q}=(\bar{Q}^{(1)},\bar{Q}^{(2)},\bar{Q}^{(3)})$. For each $t_0\in[0,T)$, define $\bar{\cP}^{t_0}:L^2(t_0,T;\bR^d)\to L^2(t_0,T;\bR^d)$ by
\begin{equation*}
	\bar{\cP}^{t_0}:=\bar{P}^{(1)}(t)x(t)+\int^T_{t_0}\bar{P}^{(2)}(t, r,t_0)x( r)\rd r,\ t\in(t_0,T),
\end{equation*}
for $x\in L^2(t_0,T;\bR^d)$. By \cref{lemm_self-adjoint}, $\bar{\cP}^{t_0}$ is a self-adjoint bounded linear operator on the Hilbert space $L^2(t_0,T;\bR^d)$. Now let $(X,\Theta)\in L^2_\bF(t_0,T;\bR^d)\times L^2_{\bF,\mathrm{c}}(\triangle_2(t_0,T);\bR^d)$ be the causal feedback solution to the homogeneous controlled SVIE \eqref{eq_state_0} with the coefficients $(\lambda A,\lambda B,\sqrt{\lambda}C,\sqrt{\lambda}D)$, a given input condition $(t_0,x)\in\cI$, and the causal feedback strategy $(\Xi,\Gamma,0)\in\cS(0,T)$. By the representation formula \eqref{eq_quadratic_representation}, we have
\begin{align*}
	\langle\bar{\cP}^{t_0}x,x\rangle_{L^2(t_0,T)}&=\int^T_{t_0}\langle \bar{P}^{(1)}(t)x(t),x(t)\rangle\rd t+\int^T_{t_0}\!\!\int^T_{t_0}\langle \bar{P}^{(2)}(s_1,s_2,t_0)x(s_2),x(s_1)\rangle\rd s_1\!\rd s_2\\
	&=\bE\Bigl[\int^T_{t_0}\Bigl\{\langle \bar{Q}^{(1)}(t)X(t),X(t)\rangle+2\int^T_t\langle \bar{Q}^{(2)}(s,t)X(t),\Theta(s,t)\rangle\rd s\\
	&\hspace{4cm}+\int^T_t\!\!\int^T_t\langle \bar{Q}^{(3)}(s_1,s_2,t)\Theta(s_2,t),\Theta(s_1,t)\rangle\rd s_1\!\rd s_2\Bigr\}\rd t\Bigr].
\end{align*}
By \cref{theo_SVIE}, noting \cref{rem_universal_constant}, the right hand side of the above equality is estimated as
\begin{align*}
	&\Bigl|\bE\Bigl[\int^T_{t_0}\Bigl\{\langle \bar{Q}^{(1)}(t)X(t),X(t)\rangle+2\int^T_t\langle \bar{Q}^{(2)}(s,t)X(t),\Theta(s,t)\rangle\rd s\\
	&\hspace{4cm}+\int^T_t\!\!\int^T_t\langle \bar{Q}^{(3)}(s_1,s_2,t)\Theta(s_2,t),\Theta(s_1,t)\rangle\rd s_1\!\rd s_2\Bigr\}\rd t\Bigr]\Bigr|\\
	&\leq\|\bar{Q}^{(1)}\|_{L^\infty(0,T)}\|X\|^2_{L^2_\bF(t_0,T)}+2\|\bar{Q}^{(2)}\|_{L^2(\triangle_2(0,T))}\|X\|_{L^2_\bF(t_0,T)}\|\Theta\|_{L^2_{\bF,\mathrm{c}}(\triangle_2(t_0,T))}\\
	&\hspace{4cm}+\|\bar{Q}^{(3)}\|_{L^{2,2,1}_\sym(\square_3(0,T))}\|\Theta\|^2_{L^2_{\bF,\mathrm{c}}(\triangle_2(t_0,T))}\\
	&\leq K\|\bar{Q}\|_\cY\|x\|^2_{L^2(t_0,T)}.
\end{align*}
Therefore, the operator norm $\|\bar{\cP}^{t_0}\|_\op$ of $\bar{\cP}^{t_0}$ is estimated as
\begin{equation*}
	\|\bar{\cP}^{t_0}\|_\op=\sup_{\substack{x\in L^2(t_0,T;\bR^d)\\\|x\|_{L^2(t_0,T)}\leq 1}}|\langle\bar{\cP}^{t_0}x,x\rangle_{L^2(t_0,T)}|\leq K\|\bar{Q}\|_\cY.
\end{equation*}
We note that, for any $M=(M_1,\dots,M_{d_1})\in L^2(\triangle_2(0,T);\bR^{d\times d})$, $\bar{P}\rint M\in L^2(\triangle_2(0,T);\bR^{d\times d})$ is written by
\begin{equation*}
	(\bar{P}\rint M)(s,t)=((\bar{\cP}^tM_1(\cdot,t))(s),\dots,(\bar{\cP}^tM_d(\cdot,t))(s)),\ (s,t)\in\triangle_2(0,T),
\end{equation*}
and thus
\begin{equation*}
	\|\bar{P}\rint M\|_{L^2(\triangle_2(0,T))}\leq K\|\bar{Q}\|_\cY\|M\|_{L^2(\triangle_2(0,T))}.
\end{equation*}
Similarly, for any $M=(M_1,\dots,M_d)\in \sL^2(\triangle_2(0,T);\bR^{d\times d})$ and $N=(N_1,\dots,N_d)\in \sL^2(\triangle_2(0,T);\bR^{d\times d})$, $M^\top\lint\bar{P}\rint N\in L^\infty(0,T;\bR^{d\times d})$ is written by
\begin{equation*}
	(M^\top\lint \bar{P}\rint N)(t)=(\langle M_k(\cdot,t),\bar{\cP}^t N_\ell(\cdot,t)\rangle_{L^2(t,T)})_{k,\ell},\ t\in(0,T),
\end{equation*}
and thus
\begin{equation*}
	\|M^\top\lint \bar{P}\rint N\|_{L^\infty(0,T)}\leq K\|\bar{Q}\|_\cY\|M\|_{\sL^2(\triangle_2(0,T))}\|N\|_{\sL^2(\triangle_2(0,T))}.
\end{equation*}
Noting the definition \eqref{eq_Lyapunov--Volterra_coefficients} of the coefficients $(F^{(1)},F^{(2)},F^{(3)})$, we see that
\begin{equation*}
	\|F(\bar{P})\|_\cY=\|(F^{(1)}[\Xi;\bar{P}],F^{(2)}[\Xi,\Gamma;\bar{P}],F^{(3)}[\Gamma;\bar{P}])\|_\cY\leq K\|\bar{Q}\|_\cY,
\end{equation*}
and thus the desired a priori estimate holds:
\begin{equation*}
	\|\bar{P}\|_\cX=\|\cT(\lambda F(\bar{P})+\bar{Q})\|_\cX\leq K\|\bar{Q}\|_\cY.
\end{equation*}

By \cref{lemm_method_of_continuation}, for any $Q=(Q^{(1)},Q^{(2)},Q^{(3)})\in\cY=L^\infty(0,T;\bS^d)\times L^2(\triangle_2(0,T);\bR^{d\times d})\times L^{2,2,1}_\sym(\square_3(0,T);\bR^{d\times d})$, there exists a unique solution $P=(P^{(1)},P^{(2)})\in\cX=\Pi(0,T)$ to the Lyapunov--Volterra equation \eqref{eq_Lyapunov--Volterra}. This completes the proof.
\end{proof}


\end{document}